\newtheorem{introthm}{Theorem}
\newtheorem{theorem}{Theorem}[section]
\newtheorem{lemma}[theorem]{Lemma}
\newtheorem{corollary}[theorem]{Corollary}
\newtheorem{proposition}[theorem]{Proposition}
\newtheorem{conjecture}[theorem]{Conjecture}
\newcommand{\Z}{\mathbb{Z}}
\newcommand{\Q}{\mathbb{Q}}
\newcommand{\Field}{\mathbb{F}}
\newcommand{\betti}[2]{\beta_{#2}^{#1}}
\newcommand{\abetti}[2]{\alpha_{#2}^{#1}}
\newcommand{\tensor}{\otimes}
\newcommand{\join}{*}
\newcommand{\bigdirsum}{\bigoplus}
\newcommand{\sgn}{{\rm sgn}}
\newcommand{\brackom}[2]{\genfrac{[}{]}{0pt}{}{#1}{#2}}
\newcommand{\M}[1][{}]{\mathsf{M}_{#1}}
\newcommand{\Symm}[1]{\mathfrak{S}_{#1}}
\newcommand{\zivaljevic}{$\check{\mathrm{Z}}$ivaljevi\'c}
\newcommand{\shareshian}{Shareshian}
\begin{document}

 \title{Exact Sequences for the Homology of the Matching
   Complex}
 \thanks{This research was financed by a
   grant sponsored by Professor G\"unter M. Ziegler
   via his ``F\"orderpreis f\"ur deutsche Wissenschaftler im
   Gottfried Wilhelm Leibniz-Programm der Deutschen
   Forschungsgemeinschaft.''}
  \author{Jakob Jonsson}
   \email{jakobj@math.kth.se}
   \address{Department of Mathematics, KTH, 10044 Stockholm, Sweden}
  \date{}

  \begin{abstract}
    Building on work by Bouc and by {\shareshian} and Wachs, we
    provide a toolbox of long exact 
    sequences for the reduced simplicial homology of the matching
    complex $\M[n]$, which is the simplicial complex of matchings in
    the complete graph $K_n$. 
    Combining these sequences in
    different ways, we prove several results about the
    $3$-torsion part of the homology of $\M[n]$. 
    First, we demonstrate that there is nonvanishing
    $3$-torsion in $\tilde{H}_d(\M[n];\Z)$ whenever $\nu_n
    \le d \le \lfloor\frac{n-6}{2}\rfloor$, where $\nu_n= \lceil
    \frac{n-4}{3}\rceil$. 
    By results due to Bouc and to {\shareshian} and Wachs,
    $\tilde{H}_{\nu_n}(\M[n];\Z)$ is a nontrivial elementary
    $3$-group for almost all $n$ and the bottom nonvanishing homology 
    group of $\M[n]$ for all $n \neq 2$.
    Second, we prove that 
    $\tilde{H}_d(\M[n];\Z)$ is a nontrivial $3$-group whenever
    $\nu_n \le d \le \lfloor \frac{2n-9}{5}\rfloor$. 
    Third, for each $k \ge 0$, we show that there is a polynomial
    $f_k(r)$ of degree $3k$ such that the dimension of
    $\tilde{H}_{k-1+r}(\M[2k+1+3r];\Z_3)$, viewed as a vector space
    over $\Z_3$, is at most $f_k(r)$ for all $r \ge k+2$.
  \end{abstract}

  \maketitle

  \noindent
  This is a preprint version of a paper published in 
  {\em Journal of Combinatorial Theory, Series A}  {\bf 115} (2008),
  1504-1526.

  \section{Introduction}

  Given a family $\Delta$ of graphs on a fixed vertex set, we 
  identify each member of $\Delta$ with its edge set. In particular,
  if $\Delta$ is closed under deletion of edges, then 
  $\Delta$ is an abstract simplicial complex.

  A {\em matching} in a simple graph $G$ is a subset $\sigma$ of the
  edge set of $G$ such that no vertex appears in more than one edge in
  $\sigma$. Let $\M(G)$ be the family of matchings in $G$; $\M(G)$ is
  a simplicial complex. We write $\M[n] = \M(K_n)$, where $K_n$ is the
  complete graph on the vertex set 
  $[n] = \{1, \ldots, n\}$.

  The topology of $\M[n]$ and related complexes has been subject to
  analysis
  in a number of theses
  \cite{Andersen,Dong,Garst,thesis,thesislmn,Kara,Kson}
  and papers 
  \cite{Ath,BBLSW,BLVZ,Bouc,DongWachs,FH,KRW,RR,ShWa,Ziegvert}; see
  Wachs \cite{Wachs} for an excellent survey and further references.

  Despite the simplicity of the definition, the homology of the
  matching complex $\M[n]$
  turns out to have a complicated structure. The rational homology 
  is well-understood and easy to describe thanks to a beautiful result
  due to Bouc \cite{Bouc}, but very
  little is known about the integral homology and the homology over
  finite fields. 

  Over the integers, the bottom nonvanishing reduced homology group of
  $\M[n]$ is known to appear in degree $\nu_n =
  \lceil\frac{n-4}{3}\rceil$
  and is an elementary $3$-group for almost all $n$. 
  For $n \equiv 1 \pmod{3}$, this result is due to Bouc \cite{Bouc},
  who proved that $\tilde{H}_{r-1}(\M[3r+1];\Z) \cong \Z_3$
  for $r \ge 2$; see Section~\ref{newboucproof-sec}. {\shareshian} and
  Wachs \cite{ShWa} settled the 
  general case, proving that $\tilde{H}_{\nu_n}(\M[n];\Z) \cong
  (\Z_3)^{e_n}$ for some $e_n \ge 1$ whenever 
  $n \ge 15$ or $n \in \{7,10,12,13\}$;
  see Section~\ref{generalcase-sec}. Regarding the exact 
  value of $e_n$ when $n \not\equiv 1 \pmod{3}$, the best previously
  known upper bound is superexponential in $n$ \cite{ShWa}. In 
  Section~\ref{boundsmatch-sec}, we improve on this
  bound as follows:  
  \begin{introthm}
    We have that $e_{3r+3}$ 
    is bounded by a polynomial in $r$ of degree three
    and that $e_{3r+5}$ is bounded
    by a polynomial in $r$
    of degree six.
    More generally, for every $k \ge 0$, the dimension of the 
    $\Z_3$-vector space
    $\tilde{H}_{k-1+r}(\M[2k+1+3r];\Z_3)$ is bounded by a polynomial
    in $r$ of degree $3k$.
    \label{boundsintro-thm}
  \end{introthm}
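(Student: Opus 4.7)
The plan is to prove the general statement by induction on $k$. Writing $a_d(n) := \dim_{\Z_3} \tilde{H}_d(\M[n]; \Z_3)$ and $A_k(r) := a_{k-1+r}(2k+1+3r)$, the goal is a polynomial $f_k$ of degree $3k$ with $A_k(r) \le f_k(r)$ for all $r \ge k+2$. The base case $k=0$ reduces to Bouc's theorem as recalled in Section~\ref{newboucproof-sec}, which gives $\tilde{H}_{r-1}(\M[3r+1];\Z_3) \cong \Z_3$, so $A_0(r) = 1$ and one can take $f_0 \equiv 1$.

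The main tool is the long exact sequence in $\Z_3$-homology for the pair $(\M[n], \M[n-1])$, where $\M[n-1]$ embeds as the subcomplex of matchings avoiding vertex~$n$. Since the relative chain complex splits as a wedge over the $n-1$ possible partners of~$n$, with each summand a cone on $\M[n-2]$, this LES yields the numerical estimate
\[
a_d(n) \le a_d(n-1) + (n-1)\, a_{d-1}(n-2). \qquad (\ast)
\]
Applying $(\ast)$ at $(n,d)=(2k+1+3r,\,k-1+r)$ gives $A_k(r) \le B_k(r) + (2k+3r)\,A_{k-1}(r)$, where $B_k(r) := a_{k-1+r}(2k+3r)$ sits on the adjacent even-$n$ diagonal. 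The second summand is $O(r)\,f_{k-1}(r) = O(r^{3k-2})$ by the inductive hypothesis. To bound $B_k(r)$, I would iterate $(\ast)$ two more times, descending the pair $(n,d)$ by $(3,1)$ so as to return to the original odd-$n$ diagonal at the previous parameter $r-1$. This produces a telescoping recursion of the form
\[
A_k(r) \le A_k(r-1) + \Phi_k(r),
\]
where $\Phi_k(r)$ is a sum of polynomial-in-$r$ multiples of homology dimensions on the $(k-1)$ and $(k-2)$ diagonals, bounded by $O(r^{3k-1})$ via induction. Iterating down from some base value $r_0$ of order $k$ then yields a polynomial bound of degree $1 + (3k-1) = 3k$, as required.

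The main obstacle is the proliferation of ``shifted'' homology dimensions $a_{k-1+r+j}(\cdot)$ with $j \ge 1$ that arise while iterating $(\ast)$ to descend $B_k(r)$ back to the canonical diagonal: these do not directly fit the inductive hypothesis, since they lie on diagonals parallel to but distinct from the canonical one. I expect to resolve this by strengthening the hypothesis to a finite family $\{A_k^{(j)}(r) : 0 \le j \le k\}$ of shifted analogues and showing that the LES closes the recursion within this family, with the base cases for $j > 0$ controlled by combining Bouc's rational concentration result for $\tilde{H}_*(\M[n];\Q)$ with the universal coefficient theorem. Carefully tracking which shifts appear under each application of $(\ast)$ and reconciling the polynomial-degree accounting across the diagonals is the key technical challenge.
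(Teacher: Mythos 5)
Your base case ($k=0$ via Bouc's theorem) is right, and the overall strategy of inducting on $k$ via a recursion that telescopes in $r$ is indeed the shape of the paper's argument. However, the engine you are proposing — the long exact sequence for $(\M[n],\M[n-1])$, which yields the estimate $(\ast)$ $\betti{n}{d}\le\betti{n-1}{d}+(n-1)\betti{n-2}{d-1}$ — cannot produce the telescoping recursion you write down, and the obstruction is not the one you identify.

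Translate $(\ast)$ into the $(k,r)$ coordinates of~(\ref{nd2kr-eq}). The pair $(n-1,d)$ corresponds to $(k+1,r-1)$ and $(n-2,d-1)$ to $(k-1,r)$, so $(\ast)$ reads $\hat\beta_{k,r}\le\hat\beta_{k+1,r-1}+(2k+3r)\hat\beta_{k-1,r}$. Each application of $(\ast)$ thus trades a decrement of $r$ for an \emph{increment} of $k$ in the leading term. If you iterate three times starting from $\hat\beta_{k,r}$, the leading term is $\hat\beta_{k+3,r-3}$, not $\hat\beta_{k,r-1}$; the term $\hat\beta_{k,r-1}$ does appear, but only as a secondary summand carrying a coefficient of order $r$, and alongside it you pick up $\hat\beta_{k+1,r-2}$. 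None of the terms $\hat\beta_{k+1,r-2}$, $\hat\beta_{k+3,r-3}$, $\ldots$ is covered by induction on $k$, and there is no base case available at large $k$ and small $r$: for $r\le 1$ the groups are free of rank roughly $\binom{2k}{k}$, which grows exponentially. So iterating $(\ast)$ to push $r$ down lands you on an exponentially large base, and a ``finite family of shifted analogues'' does not close the recursion — the shift in $k$ is not bounded, it is of order $r$. (The 0-3-4 sequence has the same problem in a different guise: it \emph{does} relate $(k,r)$ to $(k,r-1)$ and $(k-2,r)$, but with a coefficient of order $n$ on the $(k,r-1)$ term, giving only the superexponential bound already known to {\shareshian} and Wachs.)

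The paper's proof circumvents this by introducing the auxiliary quantity $\abetti{n}{d}=\dim_{\Z_3}\tilde H_d(\M[n]\setminus e;\Z_3)$ and two exact sequences tailored to it: the 0-$e$-2 sequence (Theorem~\ref{exseq0e2-thm}) giving $\hat\beta_{k,r}\le\hat\alpha_{k,r}+\hat\beta_{k-1,r}$, and the new 0-2-3-5 sequence (Theorem~\ref{exseq0235-thm}) giving $\hat\alpha_{k,r}\le\hat\beta_{k,r-1}+2\binom{2k+3r-2}{2}\hat\beta_{k-1,r-1}+(2k+3r-2)\hat\alpha_{k-1,r}$. The decisive structural feature — absent from anything obtainable via the 0-1-2 sequence — is that the $\hat\beta_{k,r-1}$ term has coefficient exactly $1$, because in $\M[n]\setminus 12$ the relative chain complex over the subcomplex built from $\{1,2,3\}$ contributes a \emph{single} copy of $\tilde H_{d-1}(\M[n-3])$ rather than $\Theta(n)$ copies. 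Combining the two inequalities yields $\hat\beta_{k,r}-\hat\beta_{k,r-1}\le O(r^{3k-1})$ by induction on $k$, and summing over $r$ gives degree $3k$. Your proposal has the right induction scheme but is missing the auxiliary complex and, with it, the coefficient-one recursion that makes the telescoping actually work.
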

  To establish Theorem~\ref{boundsintro-thm}, we construct a new long
  exact sequence for
  the matching complex, relating the homology of $\M[n] \setminus e$
  to that of $\M[n-2] \setminus e$, $\M[n-3]$, and $\M[n-5]$, where
  $e$ is an edge and $\M[n] \setminus e$ is the complex obtained from
  $\M[n]$ by removing the $0$-cell corresponding to this edge. See
  Section~\ref{exseq0235-sec} for details.
  Combining this
  sequence with the long exact sequence for the pair 
  $(\M[n], \M[n] \setminus e)$ (see Section~\ref{exseq0e2-sec}) and
  using an induction argument, we derive bounds of the form 
  \[
  \hat{\beta}_{k,r} \le \hat{\beta}_{k,r-1} + C_k r^{3k-1}(1+O(1/r)),
  \]
  where $\hat{\beta}_{k,r} = \dim_{\Z_3}
  \tilde{H}_{k-1+r}(\M[2k+1+3r];\Z_3)$. Summing over $r$, we obtain
  the desired result.

  As it turns out, for any fixed $k \ge 0$ and for sufficiently large
  $r$, we have that $\tilde{H}_{k-1+r}(\M[2k+1+3r];\Z)$ is a
  nontrivial $3$-group. In fact, we prove the following result in
  Section~\ref{pnot3-sec}:
  \begin{introthm}
    $\tilde{H}_d(\M[n];\Z)$ is a nontrivial $3$-group whenever
    $\lceil\frac{n-4}{3}\rceil 
    \le d \le 
    \lfloor\frac{2n-9}{5}\rfloor$.
    \label{p5intro-thm}
  \end{introthm}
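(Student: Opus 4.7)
My plan is to separate the claim into three parts: nontriviality, absence of a free part, and absence of $p$-torsion for every prime $p\neq 3$. Nontriviality follows directly from the first introductory theorem, since $\lfloor(2n-9)/5\rfloor \le \lfloor(n-6)/2\rfloor$ holds for all $n \ge 12$, and the remaining small cases can be handled from the explicit homology computations of Bouc and \shareshian--Wachs. Absence of a free part, i.e.\ $\tilde H_d(\M[n];\Q) = 0$, is immediate from Bouc's description of the rational homology of $\M[n]$, whose support lies near the top dimension $\lfloor(n-3)/2\rfloor$, strictly above $\lfloor(2n-9)/5\rfloor$ for all relevant $n$. The real work is therefore to prove that $\tilde H_d(\M[n];\Z)$ has no $p$-torsion for $p \neq 3$; by universal coefficients it suffices to show $\tilde H_d(\M[n];\Z_p) = 0$ throughout the stated range.

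I would establish this by induction on $n$, splicing the two long exact sequences provided by the paper. First, from Section~\ref{exseq0e2-sec}, the long exact sequence of the pair $(\M[n], \M[n]\setminus e)$ reads
\[
\cdots \to \tilde H_d(\M[n]\setminus e;\Z_p) \to \tilde H_d(\M[n];\Z_p) \to \tilde H_{d-1}(\M[n-2];\Z_p) \to \cdots,
\]
since the link of $e$ in $\M[n]$ is $\M[n-2]$. Second, the new exact sequence from Section~\ref{exseq0235-sec} expresses $\tilde H_\ast(\M[n]\setminus e;\Z_p)$ in terms of $\tilde H_\ast$ of $\M[n-2]\setminus e$, $\M[n-3]$ and $\M[n-5]$ in suitably shifted degrees. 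Splicing the two sequences reduces the vanishing of $\tilde H_d(\M[n];\Z_p)$ to $\Z_p$-vanishing for $\M[m]$ and $\M[m]\setminus e$ with $m \in \{n-2, n-3, n-5\}$ in nearby degrees. The exact form of the bound $\lfloor(2n-9)/5\rfloor$ is calibrated precisely so that $d-2 \le \lfloor(2(n-5)-9)/5\rfloor$, which is exactly what the $\M[n-5]$ contribution in the new sequence requires; the $\M[n-3]$ and $\M[n-2]$ terms are then automatically within the inductive range.

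The main obstacle I anticipate is that the new exact sequence introduces the auxiliary complex $\M[n]\setminus e$, which is not the object of the theorem. I would therefore run a simultaneous induction proving $\Z_p$-vanishing for both $\M[n]$ and $\M[n]\setminus e$ in parallel inductive ranges, so that every occurrence of a $\M[\cdot]\setminus e$ term on the right side of the spliced sequence is absorbed. A second delicate point is anchoring the base cases: because $\nu_n = \lceil(n-4)/3\rceil$ fluctuates with $n \bmod 3$, the small-$n$ inputs must be verified case by case using the known calculations for $\M[n]$ together with direct computation for a handful of $\M[n]\setminus e$. Once these bookkeeping issues are handled, the inductive step itself reduces to diagram chasing on the two long exact sequences.
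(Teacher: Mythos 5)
Your overall strategy — splitting into nontriviality, vanishing over $\Q$, and vanishing over $\Z_p$ for $p\neq 3$, then attacking the third part by splicing long exact sequences in a simultaneous induction — is reasonable in outline, and the first two parts are handled correctly. However, the choice of sequences and the calibration claim contain a genuine gap. The paper does \emph{not} prove Theorem~\ref{p5intro-thm} via the 0-$e$-2 sequence and the 0-2-3-5 sequence (those are reserved for the upper bounds in Section~\ref{boundsmatch-sec}). It uses the 0-1-2, 0-3-4, and 0-3-5-6 sequences of Sections~\ref{exseq012-sec}, \ref{exseq034-sec}, and \ref{exseq0356-sec}, via Theorem~\ref{p5-thm}.

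The concrete problem with your calibration: in the 0-2-3-5 sequence, $\tilde H_d(\M[n]\setminus 12)$ sits next to $\langle 13\rangle\tensor\tilde H_{d-1}(\M[n-3])$ (together with the $\M[n-5]$ and $\M[n-2]\setminus e$ contributions). Your induction hypothesis gives vanishing of $\tilde H_{d-1}(\M[n-3];\Z_p)$ only when $5(d-1)\le 2(n-3)-9$, i.e.\ $5d\le 2n-10$. But the range you are trying to cover extends to $5d\le 2n-9$; at the boundary $5d=2n-9$ (which occurs precisely for $n\equiv 2\pmod 5$) the $\M[n-3]$ term lands at degree $\mu_{n-3}=\lceil\frac{2(n-3)-8}{5}\rceil$, exactly where the paper shows $\Z_p$-homology may be \emph{nonzero}. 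So the $\M[n-3]$ term is \emph{not} automatically within the inductive range, contrary to your claim, and the naive diagram chase does not close. (The $\M[n-5]$ and $\M[n-2]$ contributions are genuinely within range, as you note; it is only the $\M[n-3]$ term that misses.)

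What the paper does to overcome this is essential and is absent from your sketch: Theorem~\ref{p5-thm} not only proves vanishing for $5d<2n-8$ but also determines, at the boundary degree $\mu_n=\lceil\frac{2n-8}{5}\rceil$, an explicit set of generators described by ``types'' (e.g.\ $\brackom{5}{2}^q$, $\brackom{1}{0}\wedge\brackom{5}{2}^q$, etc., as defined in Section~\ref{basic-sec}). The induction proceeds through five residue classes of $n\bmod 5$, using different sequences in different cases, and the key extra fact — that $\tilde H_{2q-1}(\M[5q+2];\Z_p)=0$ — follows because the generators of $\tilde H_{2q-1}(\M[5q+1])$ have type $\brackom{1}{0}\wedge\brackom{5}{2}^q$, which pushes forward to type $\brackom{2}{0}\wedge\brackom{5}{2}^q$ in $\M[5q+2]$ and is hence a boundary. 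Without tracking generator types along the maps in the long exact sequences, an induction purely on vanishing cannot reach the stated bound $\lfloor(2n-9)/5\rfloor$.
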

  The groups being finite in the given interval is a consequence of
  Bouc's formula for the rational homology \cite{Bouc}; see
  Section~\ref{classic-sec}.
  To settle the nonexistence of $p$-torsion in $\tilde{H}_d(\M[n];\Z)$
  for $p \neq 3$, we use three long exact sequences. Bouc
  \cite{Bouc} introduced two of these sequences, one of which relates
  $\M[n]$ to $\M[n-1]$ and $\M[n-2]$ and the other 
  $\M[n]$ to $\M[n-3]$ and $\M[n-4]$; see Sections~\ref{exseq012-sec}
  and \ref{exseq034-sec}, respectively. The third sequence is new but
  based on the same idea and relates 
  $\M[n]$ to $\M[n-3]$, $\M[n-5]$, and $\M[n-6]$; see
  Section~\ref{exseq0356-sec}.

  These three sequences are all special cases of a more general
  construction involving a filtration of $\M[n]$ with respect to a
  given parameter $m \in [n]$: 
  \[
  \Delta^0_n \subseteq \Delta^1_n \subseteq \cdots \subseteq
  \Delta^{\min \{m,n-m\}}_n = \M[n].
  \]
  We obtain $\Delta^i_n$ from $\M[n]$ by removing all
  matchings containing at least $i+1$ edges $ab = \{a,b\}$ such that
  $a \in [m]$ and $b \in [m+1,n] = \{m+1, m+2, \ldots, n-1, n\}$. 
  It is a straightforward exercise to show that
  the relative homology of $(\Delta^i_n,\Delta^{i-1}_n)$ is isomorphic
  to a direct sum of homology groups of $\M[m-i] \join \M[n-m-i]$, 
  where $\join$ denotes simplicial join. For $m \in \{1,2\}$, the
  construction boils down to Bouc's two exact sequences, whereas the
  parameter choice $m=3$ yields our new exact sequence.
  For larger $m$, one would need more than one exact sequence to
  fully describe the correlations between the different matching
  complexes involved. 
  See
  Section~\ref{filtration-sec} for basic properties of the filtration.

  The group $\tilde{H}_d(\M[n];\Z)$ being nontrivial 
  when $d$ falls within the bounds of Theorem~\ref{p5intro-thm}
  is a consequence of the following result, which we prove in 
  Section~\ref{highermatch-sec}:
  \begin{introthm}
    For $n \ge 1$, there is nonvanishing $3$-torsion
    in $\tilde{H}_d(\M[n]; \Z)$
    whenever $\lceil\frac{n-4}{3}\rceil \le d \le
    \lfloor\frac{n-6}{2}\rfloor$. 
    In particular, 
    $\tilde{H}_d(\M[n]; \Z)$ is nonzero 
    if and only if
    $\lceil\frac{n-4}{3}\rceil \le d \le
    \lfloor\frac{n-3}{2}\rfloor$.
    \label{nonvan3intro-thm}
  \end{introthm}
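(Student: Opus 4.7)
The theorem has two parts: a 3-torsion claim in the range $\nu_n\le d\le\lfloor(n-6)/2\rfloor$, and the precise range of nonvanishing $\nu_n\le d\le\lfloor(n-3)/2\rfloor$. Once the 3-torsion claim is in hand, the range statement follows from standard inputs: the $(\nu_n{-}1)$-connectivity of $\M[n]$ (Bj\"orner--Lov\'asz--Vre\'cica--\zivaljevic, re-derived by Bouc \cite{Bouc}) rules out $d<\nu_n$; the dimension of $\M[n]$ when $n$ is odd, together with a supplementary top-degree vanishing when $n$ is even (derivable from Bouc's rational formula), rules out $d>\lfloor(n-3)/2\rfloor$; and Bouc's rational formula produces $\tilde{H}_d(\M[n];\Q)\ne 0$ in the topmost one or two degrees $\lfloor(n-5)/2\rfloor\le d\le\lfloor(n-3)/2\rfloor$ not covered by the 3-torsion statement.

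For the 3-torsion claim I would proceed by induction on $n$ using the step $(n-2,d-1)\Rightarrow(n,d)$; a short case check by $n\bmod 3$ shows $\nu_{n-2}\in\{\nu_n-1,\nu_n\}$, so $(n-2,d-1)$ lies inside the theorem's range whenever $d>\nu_n$. The base case $d=\nu_n$ is the theorem of Bouc and \shareshian--Wachs, giving $\tilde{H}_{\nu_n}(\M[n];\Z)\cong(\Z_3)^{e_n}$ with $e_n\ge 1$. For the inductive step I would invoke Bouc's first long exact sequence (the $m=1$ case of the filtration in Section~\ref{filtration-sec}),
\[
\cdots \to \tilde{H}_d(\M[n-1]) \to \tilde{H}_d(\M[n]) \to \tilde{H}_d(\M[n],\M[n-1]) \xrightarrow{\partial} \tilde{H}_{d-1}(\M[n-1]) \to \cdots,
\]
together with the join identification $\tilde{H}_d(\M[n],\M[n-1])\cong(n-1)\cdot\tilde{H}_{d-1}(\M[n-2])$ arising from the homotopy equivalence $\M[n]/\M[n-1]\simeq\bigvee_{j=2}^n\Sigma\M[n-2]$. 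By induction, each summand of the relative term carries 3-torsion, yielding 3-rank at least $n-1$, and any 3-torsion class in $\ker\partial$ gives a nontrivial 3-torsion class in $\tilde{H}_d(\M[n];\Z)$ by exactness.

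The main obstacle is to certify $\ker\partial\cap(\text{3-torsion})\ne 0$. I would address this by a dimension count combined with Bouc's rational formula: in the range in question $\tilde{H}_{d-1}(\M[n-1];\Q)$ vanishes, so $\tilde{H}_{d-1}(\M[n-1];\Z)$ is pure torsion, and its 3-rank must be bounded -- via an auxiliary induction running in parallel -- by a quantity strictly smaller than $n-1$. Comparing 3-ranks of source and target of $\partial$ then forces the kernel to contain 3-torsion. The delicate step is setting up this auxiliary bound, since a naive cell-counting estimate is too weak; the cleanest route is via the $\Symm{n-1}$-equivariance of the filtration together with the representation-theoretic constraints imposed by Bouc's rational formula, which pin down the composition-factor structure of the possible target representations. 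Small values of $n$ for which these estimates are too crude would be verified by direct computation, in the spirit of \shareshian--Wachs \cite{ShWa}.
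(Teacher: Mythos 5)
Your reduction of the ``nonzero iff'' statement to the $3$-torsion statement plus Bouc's rational formula, connectivity, and a collapsibility bound is sound and matches the paper's Corollary~\ref{torsionallover-cor}. The problem is the $3$-torsion claim itself. First, a factual error: in the range $\nu_n \le d \le \lfloor(n-6)/2\rfloor$, i.e.\ $k\ge 0$, $r\ge 3$ in the coordinates of (\ref{nd2kr-eq}), the group $\tilde{H}_{d-1}(\M[n-1];\Q)$ does \emph{not} always vanish; it corresponds to $(k',r')=(k-2,r+1)$ and by Theorem~\ref{bouc-thm} is nonzero whenever $k-2\ge\binom{r+1}{2}$ (e.g.\ $n=30$, $d=12$). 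So the target of $\partial$ need not be pure torsion. Second, and more fundamentally, the ``auxiliary bound'' you need -- that the $3$-rank of $\tilde{H}_{d-1}(\M[n-1];\Z)$ is strictly smaller than $(n-1)$ times the $3$-rank of $\tilde{H}_{d-1}(\M[n-2];\Z)$ -- is not merely delicate: it is essentially as hard as the theorem itself, and the paper establishes no bound of this kind (the polynomial bounds of Section~\ref{boundsmatch-sec} come \emph{after} the torsion result, go in the opposite direction, and are not uniform in $n$ for fixed $d$). Moreover, even with such a rank bound, producing a $3$-torsion element of $\ker\partial$ only yields a class $y\in\tilde{H}_d(\M[n];\Z)$ with $3y$ in the image of $\tilde{H}_d(\M[n-1];\Z)$, not a torsion class outright, so the exactness argument needs further care. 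Your proposal defers the whole difficulty to an unspecified equivariant estimate, which is a genuine gap.

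The paper's route (Theorems~\ref{torallovergen-thm} and~\ref{torsionallover-thm}) avoids these problems by not working inside the tower $\M[n]$, $\M[n-1]$, \dots at all. Instead it fixes a cycle $\gamma$ of exponent $3$ in some $\tilde{H}_{d-1}(\M[n];\Z)$ (e.g.\ $\gamma_{3r}$) and builds a map $\theta_k(z)=z\wedge\gamma^{(2k+1)}$ from the \emph{top} homology $\tilde{H}_{k-1}(\M(G_k);\Z)$ of an auxiliary family of graphs $G_k$ on $2k+1$ vertices. The induction on $k$ is run on $G_k$, not on $\M[n]$, and the key point -- invisible in your approach -- is that the groups $\tilde{H}_{k-1}(\M(G_k);\Z)$ and $\bigoplus_s \tilde{H}_{k-2}(\M(G_k\setminus\{1,s\});\Z)$ are torsion-free (top homology of a complex of dimension $\le k-1$, respectively a collapse to dimension $\le k-2$ via Lemma~\ref{evencoll-lem}), so the vertex-deletion map $\hat\omega$ remains injective after tensoring with $\Z_3$ for free, with no rank estimate needed. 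A commutative square with the $0$-$1$-$2$ map $\omega$ then pushes this injectivity forward, and the exponent-$3$ condition on $\gamma$ makes $\hat\theta_k$ well-defined with image consisting of $3$-torsion. If you want to salvage your approach, the missing idea to import is precisely this: replace the full complex $\M[n-1]$ by a smaller complex whose relevant homology is \emph{free}, so that injectivity survives $\tensor\,\Z_3$ without any counting.
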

  To prove the first statement in
  Theorem~\ref{nonvan3intro-thm}, we only need Bouc's original two
  sequences and the results of Bouc and of {\shareshian} and Wachs
  about the bottom nonvanishing homology. The second statement is a
  consequence of the first statement and Bouc's formula for the
  rational homology of $\M[n]$.

  In Section~\ref{newboucproof-sec}, we find another application of
  the new long exact sequence introduced in Section~\ref{exseq0356-sec}
  as we present a new proof of Bouc's result that
  $\tilde{H}_{r-1}(\M[3r+1];\Z) \cong \Z_3$ for $r \ge 2$.

  So far, all our results have been about the existence of $3$-torsion
  and the nonexistence of other torsion. Almost nothing is known about
  $p$-torsion when $p \neq 3$, but in a previous paper \cite{m14}, the
  author used a result due to Andersen \cite{Andersen}
  to prove that
    $\tilde{H}_{4}(\M[14]; \Z)$ is a finite nontrivial group of
    exponent a multiple of $15$.
  We have not been able to detect $5$-torsion in any other homology group
  $\tilde{H}_d(\M[n];\Z)$, but in Section~\ref{further5-sec}, we show
  that the case $5d = 2n-8$ is crucial for the general behavior:
  \begin{introthm}
    For $q \ge 3$,
    if $\tilde{H}_{2q}(\M[5q+4];\Z)$ contains nonvanishing 
    $5$-torsion, then so does
    $\tilde{H}_{2q+u}(\M[5q+4+2u];\Z)$ for each
    $u \ge 0$. In particular, if 
    $\tilde{H}_{2q}(\M[5q+4];\Z)$ contains nonvanishing 
    $5$-torsion for each $q \ge 3$,
    then so does $\tilde{H}_d(\M[n];\Z)$ whenever
    $\lceil \frac{2n-8}{5}\rceil \le 
    d \le \lfloor \frac{n-7}{2}\rfloor$.
    \label{further5intro-thm}
  \end{introthm}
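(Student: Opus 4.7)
I would prove the implication by induction on $u \ge 0$, with the base case $u = 0$ given. For the inductive step, I apply Bouc's first long exact sequence (Section~\ref{exseq012-sec}), localized at the prime $5$, at $n = 5q+6+2u$, $d = 2q+u+1$. The relevant fragment is
\begin{equation*}
\tilde{H}_d(\M[n])_{(5)} \xrightarrow{\partial} (n-1)\tilde{H}_{d-1}(\M[n-2])_{(5)} \xrightarrow{f} \tilde{H}_{d-1}(\M[n-1])_{(5)}.
\end{equation*}
By the inductive hypothesis, together with the vanishing of rational homology in this range (Bouc's formula, Section~\ref{classic-sec}), $\tilde{H}_{d-1}(\M[n-2])_{(5)}$ is the nonzero $5$-primary part of $\tilde{H}_{2q+u}(\M[5q+4+2u];\Z)$. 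If the obstruction $\tilde{H}_{d-1}(\M[n-1])_{(5)} = \tilde{H}_{2q+u}(\M[5q+5+2u])_{(5)}$ vanishes, then $f$ kills the $5$-part of its source, $\partial$ surjects onto a nonzero group, and $\tilde{H}_d(\M[n];\Z)$ acquires $5$-torsion as required. This vanishing is immediate from Theorem~\ref{p5intro-thm} when $(5q+5+2u,2q+u)$ lies in its $3$-group range, which a direct check on the upper bound $\lfloor(2n-9)/5\rfloor$ shows occurs precisely for $u \in \{0,1\}$.

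To reach $u \ge 2$, I would run the analogous argument using Bouc's second sequence (Section~\ref{exseq034-sec}, shift $(+4,+2)$) and the new $m{=}3$ sequence (Section~\ref{exseq0356-sec}, shift $(+6,+3)$), together with the edge-removal sequences of Sections~\ref{exseq0e2-sec}--\ref{exseq0235-sec}. By combining these in a strong induction on $u$, choosing at each step whichever sequence has an off-diagonal obstruction lying in Theorem~\ref{p5intro-thm}'s range, we cover all $u \ge 0$; in particular the base cases $u \in \{1,2,3\}$ can be reached directly from $u=0$ via the three sequences, after which the jumps of size $2$ and $3$ propagate further.

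\noindent\textbf{Main obstacle.} The delicate part is orchestrating the strong induction so that at every $u \ge 2$ at least one of the available long exact sequences has an off-diagonal $5$-primary obstruction in the $3$-group range of Theorem~\ref{p5intro-thm}. The $m{=}3$ sequence appears decisive here because its relative terms live on two neighbouring diagonals ($n-2d = q+3$ and $n-2d = q+4$) rather than one, providing the extra flexibility not supplied by the $m{=}1$ and $m{=}2$ sequences alone; the edge-removal sequences may also be needed to control the $5$-primary part of $\tilde{H}_*(\M[n] \setminus e)$ when the standard obstructions leave Theorem~\ref{p5intro-thm}'s range.

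The ``in particular'' claim is immediate from the reparametrization $q := n-2d-4$, $u := 5d-2n+8$: the bounds $\lceil(2n-8)/5\rceil \le d \le \lfloor(n-7)/2\rfloor$ translate exactly into $q \ge 3$ and $u \ge 0$ with $(n,d) = (5q+4+2u, 2q+u)$, so the main implication (ranging over $q \ge 3$) produces $5$-torsion in $\tilde{H}_d(\M[n];\Z)$ throughout the claimed region.
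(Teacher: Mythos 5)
Your plan correctly handles the base cases $u\in\{0,1,2,3\}$: the off-diagonal obstruction for the 0-1-2 step from $u$ to $u+1$ lives at $(n',d')=(5q+5+2u,2q+u)$, and $d'\le\lfloor(2n'-9)/5\rfloor$ reduces to $u\le\lfloor(1+4u)/5\rfloor$, which holds exactly for $u\le 1$; and you can reach $u=2,3$ from $u=0,1$ via the 0-3-4 sequence. But the crucial claim ``after which the jumps of size 2 and 3 propagate further'' is false, and this is where the proposal breaks. Try to reach $u=4$ (target $(5q+12,2q+4)$): the 0-1-2 step from $u=3$ needs $\tilde{H}_{2q+3}(\M[5q+11])_{(5)}=0$, but $\lfloor(2(5q+11)-9)/5\rfloor=2q+2<2q+3$; the 0-3-4 step from $u=2$ needs $\tilde{H}_{2q+2}(\M[5q+9])_{(5)}=0$, but $\lfloor(2(5q+9)-9)/5\rfloor=2q+1<2q+2$; the 0-3-5-6 step from $u=1$ needs both $\tilde{H}_{2q+1}(\M[5q+7])_{(5)}=0$ (OK) and $\tilde{H}_{2q+2}(\M[5q+9])_{(5)}=0$ (fails as just computed). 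So \emph{none} of the available long exact sequences has its full off-diagonal obstruction inside the $3$-group range of Theorem~\ref{p5intro-thm}. The edge-removal sequences do not rescue this, since they trade the problem for controlling the $5$-primary part of $\tilde{H}_*(\M[n]\setminus e)$, for which you have no input. You yourself flag this as the ``main obstacle,'' but then assert it can be overcome by orchestration; in fact the obstruction terms drift monotonically out of the safe region as $u$ grows, so no chaining of these sequences can recover $u\ge 4$.

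The paper's proof avoids the diagonal-chaining problem entirely. It uses Theorem~\ref{p5-thm} to pick a generator $\gamma$ of $\tilde{H}_{2q}(\M[5q+4];\Z)_{(5)}$ of type $\brackom{1}{0}\wedge\brackom{3}{1}\wedge\brackom{5}{2}^q$, then factors $\gamma=\gamma_5\wedge\gamma'$ with $\gamma_5$ of type $\brackom{5}{2}$. After a lemma showing one may take $\gamma_5$ to be (up to a factor of $2$, harmless mod $5$) the fundamental cycle of $\M(K_{3,2})$, it invokes the compatible-family machinery of Theorem~\ref{torallovergen-thm} with $\mathcal{G}=\{K_{k+1,k}\}$: the map $z\mapsto z\wedge(\gamma')^{(2u)}$ is a monomorphism from $\tilde{H}_{1+u}(\M(K_{3+u,2+u});\Z)\tensor\Z_5$ into $\tilde{H}_{2q+u}(\M[5q+4+2u];\Z)\tensor\Z_5$ for all $u\ge 0$, and $\gamma'$ having exponent a multiple of $5$ gives genuine $5$-torsion in the target. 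This ``grow the $\brackom{5}{2}$ factor while the rest of the wedge stays fixed'' argument is a different mechanism from chaining exact sequences along the diagonal, and it is the ingredient your proposal lacks. Your reparametrization for the ``in particular'' clause is correct, but it rests on the first claim, which remains unproved in your approach.
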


  \begin{figure}[htb]
    \begin{center}
        \begin{tabular}{|r||c|c|c|c|c|c|}
          \hline
          \!$\tilde{H}_d(\M[n];\Z)$
          &  $d=0$ & \ 1 \ & \ 2 \ & \ 3 \ & \ 4 \ & \ 5 \
          \\
          \hline
          \hline
          $n = 3$ & $\Z^2$ & - & - & - & - & - \\
          \hline
          $4$ & $\Z^2$ & - & - & - & - & - \\
          \hline
          $5$ & - & $\Z^6$ & - & - & - & - \\
          \hline
          $6$ & - & $\Z^{16}$ & - & - & - & - \\
          \hline
          $7$ & - & $\Z_3$ & $\Z^{20}$ & - & - &  - \\
          \hline
          $8$  & - & - & $\Z^{132}$ & - & - & - \\
          \hline
          $9$  & - & - & $\Z_3^8 \oplus \Z^{42}$ &
          $\Z^{70}$  & - & - \\
          \hline
          $10$  & - & - & $\Z_3$ &
          $\Z^{1216}$  & - & - \\
          \hline
          $11$  & - & - & - & $\Z_3^{45} \oplus
          \Z^{1188}$ & $\Z^{252}$ & -  \\
          \hline
          $12$  & - & - & - & $\Z_3^{56}$  &
          $\Z^{12440}$& - \\
          \hline
          $13$  & - & - & - & $\Z_3$  &
          $T_1 \oplus \Z^{24596}$& $\Z^{924}$ \\
          \hline
          $14$  & - & - & - & -  &
          $T_2$ &  $\Z^{138048}$ \\
          \hline
        \end{tabular}
    \end{center}
    \caption{The homology of $\M[n]$ for $n \le 14$.
      $T_1$ and $T_2$ are
      nontrivial finite groups of exponent a multiple of $3$ and $15$,
      respectively; see Proposition~\ref{m13-prop} and
      Theorem~\ref{m14-thm}.} 
    \label{matching-fig}
  \end{figure}

  See Figure~\ref{matching-fig} for the homology of $\M[n]$ for $n \le
  14$. Many values were obtained via computer calculations
  \cite{BBLSW}; we have yet to find a computer-free method for
  calculating $\tilde{H}_d(\M[n];\Z)$ in the case that the group is 
  not free and not of size $3$. 

  \subsection{Notation}
  \label{basic-sec}
  
  For a finite set $S$, we let $\M[S]$ denote the matching complex on
  the complete graph with vertex set $S$. In particular, 
  $\M[{[n]}] = \M[n]$, where $[n] = \{1, \ldots, n\}$.
  For integers $a \le b$, we write $[a,b] = \{a, a+1, \ldots, b-1,
  b\}$.

  The {\em join} of two families of sets $\Delta$ and $\Sigma$,
  assumed to be defined on disjoint ground sets, is the family
  $\Delta \join \Sigma = \{ \delta \cup \sigma: \delta \in \Delta,
  \sigma \in \Sigma\}$.

  Whenever we discuss the homology of a simplicial complex or the
  relative homology of a pair of simplicial complexes, we mean reduced
  simplicial homology. 
  For a simplicial complex $\Sigma$ and a coefficient ring
  $\Field$, we denote the generator of $\tilde{C}_d(\Sigma; \Field)$
  corresponding to a set $\{e_0, \ldots, e_d\} \in \Sigma$ as
  $e_0 \wedge \cdots \wedge e_d$. 
  Given a cycle $z$ in a chain group $\tilde{C}_d(\Sigma;
  \Field)$, whenever we talk about $z$ as an element in the 
  induced homology group $\tilde{H}_d(\Sigma;\Field)$, we really mean
  the homology class of $z$.

  We will often consider pairs of
  complexes $(\Gamma, \Delta)$ such that
  $\Gamma \setminus \Delta$ is a union of families of the
  form
  \[
  \Sigma = \{\sigma\} \join \M[S], 
  \]
  where $\sigma = \{e_1, \ldots, e_s\}$ is a set of pairwise disjoint
  edges and $S$ is a subset of $[n]$ such that $S \cap e_i =
  \emptyset$ for each $i$.
  We may write the chain complex of $\Sigma$ as
  \[
    \tilde{C}_d(\Sigma;\Field) = (e_1 \wedge \cdots \wedge e_s)\Field
    \tensor_\Field \tilde{C}_{d-s}(\M[S];\Field),
  \]
  defining the boundary operator as 
  \[
  \partial(e_1 \wedge \cdots \wedge e_s \tensor_\Field c)
  = (-1)^s e_1 \wedge \cdots \wedge e_s \tensor_\Field \partial(c).
  \]
  For simplicity, we will often suppress $\Field$ from notation. For
  example, by some abuse of notation, we will write
  \[
  \langle e_1 \wedge \cdots \wedge e_s\rangle \tensor
  \tilde{C}_{d-s}(\M[S]) =
  (e_1 \wedge \cdots \wedge e_s)\Field \tensor_\Field
  \tilde{C}_{d-s}(\M[S];\Field).
  \]

  We say that a cycle $z$ in 
  $\tilde{C}_{d-1}(\M[n];\Field)$ has {\em type}
  $\brackom{n_1}{d_1} \wedge \cdots \wedge
  \brackom{n_s}{d_s}$ 
  if there is a partition  
  $[n] = \bigcup_{i=1}^s S_i$
  such that size of $S_i$ is $n_i$ and such that
  $z = z_1 \wedge \cdots \wedge z_s$,
  where $z_i$ is a cycle in $\tilde{C}_{d_i-1}(\M[S_i];\Field)$ for each
  $i$. We define a {\em refinement} of a type in the natural manner; 
  $\brackom{n_1}{d_1} \wedge \cdots \wedge
  \brackom{n_{s-2}}{d_{s-2}} \wedge \brackom{n_{s-1}}{d_{s-1}} \wedge
  \brackom{n_s}{d_s}$ is a refinement of  
  $\brackom{n_1}{d_1} \wedge \cdots \wedge \brackom{n_{s-2}}{d_{s-2}}
  \wedge \brackom{n_{s-1}+n_s}{d_{s-1}+d_s}$ and so on. We write
  $T \prec T'$ to denote that the type $T$ is a refinement of the type
  $T'$. If $z$ is of type $T$ and $T \prec T'$, then $z$ is also of
  type $T'$.
  Finally, we write
  $\brackom{n}{d}^2 = \brackom{n}{d} \wedge \brackom{n}{d}$, 
  $\brackom{n}{d}^3 = \brackom{n}{d}\wedge \brackom{n}{d}\wedge
  \brackom{n}{d}$, and so on.

  When dealing with the group $\tilde{H}_d(\M[n];\Z)$, we will find
  the following transformation very useful:
  \begin{equation}
    \left\{
    \begin{array}{l}
      k = 3d-n+4
      \\
      r = n-2d-3 
    \end{array}
    \right. \Longleftrightarrow
    \left\{
    \begin{array}{l}
    n = 2k+1+3r \\
    d = k-1+r.
    \end{array}
    \right.   
    \label{nd2kr-eq}
  \end{equation}
  In particular, we have the equivalences
  \[
  \left\lceil\frac{n-4}{3}\right\rceil \le d \le 
  \left\lfloor\frac{n-3}{2}\right\rfloor
  \Longleftrightarrow
  2d+3 \le n \le 3d+4 \Longleftrightarrow 
  \left\{
  \begin{array}{l}
    k \ge 0 \\
    r \ge 0.
  \end{array}
  \right.   
  \]
  For $n \ge 1$, Theorem~\ref{nonvan3intro-thm} yields that
  $\tilde{H}_d(\M[n];\Z)$ is nonzero if and only if these
  inequalities are satisfied.

  \subsection{Two classical results}
  \label{classic-sec}

  Before proceeding, we list two classical results pertaining to 
  the topology of the matching complex. 
  \begin{theorem}[Bouc \cite{Bouc}]
    For $n \ge 1$, the homology group $\tilde{H}_d(\M[n];\Q)
    = \tilde{H}_{k-1+r}(\M[2k+1+3r];\Q)$ is nonzero if and only if 
    \[
    \left\lceil\frac{n-\lfloor\sqrt{n}\rfloor-2}{2}\right\rceil \le d
    \le \left\lfloor\frac{n-3}{2}\right\rfloor
    \Longleftrightarrow 
    \left\{
    \begin{array}{l}
      k \ge \binom{r}{2} \\
      r \ge 0.
    \end{array}
    \right.   
    \]
  \label{bouc-thm}
  \end{theorem}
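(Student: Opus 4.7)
My plan is to use induction on $n$ in combination with the two long exact sequences of Bouc discussed in Sections~\ref{exseq012-sec} and~\ref{exseq034-sec}, exploiting the fact that rationally the situation is far cleaner than integrally. Since $\tilde{H}_*(\M[n]; \Q)$ carries a natural $\Symm{n}$-action, I would decompose the sequences into isotypic components, where the connecting homomorphisms become amenable to character-theoretic analysis. The substitution (\ref{nd2kr-eq}) is the coordinate system in which everything is readable: passing from $n$ to $n{-}1$ at fixed $d$ sends $(k,r)$ to $(k{+}1,r{-}1)$, while passing from $n$ to $n{-}2$ at $d$ replaced by $d{-}1$ sends $(k,r)$ to $(k,r{-}1)$. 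Under both operations, the inequality $k \ge \binom{r}{2}$ transforms into something weaker, which is precisely what makes induction feasible.

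For the vanishing direction I would argue in two parts. The upper bound $d \le \lfloor (n-3)/2 \rfloor$ reduces, modulo $\M[n]$ being $(\lfloor n/2\rfloor-1)$-dimensional, to proving that the top rational homology vanishes when $n$ is even; this can be handled by exhibiting an explicit null-chain for any perfect-matching cycle, or equivalently by showing that no sign representation of $\Symm{n}$ appears in the top chain module with the correct parity. The lower bound, equivalently $k < \binom{r}{2} \Rightarrow \tilde{H}_{k-1+r}(\M[2k+1+3r]; \Q) = 0$, would be proved by induction on $n$ using the first Bouc sequence: the hypothesis applied to $\M[n-1]$ (giving $(k{+}1,r{-}1)$) and to $\M[n-2]$ (giving $(k,r{-}1)$, shifted in degree) brackets the middle term, provided the base cases $r \in \{0,1\}$ have been established.

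For the non-vanishing direction I would build explicit rational cycles using the join decomposition. Choosing a partition $[n] = [2k+1] \sqcup T_1 \sqcup \cdots \sqcup T_r$ into a block of size $2k{+}1$ and $r$ triples yields an inclusion $\M[2k+1] \join \M[T_1] \join \cdots \join \M[T_r] \hookrightarrow \M[n]$, and the K\"unneth-for-joins formula gives
\[
\tilde{H}_{k-1+r}\bigl(\M[2k+1] \join \M[3]^{\join r}; \Q\bigr)
\;=\;
\tilde{H}_{k-1}(\M[2k+1]; \Q) \tensor \tilde{H}_0(\M[3]; \Q)^{\tensor r}
\]
in the correct degree. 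The base case $r = 0$ then reduces to producing a non-trivial class in $\tilde{H}_{k-1}(\M[2k+1]; \Q)$ whenever $k \ge 0$, for which one iteratively joins the two-dimensional class of $\M[3]$ with a suitable class from $\M[2k+1-3s]$ and induces on $k$ via Bouc's sequences. The delicate point is to verify that the subcomplex class is not killed by the inclusion into $\M[n]$; here the $\Symm{n}$-action rescues the argument, because the induced representation is a genuine summand of the homology of the ambient complex.

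The main obstacle is the sharpness of the $\lfloor \sqrt{n} \rfloor$ threshold. The inductive step for the vanishing part is easy to get off the ground, but the exact transition point $k = \binom{r}{2}$ is quadratic in $r$ while $n$ is linear, so the inductive window shrinks in a non-uniform way; tracking this requires either an auxiliary induction parameter or an explicit construction of non-bounding cycles on the diagonal $k = \binom{r}{2}$ to show that vanishing fails there. Classically this is handled via Bouc's description of the full $\Symm{n}$-character of $\tilde{H}_*(\M[n];\Q)$, which makes both the vanishing and the existence direction transparent simultaneously, and is the cleanest route I would follow.
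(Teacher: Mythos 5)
The paper does not supply a proof of this theorem; it explicitly cites Bouc's representation-theoretic formula for $\tilde{H}_d(\M[n];\Q)$ as a sum of irreducibles indexed by self-conjugate partitions with prescribed Durfee square, and observes that the vanishing criterion is then an immediate combinatorial consequence. Your closing sentence correctly identifies this as the clean route, so in that narrow sense you land in the right place. But the bulk of your proposal is a speculative inductive sketch that would not close as written, and since it occupies most of the argument it deserves concrete pushback.

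First, a parameter error: with $k=3d-n+4$, $r=n-2d-3$, passing from $\M[n]$ at degree $d$ to $\M[n-2]$ at degree $d-1$ sends $(k,r)$ to $(k-1,r)$, not $(k,r-1)$. The map you want is $(k,r)\mapsto(k,r-1)$, but that corresponds to $\M[n-3]$ at degree $d-1$ in the 0-3-4 sequence. More importantly, once the arithmetic is fixed, your claim that ``the inequality $k\ge\binom{r}{2}$ transforms into something weaker'' under both operations fails: under $\M[n-1]$ at degree $d$, the threshold $k\ge\binom{r}{2}$ becomes $k+1\ge\binom{r-1}{2}$, equivalently $k\ge\binom{r}{2}-r$, which is indeed weaker; but under $\M[n-2]$ at degree $d-1$ it becomes $k-1\ge\binom{r}{2}$, equivalently $k\ge\binom{r}{2}+1$, which is \emph{stronger}. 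Hence the 0-1-2 sequence alone cannot carry the vanishing induction: knowing $k<\binom{r}{2}$ does not force $k+1<\binom{r-1}{2}$ for the $\M[n-1]$ term, so the sandwiching argument does not even begin. This is precisely the ``non-uniform shrinking'' you allude to at the end, but it is not a technicality to be postponed; it is the whole content of the threshold.

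Second, your non-vanishing construction via $\M[2k+1]\join\M[3]^{\join r}\hookrightarrow\M[n]$ proves too much if taken at face value. The join cycle exists in degree $k-1+r$ for every $k\ge 0$ and $r\ge 0$, yet the ambient group is zero whenever $k<\binom{r}{2}$. Therefore the inclusion must kill the class in exactly that range, and the statement that ``the induced representation is a genuine summand of the homology of the ambient complex'' is false there. The $\Symm{n}$-action gives you no free lunch: whether a given induced class survives is exactly the question Bouc's character computation answers, and one cannot shortcut it by equivariance alone. An honest completion of your route would have to import the full character formula at this step, at which point the inductive scaffolding is doing no work.

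In short: the theorem is a citation in this paper, your concluding instinct (defer to Bouc's $\Symm{n}$-decomposition) is the right one, but the inductive and cycle-construction portions contain an arithmetic slip and two genuine gaps (the induction does not close on the vanishing side; the injectivity claim on the non-vanishing side is unsupported and cannot be true in the stated generality).
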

  Theorem~\ref{bouc-thm} is an immediate consequence of a concrete
  formula for the rational homology of $\M[n]$; see Bouc \cite{Bouc}
  for details and Wachs' survey \cite{Wachs} for an overview.
  \begin{theorem}[Bj\"{o}rner
%      , {\lovasz}, {\vrecica}, and {\zivaljevic}
      {et al.} \cite{BLVZ}]
    For $n \ge 1$, $\M[n]$ is $(\nu_n-1)$-connected, 
    where $\nu_n = \lceil\frac{n-4}{3}\rceil$. 
    \label{smallest-thm}
  \end{theorem}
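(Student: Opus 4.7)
The plan is to induct on $n$ using the filtration $\Delta^0_n \subseteq \Delta^1_n \subseteq \Delta^2_n = \M[n]$ from Section~\ref{filtration-sec} with parameter $m = 2$, which reduces the connectivity of $\M[n]$ to that of $\M[n-3]$ and $\M[n-4]$. I would dispose of the base cases $n \leq 6$ by direct inspection: for $n \leq 4$ the bound $\nu_n - 1 \leq -1$ asks only for nonemptiness, and for $n \in \{5, 6\}$ it asks only for path-connectedness, which is easy to check ($\M[5]$ is the Petersen graph).

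The key structural observation for the inductive step is that $\Delta^0_n$ is the cone on $\M[{[3,n]}] \cong \M[n-2]$ with apex the vertex $\{1,2\}$, hence contractible. The relative chain complex of $(\Delta^1_n, \Delta^0_n)$ splits over $(a, j) \in \{1, 2\} \times [3, n]$ into shifted copies of $\tilde{C}_\bullet(\M[{[3,n] \setminus \{j\}}])$, and the relative complex of $(\M[n], \Delta^1_n)$ splits over ordered pairs of distinct $i, j \in [3, n]$ into shifted copies of $\tilde{C}_\bullet(\M[{[3,n] \setminus \{i, j\}}])$. Combining the long exact sequences of the two pairs with the contractibility of $\Delta^0_n$ gives
\[
\tilde{H}_d(\Delta^1_n) \cong \bigdirsum_{a, j} \tilde{H}_{d-1}(\M[n-3])
\qquad \text{and} \qquad
\tilde{H}_d(\M[n], \Delta^1_n) \cong \bigdirsum_{i, j} \tilde{H}_{d-2}(\M[n-4]),
\]
so $\tilde{H}_d(\M[n]; \Z) = 0$ whenever $d \leq \min(\nu_{n-3},\, \nu_{n-4} + 1)$. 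A short case analysis on $n \bmod 3$ using $\nu_n = \lceil (n-4)/3 \rceil$ shows that the right-hand side is at least $\nu_n - 1$ for every $n \geq 5$, yielding homological $(\nu_n - 1)$-connectedness.

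To upgrade to homotopical connectivity I would appeal to the Hurewicz theorem, which requires simple connectivity of $\M[n]$ once $\nu_n - 1 \geq 2$. The same filtration supplies this: contractibility of $\Delta^0_n$ gives a homotopy equivalence $\Delta^1_n \simeq \Delta^1_n / \Delta^0_n$, and the latter is a wedge of $2(n-2)$ unreduced suspensions of $\M[n-3]$. Once $n \geq 8$, the inductive hypothesis makes $\M[n-3]$ path-connected, so each suspension, and hence the entire wedge, is simply connected. The cofiber $\M[n] / \Delta^1_n$ is a wedge of double suspensions of $\M[n-4]$ and is automatically simply connected, so a standard cofibration argument transfers simple connectivity to $\M[n]$, after which Hurewicz finishes the proof.

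The main technical obstacle is threading the induction so that both the homological and the simple-connectivity hypotheses are available at each step; in particular, the low-dimensional cases around $n = 7, 8, 9$ must be verified separately before the suspension mechanism begins to produce simple connectivity automatically.
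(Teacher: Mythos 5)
The paper itself gives no proof of this statement: Theorem~\ref{smallest-thm} is a citation to Bj\"orner, Lov\'asz, Vre\'cica, and {\zivaljevic}~\cite{BLVZ}, followed by the remark that the $\nu_n$-skeleton is in fact shellable (Shareshian--Wachs) and vertex decomposable (Athanasiadis). So your argument is an independent proof rather than a reconstruction of the paper's, and it is a nice sanity check that the $m=2$ filtration of Section~\ref{filtration-sec} reproves the connectivity bound internally.

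Your proof is correct in substance. The homological bookkeeping closes cleanly: $\nu_{n-3} = \lceil(n-7)/3\rceil = \nu_n - 1$ exactly, while $\nu_{n-4}+1 = \lceil(n-5)/3\rceil \geq \lceil(n-7)/3\rceil = \nu_n - 1$, so $\min(\nu_{n-3},\nu_{n-4}+1) = \nu_n - 1$ and the induction yields $\tilde{H}_d(\M[n];\Z) = 0$ for $d \leq \nu_n - 1$. The homotopy upgrade is also sound: each piece $\{\emptyset,\{aj\}\}\join\M[{[3,n]\setminus\{j\}}]$ of $\Delta^1_n$ is a simplicial cone whose base lies in $\Delta^0_n$, and distinct pieces intersect only inside $\Delta^0_n$, so $\Delta^1_n/\Delta^0_n$ is genuinely a one-point union of $2(n-2)$ unreduced suspensions of $\M[n-3]$; similarly $\M[n]/\Delta^1_n$ is a wedge of double suspensions of $\M[n-4]$, automatically simply connected once $\M[n-4]\neq\emptyset$, i.e.\ $n\geq 6$. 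Since collapsing a simply connected CW subcomplex does not change $\pi_1$, this gives $\pi_1(\M[n])=0$ for $n\geq 8$, which is exactly when it is needed. Two small corrections: the appeal to Hurewicz and simple connectivity is already required at $\nu_n - 1 = 1$ (so $n\geq 8$), not only at $\nu_n - 1 \geq 2$; and there is no need to treat $n=8,9$ as extra special cases beyond the base cases $n\leq 7$, because the inductive hypothesis already supplies path-connectedness of $\M[n-3]$ once $n\geq 8$. With those adjustments the argument goes through uniformly.
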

  Indeed, the $\nu_n$-skeleton of $\M[n]$ is shellable \cite{ShWa} and
  even vertex decomposable \cite{Ath}. As already mentioned in the
  introduction, there is nonvanishing homology in 
  degree $\nu_n$ for all $n \neq 2$; see Section~\ref{bottommatch-sec}
  for details.

  \section{Filtration of $\M[n]$ with respect to a fixed vertex
          set}
  \label{filtration-sec}

  The following general construction forms the basis of the three
  exact sequences presented in Sections~\ref{exseq012-sec},
  \ref{exseq034-sec}, and \ref{exseq0356-sec}. The first two sequences
  already appeared in the work of Bouc \cite{Bouc}, whereas the third
  one is new.

  Given a vertex set $S \subseteq [n]$, form a sequence
  \[
  \Delta^0_n \subseteq \Delta^1_n \subseteq \cdots \subseteq
  \Delta^{\min \{\# S,n-\# S \}}_n
  \]
  of simplicial complexes, where 
  we obtain $\Delta^{i-1}_n$ from $\M[n]$ by removing all matchings
  $\sigma$ containing at least $i$ edges $ab$ such that $a \in S$ and
  $b \in [n] \setminus S$. We also define $\Delta^{-1}_n =
  \emptyset$.
  Assuming that $S = [m]$, one easily checks that
  \begin{equation}
  \Delta^i_n \setminus \Delta^{i-1}_n
  = \bigcup 
  \{\{a_1b_1, \ldots, a_ib_i\}\} \join
  \M[{[m]\setminus A}]
  \join
  \M[{[m+1,n]\setminus B}],
  \label{matchunion-eq}
  \end{equation}
  where the union is over all pairs of sequences $(a_1,
  \ldots, a_i)$ and $(b_1, \ldots, b_i)$ of distinct elements such
  that $1 \le a_1 < \cdots < a_i \le m$ and
  $b_1, \ldots, b_i \in [m+1,n]$; $A = \{a_1, \ldots, a_i\}$ and $B =
  \{b_1, \ldots, b_i\}$. The families in the union form
  an antichain under inclusion, meaning that if $\sigma$ belongs to
  one of the families and $\tau$ to another, then $\sigma
  \not\subseteq \tau$ and $\tau \not\subseteq \sigma$.
  One readily verifies that this implies the following:
  \begin{lemma}
    For $0 \le i \le \min \{m,n-m\}$ and all $d$, we have that
    \[
    \tilde{H}_d(\Delta^i_n,\Delta^{i-1}_n)
    \cong \bigoplus
    \langle a_1b_1 \wedge \cdots \wedge a_ib_i\rangle \tensor
    \tilde{H}_{d-i}(\M[{[m]\setminus A}]
    \join
    \M[{[m+1,n]\setminus B}]),
    \]
    where 
    the direct sum is over all pairs of ordered sequences $(a_1,
    \ldots, a_i)$ and $(b_1, \ldots, b_i)$ with properties as above. 
    \label{relative-lem}
  \end{lemma}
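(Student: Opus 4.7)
The plan is to exhibit a direct-sum decomposition of the relative chain complex $\tilde{C}_*(\Delta^i_n, \Delta^{i-1}_n)$ indexed by the pairs of ordered sequences $(a_1,\ldots,a_i)$ and $(b_1,\ldots,b_i)$ appearing in (\ref{matchunion-eq}), and then to pass to homology.

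First, I would observe that every matching $\sigma \in \Delta^i_n \setminus \Delta^{i-1}_n$ contains exactly $i$ \emph{crossing} edges, that is, edges with one endpoint in $[m]$ and the other in $[m+1,n]$. Ordering these crossing edges by their $[m]$-endpoint assigns $\sigma$ to a unique family $\{\{a_1b_1,\ldots,a_ib_i\}\} \join \M[{[m]\setminus A}] \join \M[{[m+1,n]\setminus B}]$ in the union (\ref{matchunion-eq}), so at the level of generators the relative chain group splits as a direct sum over the ordered sequences.

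Next, I would check that this decomposition is compatible with the boundary operator modulo $\Delta^{i-1}_n$. A codimension-one face of $\sigma$ is obtained by deleting a single edge $e$: if $e$ is one of the $i$ crossing edges, then the face has only $i-1$ crossing edges and lies in $\Delta^{i-1}_n$, hence vanishes in the quotient; otherwise $e$ lies entirely in $[m]$ or entirely in $[m+1,n]$, the crossing edges of $\sigma$ are preserved, and the face remains in the same family. This is the concrete content of the antichain property stated after (\ref{matchunion-eq}). Using the tensor-product conventions from Section~\ref{basic-sec}, the surviving boundary on each summand is the boundary of $\langle a_1b_1 \wedge \cdots \wedge a_ib_i\rangle \tensor \tilde{C}_{*-i}(\M[{[m]\setminus A}]\join \M[{[m+1,n]\setminus B}])$ with the sign $(-1)^i$, which is exactly the boundary on the matching factor up to the global sign built into the convention.

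Taking homology commutes with finite direct sums, so the chain-level splitting immediately yields the claimed isomorphism. The only point requiring care is the bookkeeping of crossing edges and the verification that the face map respects the partition into families; once that is in place, the result is a formal consequence of the antichain property together with the notation already set up for joins with a distinguished simplex.
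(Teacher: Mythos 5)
Your proof is correct and takes essentially the same approach the paper has in mind: you exhibit the direct-sum decomposition of the relative chain complex indexed by the $i$ crossing edges and verify compatibility with the relative boundary, which is precisely the ``one readily verifies'' step that the paper attributes to the antichain property of the families in (\ref{matchunion-eq}).
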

  As a consequence, we have a long
  exact sequence of the form
  \[
  \begin{CD}
    & & & & \cdots @>>>\! 
    \displaystyle{\bigdirsum_t}
    \tilde{H}_{d-i+1}(\M[m-i] \join \M[n-m-i]) \\
    @>>>\! 
    \tilde{H}_{d}(\Delta_n^{i-1}) @>>>\!
    \tilde{H}_{d}(\Delta_n^i) @>>>\!
    \displaystyle{\bigdirsum_t}
    \tilde{H}_{d-i}(\M[m-i] \join \M[n-m-i]) \\
    @>>>\!
    \tilde{H}_{d-1}(\Delta_n^{i-1}) @>>>\! \cdots ,
  \end{CD}
  \]
  where $t = i!\binom{m}{i}\binom{n-m}{i}$.
  For $m-i \le 3$,
  the situation is particularly simple, as $\M[m-i]$ is then either
  the empty complex $\{\emptyset\}$, a single point, or three isolated
  points. We will exploit this fact in Sections~\ref{exseq012-sec},  
  \ref{exseq034-sec}, and \ref{exseq0356-sec}.

  \section{Five long exact sequences}

  We present five long exact sequences relating different
  families of matching complexes. Throughout this
  section, we consider an arbitrary coefficient ring $\Field$ with unit,
  which we suppress from notation for convenience.

  \subsection{Long exact sequence relating $\M[n]$, $\M[n-1]$, and
          $\M[n-2]$}
  \label{exseq012-sec}

  The choice $m=1$ yields the simplest special case of the
  construction in Section~\ref{filtration-sec}.
  Inserting $i=0$ and $i=1$ in (\ref{matchunion-eq}), we obtain 
  families involving complexes isomorphic to $\M[n-1]$ and $\M[n-2]$,
  respectively. More exactly, we have the following result:
  \begin{theorem}[Bouc \cite{Bouc}]
    For each $n \ge 2$, we have a long exact sequence
    \[
    \begin{CD}
      & & & & \cdots \!@>>>\! 
      \displaystyle{\bigoplus_{s=2}^n} 
      \langle 1s \rangle \tensor \tilde{H}_{d}(\M[{[2,n] \setminus
      \{s\}}]) \\ 
      \!@>>>\! 
      \tilde{H}_{d}(\M[{[2,n]}])\! @>>>\!
      \tilde{H}_{d}(\M[n]) \!@>\omega>>\!
      \displaystyle{\bigoplus_{s=2}^n} 
      \langle 1s\rangle \tensor \tilde{H}_{d-1}(\M[{[2,n] \setminus
      \{s\}}])  \\ 
      \!@>>>\!
      \tilde{H}_{d-1}(\M[{[2,n]}]) \!@>>>\! \cdots ,
    \end{CD}
    \]
    where $\omega$ is induced by the natural
    projection map. 
    \label{exseq012-thm}
  \end{theorem}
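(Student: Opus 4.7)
The plan is to specialize the filtration construction of Section~\ref{filtration-sec} to the case $m=1$ with $S=\{1\}$. For $n \ge 2$ we have $\min\{\#S,n-\#S\} = 1$, so the filtration collapses to the two genuinely nontrivial terms
\[
\Delta^{-1}_n = \emptyset \;\subseteq\; \Delta^0_n \;\subseteq\; \Delta^1_n = \M[n].
\]
The intermediate complex $\Delta^0_n$ consists of matchings of $K_n$ that use no edge incident to vertex $1$, which as a matching complex is precisely $\M[{[2,n]}]$.

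First, I would apply Lemma~\ref{relative-lem} with $m=1$ and $i=1$. Since $\M[{[1]\setminus\{1\}}] = \M[\emptyset] = \{\emptyset\}$ acts as the identity under simplicial join, the lemma specializes to
\[
\tilde{H}_d(\M[n], \M[{[2,n]}]) \;\cong\; \bigdirsum_{s=2}^{n} \langle 1s \rangle \tensor \tilde{H}_{d-1}(\M[{[2,n]\setminus\{s\}}]),
\]
the direct sum being over the $\binom{n-1}{1} = n-1$ choices of $b_1 = s \in [2,n]$ (with $a_1 = 1$ forced). Next, I would write down the standard long exact sequence in reduced simplicial homology for the pair $(\M[n], \M[{[2,n]}])$ and substitute this identification into the relative terms, which produces exactly the six-term window displayed in the theorem.

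Finally, it remains to identify the map $\omega$. By definition, the homomorphism $\tilde{H}_d(\M[n]) \to \tilde{H}_d(\M[n], \M[{[2,n]}])$ in the pair sequence is induced by the chain-level quotient $\tilde{C}_d(\M[n]) \to \tilde{C}_d(\M[n])/\tilde{C}_d(\M[{[2,n]}])$. Under the isomorphism of Lemma~\ref{relative-lem}, each simplex $\sigma \in \M[n]$ containing an edge $1s$ is written uniquely as $\{1s\} \cup \tau$ with $\tau$ a matching on $[2,n]\setminus\{s\}$, and its image in the summand indexed by $s$ is $\langle 1s\rangle \tensor \tau$; simplices avoiding vertex $1$ are sent to $0$. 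This is exactly the ``natural projection map'' description.

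The proof is largely formal once Lemma~\ref{relative-lem} is in hand. The only delicate step, and the one I would verify most carefully, is that the sign conventions introduced in Section~\ref{basic-sec} for the boundary on a chain complex of the form $\langle e_1 \wedge \cdots \wedge e_s\rangle \tensor \tilde{C}_{d-s}(\M[S])$ make the isomorphism of Lemma~\ref{relative-lem} compatible with the boundary in the relative chain complex; this ensures that the long exact sequence of the pair descends intact to the form stated in the theorem.
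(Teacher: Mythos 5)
Your proof is correct and follows exactly the route the paper intends: the paper gives no detailed argument for Theorem~\ref{exseq012-thm}, merely remarking in Section~\ref{exseq012-sec} that it is the $m=1$ specialization of the filtration of Section~\ref{filtration-sec}, and your write-up fills in precisely those details — identifying $\Delta^0_n = \M[{[2,n]}]$, applying Lemma~\ref{relative-lem} with $m=i=1$ to compute $\tilde{H}_d(\M[n],\M[{[2,n]}])$, and reading $\omega$ off the pair sequence as the induced projection.
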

  We refer to this sequence as the {\em 0-1-2 sequence}, thereby
  indicating that the sequence relates $\M[n-0]$, $\M[n-1]$,
  and $\M[n-2]$. 

  \subsection{Long exact sequence relating $\M[n]$, $\M[n-3]$, and
  $\M[n-4]$}
  \label{exseq034-sec}

  We proceed with the case $m=2$ of the construction in 
  Section~\ref{filtration-sec}.
  In this case, $i \in \{1,2\}$ inserted into (\ref{matchunion-eq})
  yields families involving complexes isomorphic to $\M[n-3]$ and
  $\M[n-4]$, whereas $i=0$ yields a family involving contractible
  complexes; $\M[2]$ is a point. This turns out to imply the following
  result: 
  \begin{theorem}[Bouc \cite{Bouc}]
    Let $n \ge 4$ and define
    \begin{eqnarray*}
      Q_d^{n-4} &=& \bigdirsum_{s \neq t \in [3,n]}
      \langle 1s \wedge 2t \rangle \tensor \tilde{H}_{d}(\M[{[3,n]\setminus
      \{s,t\}}]); \\ 
      R_d^{n-3} &=& \bigdirsum_{a=1}^2 \bigdirsum_{u=3}^n
      \langle au \rangle \tensor \tilde{H}_{d}(\M[{[3,n]\setminus \{u\}}]).
    \end{eqnarray*}
    Then we have a long exact sequence
    \[
    \begin{CD}
      & & & & \cdots @>>> 
      Q_{d-1}^{n-4} \\
      @>\psi^*>>  
      R_{d-1}^{n-3}
      @>\varphi^*>>
      \tilde{H}_{d}(\M[n])  
      @>\kappa^*>> 
      Q_{d-2}^{n-4}
      \\
      @>\psi^*>>  
      R_{d-2}^{n-3}
      @>>> \cdots ,
    \end{CD}
    \]
    where $\psi^*$ is induced by the map 
    $\psi : 1s \wedge 2t \tensor x \mapsto
    2t \tensor x - 1s \tensor x$,
    $\varphi^*$ is induced by the map 
    $\varphi : au \tensor x \mapsto (au-12)\wedge x$,
    and $\kappa^*$ is induced by the natural projection map.
    \label{exseq034-thm}
  \end{theorem}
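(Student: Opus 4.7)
The plan is to apply the filtration construction of Section~\ref{filtration-sec} with $m=2$, yielding complexes $\emptyset = \Delta_n^{-1} \subseteq \Delta_n^0 \subseteq \Delta_n^1 \subseteq \Delta_n^2 = \M[n]$. The first observation is that $\Delta_n^0$, consisting of matchings with no edge between $\{1,2\}$ and $[3,n]$, equals $\M[\{1,2\}] \join \M[{[3,n]}]$. Since $\M[\{1,2\}]$ is a single point (its only vertex is the edge $12$), this join is a cone over $\M[{[3,n]}]$ and hence contractible, so $\tilde{H}_*(\Delta_n^0) = 0$.

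Next, I would invoke Lemma~\ref{relative-lem} with $m=2$. For $i \in \{1,2\}$ the factor $\M[{[2] \setminus A}]$ with $|A|=i$ collapses to $\{\emptyset\}$ and hence disappears from the join, giving $\tilde{H}_d(\Delta_n^1,\Delta_n^0) \cong R_{d-1}^{n-3}$ and $\tilde{H}_d(\Delta_n^2,\Delta_n^1) \cong Q_{d-2}^{n-4}$. Combining the first of these with the vanishing of $\tilde{H}_*(\Delta_n^0)$ in the long exact sequence of the pair $(\Delta_n^1, \Delta_n^0)$ yields $\tilde{H}_d(\Delta_n^1) \cong R_{d-1}^{n-3}$. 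Feeding these identifications into the long exact sequence of the pair $(\M[n], \Delta_n^1)$ then produces an exact sequence of exactly the form stated in the theorem.

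The remaining, and most delicate, task is to identify the three maps explicitly. The projection $\kappa^*$ is immediate from the construction. For the connecting homomorphism $\psi^*$, I would lift a class $1s \wedge 2t \tensor x \in Q_{d-2}^{n-4}$ to the chain $1s \wedge 2t \wedge x$ in $\M[n]$ and take its boundary: since $x$ is a cycle, $\partial(1s \wedge 2t \wedge x) = 2t \wedge x - 1s \wedge x$, which reads in $R_{d-2}^{n-3}$ as $2t \tensor x - 1s \tensor x$. The formula for $\varphi^*$ requires inverting the isomorphism $R_{d-1}^{n-3} \cong \tilde{H}_d(\Delta_n^1)$, and this is the step I expect to be the main obstacle. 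The natural chain $au \wedge x$ has boundary $\partial(au \wedge x) = x$, which lies in $\Delta_n^0$ but is nonzero, so $au \wedge x$ is not a cycle in $\Delta_n^1$. The key observation is that the chain $12 \wedge x$ lies in $\Delta_n^0$ (the edge $12$ is not a cross-edge and its endpoints are disjoint from the support of $x$) and also satisfies $\partial(12 \wedge x) = x$, so $(au - 12) \wedge x$ is a genuine cycle in $\Delta_n^1$ representing the class of $au \tensor x$. This gives $\varphi^*(au \tensor x) = (au - 12) \wedge x$ and completes the proof.
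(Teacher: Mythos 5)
Your proof is correct and follows precisely the framework the paper sets up in Section~\ref{filtration-sec} (the paper cites the theorem to Bouc without spelling out a proof, but explicitly identifies it as the $m=2$ case of the filtration). You correctly observe that $\Delta_n^0 = \M[\{1,2\}] \join \M[{[3,n]}]$ is a cone on $12$ and hence acyclic, apply Lemma~\ref{relative-lem} to identify $\tilde{H}_d(\Delta_n^1,\Delta_n^0) \cong R_{d-1}^{n-3}$ and $\tilde{H}_d(\Delta_n^2,\Delta_n^1) \cong Q_{d-2}^{n-4}$, and then feed the resulting isomorphism $\tilde{H}_d(\Delta_n^1) \cong R_{d-1}^{n-3}$ into the long exact sequence of the pair $(\M[n],\Delta_n^1)$. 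Your computations of the three maps are also correct: $\kappa^*$ is the projection by construction, the connecting map $\psi^*$ is obtained by lifting $1s \wedge 2t \tensor x$ to $1s \wedge 2t \wedge x$ and applying $\partial$, and $\varphi^*$ requires producing the absolute cycle $(au-12)\wedge x$ in $\Delta_n^1$ representing $au \tensor x$; you correctly identify $12 \wedge x$ as the needed correction term lying in $\Delta_n^0$. This mirrors the approach the paper uses in the proofs of Lemma~\ref{exseq0356-lem} and Theorem~\ref{exseq0356-thm} for the $m=3$ sequence.
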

  We refer to this sequence as the {\em 0-3-4 sequence}.

  \subsection{Long exact sequence relating $\M[n]$, $\M[n-3]$, $\M[n-5]$,
  and $\M[n-6]$}
  \label{exseq0356-sec}

  For our third application of the construction in 
  Section~\ref{filtration-sec}, we consider $m=3$.
  In this case, the relevant matching complexes are isomorphic to
  $\M[n-3], \M[n-5],$ and $\M[n-6]$. 

  As in Section~\ref{filtration-sec}, 
  we define $\Delta^i_n$ to be the complex of matchings $\sigma$ 
  such that at most $i$ of the vertices in $\{1,2,3\}$ 
  are matched in $\sigma \setminus \{12,13,23\}$. 

  \begin{lemma}
    Let $n \ge 5$. We have an isomorphism
    \[
    \varphi^* : P_{d-2}^{n-5} \oplus Q_{d-1}^{n-3}
    \rightarrow \tilde{H}_d(\Delta^2_n),
    \]
    where
    \[
    P_{d}^{n-5} = 
    \bigoplus_{1 \le a < b \le 3} 
    \bigoplus_{s \neq t \in [4,n]}
    \langle as\wedge bt \rangle \tensor \tilde{H}_{d}(\M[{[4,n] \setminus
    \{s,t\}}])
    \]
    and
    \[
    Q_{d}^{n-3} =
    \bigoplus_{c=2}^3 \langle 1c \rangle \tensor \tilde{H}_{d}(\M[{[4,n]}]).
    \]
    The isomorphism $\varphi^*$ is induced by the map $\varphi$
    defined by 
    $\varphi(1c \tensor x) = \varphi(1c) \wedge
    x$, where $\varphi(1c) = 1c - 23$, and
    $\varphi(as \wedge bt \tensor x) = 
    \varphi(as \wedge bt) \wedge x$, where 
    \[
    \varphi(as\wedge bt) = as\wedge bt
    + ac \wedge (st - bt) + bc \wedge (as - st)
    \]
    and $\{a,b,c\} = \{1,2,3\}$.
    \label{exseq0356-lem}
  \end{lemma}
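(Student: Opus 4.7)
The strategy is to analyze the filtration $\Delta^0_n \subseteq \Delta^1_n \subseteq \Delta^2_n$ and deduce the lemma from two assertions: the inclusion $\Delta^0_n \hookrightarrow \Delta^1_n$ induces an isomorphism on homology that matches the $Q$-part of $\varphi^*$; and the connecting homomorphism in the long exact sequence for the pair $(\Delta^2_n,\Delta^1_n)$ vanishes, with explicit cycle lifts produced by the $P$-part of $\varphi^*$. Together these give a short exact sequence $0 \to Q_{d-1}^{n-3} \to \tilde{H}_d(\Delta^2_n) \to P_{d-2}^{n-5} \to 0$ whose splitting is exactly $\varphi^*$.

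For the first assertion, I would apply Lemma~\ref{relative-lem} with $m=3$ and $i=1$: each summand of $\tilde{H}_d(\Delta^1_n,\Delta^0_n)$ involves the factor $\M[{\{1,2,3\}\setminus\{a\}}] = \M[2]$, which is a single point and hence contractible. The join with a contractible complex is itself contractible, so $\tilde{H}_d(\Delta^1_n,\Delta^0_n) = 0$ for every $d$ and the inclusion induces an isomorphism on homology. Since $\Delta^0_n = \M[3]\join\M[{[4,n]}]$ and $\tilde{H}_*(\M[3])$ is a free module concentrated in degree zero with basis $\{12-23,\,13-23\}$, the K\"unneth formula for joins (the Tor terms vanish because one factor is free) yields
\[
\tilde{H}_d(\Delta^0_n) \cong \bigdirsum_{c=2}^{3} \langle 1c-23\rangle \tensor \tilde{H}_{d-1}(\M[{[4,n]}]),
\]
which is precisely the image of $Q_{d-1}^{n-3}$ under $\varphi$.

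For the second assertion, Lemma~\ref{relative-lem} with $m=3$ and $i=2$ uses $\M[{\{1,2,3\}\setminus\{a_1,a_2\}}] = \M[1] = \{\emptyset\}$, so each join degenerates to $\M[{[4,n]\setminus\{b_1,b_2\}}] \cong \M[n-5]$ and $\tilde{H}_d(\Delta^2_n,\Delta^1_n) \cong P_{d-2}^{n-5}$. The crux is to check that $\varphi(as\wedge bt)\wedge x$ is a cycle in $\Delta^2_n$ projecting to the generator $as\wedge bt\tensor x$ of this relative group. A direct boundary computation gives $\partial(as\wedge bt) = bt - as$, $\partial(ac\wedge(st-bt)) = st - bt$, and $\partial(bc\wedge(as-st)) = as - st$, whose sum is zero; meanwhile, each correction summand contains at most one crossing edge and so already lies in $\Delta^1_n$, leaving only $as\wedge bt\tensor x$ in the quotient. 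This yields a chain-level section on cycles, forcing the connecting homomorphism to vanish and producing the desired split short exact sequence. The hard part will be the sign-tracking in the boundary computation: the specific combination defining $\varphi(as\wedge bt)$ is engineered for the orientation conventions of Section~\ref{basic-sec}, so the signs must be verified consistently; once that is done, the rest is formal.
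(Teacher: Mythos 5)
Your proposal is correct and follows essentially the same route as the paper: use Lemma~\ref{relative-lem} to identify the relative groups, note that $\tilde{H}_*(\Delta^1_n,\Delta^0_n)=0$ because each summand involves the point $\M[2]$ (so coning kills homology), compute $\tilde{H}_*(\Delta^0_n)$ from the join $\M[3]\join\M[{[4,n]}]$, and split the pair $(\Delta^2_n,\Delta^1_n)$ by checking that $\varphi(as\wedge bt)$ is a cycle whose correction terms lie in $\Delta^1_n$. The paper merely argues the two stages of the filtration in the opposite order and states the split-exactness of the short sequence at the chain level, but the content and all key computations match yours.
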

  \begin{proof}
    First, we show that the sequence
    \begin{equation}
    \begin{CD}
      0  @>>>  
      \tilde{H}_d(\Delta^{1}_n) 
      @>>>  
      \tilde{H}_d(\Delta^{2}_n)
       @>>>  
      \tilde{H}_d(\Delta^{2}_n,\Delta^{1}_n) &
      @>>>  0
    \end{CD}
    \label{delta2seq-eq}
    \end{equation}
    is split exact for each $d$. 
    To see this, first note that 
      $\tilde{H}_d(\Delta^{2}_n,\Delta^{1}_n)
    \cong P_{d-2}^{n-5}$; apply Lemma~\ref{relative-lem}.
    Next, define $\hat{\varphi}$ to be the restriction
    of $\varphi$ to 
    $\tilde{C}_d(\Delta_n^2,\Delta_n^1)$
    and note that the projection of 
    $\hat{\varphi}(as \wedge bt \tensor x)$ on
    $\tilde{C}_i(\Delta^2_n,\Delta^1_n)$ is again 
    $as \wedge bt \tensor x$. Since $\hat{\varphi}$ clearly commutes
    with the boundary operator, the
    sequence (\ref{delta2seq-eq}) is split exact as desired.
    We conclude that we have an isomorphism
    \[
    \tilde{H}_d(\Delta^{2}_n) \cong 
    \tilde{H}_d(\Delta^{1}_n) \oplus
    P_{d-2}^{n-5}.
    \]

    It remains to prove that the restriction 
    of $\varphi^*$ to $Q_{d-1}^{n-3}$ defines an isomorphism 
    from $Q_{d-1}^{n-3}$ to $\tilde{H}_d(\Delta^{1}_n)$.
    By Lemma~\ref{relative-lem}, that would be true if we replaced
    $\Delta_n^1$ with $\Delta_n^0$. Thus to conclude the
    proof, it suffices to prove that the relative homology of the pair 
    $(\Delta_n^1,\Delta_n^0)$ vanishes. By Lemma~\ref{relative-lem},
    we obtain that
    \[
    \tilde{H}_d(\Delta_n^1,\Delta_n^0)
    \cong \bigoplus_{a=1}^3 \bigoplus_{u=4}^n 
    \langle au \rangle \tensor  \tilde{H}_{d-1}(\M[\{1,2,3\} \setminus 
    \{a\}] \join \M[{[4,n] \setminus \{u\}}]) = 0;
    \]
    the latter equality is a consequence of the fact that
    $\M[\{1,2,3\} \setminus \{a\}] \cong \M[2]$ is a point. 
  \end{proof}
  
  \begin{theorem}
    Let $n \ge 6$.
    Define $P_{d}^{n-5}$, $Q_d^{n-3}$, and $\varphi^*$ as in
    Lemma~\ref{exseq0356-lem} and  
    let 
    \[
    R_{d}^{n-6} = \bigoplus_{(s,t,u)} \langle 1s \wedge 2t \wedge 3u
    \rangle \tensor   
    \tilde{H}_{d}(\M[{[4,n] \setminus
    \{s,t,u\}}]),
    \]
    where the sum is over all triples of distinct integers $(s,t,u)$
    such that $s,t,u \in [4,n]$.
    Then we have a long exact sequence
    \[
    \begin{CD}
      & & & & \cdots
      @>>>  
      R_{d-2}^{n-6}  \\ 
      @>\psi^*>>  
      P_{d-2}^{n-5} \oplus Q_{d-1}^{n-3} 
      @>\iota^*\circ \varphi^*>>  
      \tilde{H}_d(\M[n])
      @>>>  
      R_{d-3}^{n-6}  \\
      @>\psi^*>>  
      P_{d-3}^{n-5} \oplus Q_{d-2}^{n-3}  
      @>>> 
      \cdots ,
    \end{CD}
    \]
    where  $\psi^*$ is induced by the map
    \begin{eqnarray*}
      \psi(1s \wedge 2t \wedge 3u \tensor x) 
      &=& 1s\wedge 2t \tensor x + 2t\wedge 3u \tensor x 
      - 1s \wedge 3u \tensor x \\ 
      &+& 
      12 \tensor (su-tu) \wedge x + 
      13 \tensor (tu-st) \wedge x
    \end{eqnarray*}
    and $\iota^*$ is induced by the natural inclusion map
    $\iota : \tilde{C}_d(\Delta^2_n) \rightarrow
    \tilde{C}_d(\M[n])$.
    \label{exseq0356-thm}
  \end{theorem}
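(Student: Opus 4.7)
The plan is to apply the long exact sequence of the pair $(\M[n], \Delta^2_n) = (\Delta^3_n, \Delta^2_n)$, where the filtration $\Delta^\bullet_n$ is the $m = 3$ case of Section~\ref{filtration-sec}, and to identify each term via the preceding lemmas. First, Lemma~\ref{relative-lem} applied with $m = i = 3$, together with the observation that $\M[\emptyset] \join \M[S] = \M[S]$, yields
\[
\tilde{H}_d(\M[n], \Delta^2_n)
\cong \bigoplus_{(s,t,u)}
\langle 1s \wedge 2t \wedge 3u \rangle \otimes
\tilde{H}_{d-3}(\M[{[4,n]\setminus\{s,t,u\}}])
= R_{d-3}^{n-6}.
\]
Combining this with the isomorphism $\varphi^* : P_{d-2}^{n-5} \oplus Q_{d-1}^{n-3} \xrightarrow{\sim} \tilde{H}_d(\Delta^2_n)$ supplied by Lemma~\ref{exseq0356-lem}, and substituting into the long exact sequence of the pair, immediately produces the displayed sequence, with the forward map realized as $\iota^* \circ \varphi^*$ (for $\iota : \Delta^2_n \hookrightarrow \M[n]$) and the connecting map defined as $\psi^* := (\varphi^*)^{-1} \circ \partial_*$.

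What remains is to verify that $\psi^*$ coincides with the map induced by the explicit chain-level formula for $\psi$ stated in the theorem. Given a cycle $x \in \tilde{C}_{d-3}(\M[{[4,n]\setminus\{s,t,u\}}])$, the relative class $1s \wedge 2t \wedge 3u \otimes x$ lifts to the chain $1s \wedge 2t \wedge 3u \wedge x \in \tilde{C}_d(\M[n])$, whose boundary
\[
\partial(1s \wedge 2t \wedge 3u \wedge x)
= (1s \wedge 2t + 2t \wedge 3u - 1s \wedge 3u) \wedge x
\]
lies in $\tilde{C}_{d-1}(\Delta^2_n)$ and by definition represents $\partial_*(1s \wedge 2t \wedge 3u \otimes x)$. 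To finish, one applies $\varphi$ term by term to the proposed expression for $\psi(1s \wedge 2t \wedge 3u \otimes x)$, expanding $\varphi(1s \wedge 2t)$, $\varphi(2t \wedge 3u)$, and $\varphi(1s \wedge 3u)$ via the formula of Lemma~\ref{exseq0356-lem} with the appropriate choices of $\{a,b,c\} = \{1,2,3\}$, and substituting $\varphi(12 \otimes y) = (12 - 23) \wedge y$ and $\varphi(13 \otimes y) = (13 - 23) \wedge y$.

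The main obstacle is the ensuing bookkeeping. The correction summands $12 \otimes (su - tu) \wedge x$ and $13 \otimes (tu - st) \wedge x$ in $\psi$ must be chosen precisely so that the secondary wedges---those containing $23$, or a non-primary combination such as $13 \wedge st$ or $12 \wedge tu$---coming from the last two terms of $\varphi(as \wedge bt) = as \wedge bt + ac \wedge (st - bt) + bc \wedge (as - st)$ cancel pairwise with those arising from $(12-23) \wedge (su-tu)$ and $(13-23) \wedge (tu-st)$. After careful tracking of signs, this cancellation is in fact exact on the chain level, leaving only the primary part $(1s \wedge 2t + 2t \wedge 3u - 1s \wedge 3u) \wedge x$. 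This identifies $\varphi \circ \psi$ with $\partial$ applied to the chosen lift, whence $\psi^* = (\varphi^*)^{-1} \circ \partial_*$ in homology, completing the proof.
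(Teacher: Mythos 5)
Your proposal follows exactly the paper's argument: apply the long exact sequence of the pair $(\M[n],\Delta^2_n)$, identify $\tilde{H}_d(\M[n],\Delta^2_n)\cong R_{d-3}^{n-6}$ via the relative-homology lemma, substitute the isomorphism $\varphi^*$ for $\tilde{H}_d(\Delta^2_n)$, and then check that the connecting map transported through $\varphi^*$ is induced by the stated chain map $\psi$ by verifying $\varphi\circ\psi=\partial$ on generators. The only thing you defer (``after careful tracking of signs, this cancellation is in fact exact'') is precisely the short explicit expansion that the paper carries out; that computation does close as you claim, so the argument is sound.
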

  \begin{proof}
    By Lemma~\ref{relative-lem},
    $\tilde{H}_d(\Delta^{3}_n,\Delta^{2}_n) \cong R_{d-3}^{n-6}$.
    Hence by Lemma~\ref{exseq0356-lem}, it remains to prove that $\psi^*$
    has properties as stated in the theorem. For this, note that
    the natural map 
    \[
    \hat{\psi} : \tilde{C}_d(\Delta^{3}_n,\Delta^{2}_n)
    \rightarrow  \tilde{C}_d(\Delta_n^2)
    \]
    is given by 
    \[
    \hat{\psi}(1s \wedge 2t \wedge 3u)
    = \partial(1s \wedge 2t \wedge 3u) =
    1s\wedge 2t + 2t\wedge 3u - 1s\wedge 3u,
    \]
    suppressing ``$\tensor x$'' from notation.
    Moreover, note that 
    \begin{eqnarray*}
      & &\varphi(12 \tensor (su-tu) + 
      13 \tensor (tu-st)) \\
    &=& (12-23) \wedge (su-tu)
    + (13-23) \wedge (tu-st) \\
    &=&
    {}12 \wedge (su-tu) + 
    {}13 \wedge (tu-st) + 
    {}23 \wedge (st-su)
    \end{eqnarray*}
    and
    \begin{eqnarray*}
    & &\varphi(1s \wedge 2t +
    2t \wedge 3u -
    1s \wedge 3u) - \partial(1s\wedge 2t\wedge 3u)
    \\
    &=&
      13 \wedge (st-2t) +       23\wedge (1s-st) + 
      12 \wedge (tu-3u) + 13\wedge (2t-tu)\\
      &-&
      12\wedge (su-3u) - 23 \wedge (1s-su) \\
      &=& 
      12 \wedge (tu-su) + 
      13 \wedge (st-tu) + 
      23\wedge (su-st).
    \end{eqnarray*}
    Since $\psi$ is given by $\varphi^{-1} \circ \hat{\psi}$, 
    we are done.
  \end{proof}

  We refer to this sequence as the  {\em 0-3-5-6 sequence}.

  \begin{corollary}
    For each $n \ge 6$, we have the exact sequence
    \[
    \begin{CD}
      R_{\nu_n-2}^{n-6}  
      @>\psi^*>>  
      P_{\nu_n-2}^{n-5} \oplus
      Q_{\nu_n-1}^{n-3} 
      @>\iota^*\circ \varphi^*>>  
      \tilde{H}_{\nu_n}(\M[n])
       @>>> 0,
    \end{CD}
    \]
    where $\nu_n = \lceil\frac{n-4}{3}\rceil$.
    If $n \equiv 1 \pmod{3}$, then
    $P_{\nu_n-2}^{n-5} = 0$.
    \label{exseq0356-cor}
  \end{corollary}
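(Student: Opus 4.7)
The plan is to apply Theorem~\ref{exseq0356-thm} at the specific degree $d = \nu_n$ and then use the Bj\"orner--Lov\'asz--Vre\'cica--\zivaljevi\'c{} connectivity bound (Theorem~\ref{smallest-thm}) to kill the offending terms appearing to the right of $\tilde{H}_{\nu_n}(\M[n])$ in the 0-3-5-6 long exact sequence.

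For the first assertion, I need to extract from the long exact sequence of Theorem~\ref{exseq0356-thm} that the map $\iota^* \circ \varphi^*$ is surjective. The next term to the right of $\tilde{H}_{\nu_n}(\M[n])$ in that sequence is $R_{\nu_n - 3}^{n-6}$, so it suffices to show that this group vanishes. Each summand of $R_{\nu_n-3}^{n-6}$ has the form $\tilde{H}_{\nu_n-3}(\M[S])$ with $|S| = n-6$. The key arithmetic observation is that $\nu_{n-6} = \lceil (n-10)/3 \rceil = \lceil (n-4)/3 \rceil - 2 = \nu_n - 2$, so by Theorem~\ref{smallest-thm}, $\M[n-6]$ is $(\nu_n - 3)$-connected; hence $\tilde{H}_{\nu_n-3}(\M[n-6]) = 0$, and consequently $R_{\nu_n-3}^{n-6} = 0$.

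For the second assertion, assume $n = 3r+1$, so $\nu_n = r-1$ and $\nu_n - 2 = r-3$. Every summand of $P_{\nu_n - 2}^{n-5}$ is of the form $\tilde{H}_{r-3}(\M[S])$ with $|S| = n-5 = 3r-4$. A similar arithmetic check gives $\nu_{3r-4} = \lceil (3r-8)/3 \rceil = r-2$, so Theorem~\ref{smallest-thm} tells us that $\M[3r-4]$ is $(r-3)$-connected, forcing $\tilde{H}_{r-3}(\M[3r-4]) = 0$ and hence $P_{\nu_n-2}^{n-5} = 0$.

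There is really no serious obstacle here; the corollary is a bookkeeping consequence of truncating the 0-3-5-6 sequence at the bottom nonvanishing degree. The only thing to verify is the two index computations $\nu_{n-6} = \nu_n - 2$ and $\nu_{3r-4} = r - 2$, both of which are immediate. Conceptually, the statement simply records that at the bottom of the homology, the filtration-based sequence gives a presentation of $\tilde{H}_{\nu_n}(\M[n])$ by generators coming from $P^{n-5}_{\nu_n-2} \oplus Q^{n-3}_{\nu_n - 1}$ and relations coming from $R^{n-6}_{\nu_n-2}$, with the $P$-summand disappearing entirely in the arithmetically cleanest case $n \equiv 1 \pmod 3$.
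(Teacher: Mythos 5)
Your proof is correct and is exactly the argument the paper has in mind: the paper disposes of the corollary in one line by citing Theorems~\ref{smallest-thm} and \ref{exseq0356-thm}, and your write-up merely spells out the two index computations ($\nu_{n-6}=\nu_n-2$ and $\nu_{3r-4}=r-2$) that make the cited connectivity bound kill $R_{\nu_n-3}^{n-6}$ in general and $P_{\nu_n-2}^{n-5}$ when $n\equiv 1\pmod 3$.
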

  \begin{proof}
    This is immediate by
    Theorems~\ref{smallest-thm} and \ref{exseq0356-thm}.
  \end{proof}

  \subsection{Long exact sequence relating $\M[n]$, $\M[n]\setminus e$
    and $\M[n-2]$} 
  \label{exseq0e2-sec}

  We proceed with the long exact sequence for
  the pair $(\M[n],\M[n] \setminus e)$, where $e$ is any edge
  and $\M[n] \setminus e$ is the complex obtained by removing the
  $0$-cell $e$.
  \begin{theorem}
    For each $n \ge 2$ and each edge $e$ in the complete graph $K_n$,
    we have a long exact sequence
    \[
    \begin{CD}
      & & & & \cdots \!@>>>\! 
      \langle e\rangle \tensor \tilde{H}_{d}(\M[{[n]\setminus e}]) \\
      \!@>>>\! 
      \tilde{H}_{d}(\M[n]\setminus e)\! @>>>\!
      \tilde{H}_{d}(\M[n]) \!@>\omega>>\!
      \langle e \rangle \tensor \tilde{H}_{d-1}(\M[{[n]\setminus e}])  \\
      \!@>>>\!
      \tilde{H}_{d-1}(\M[n] \setminus e) \!@>>>\! \cdots ,
    \end{CD}
    \]
    where $\omega$ is induced by the natural
    projection map. 
    \label{exseq0e2-thm}
  \end{theorem}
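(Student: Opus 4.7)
The plan is to apply the standard long exact sequence in reduced simplicial homology for the pair $(\M[n], \M[n] \setminus e)$ and then identify the relative homology explicitly. The work is essentially bookkeeping, following the conventions fixed in Section~\ref{basic-sec}.

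First, I would observe that a face $\sigma$ of $\M[n]$ fails to lie in $\M[n] \setminus e$ precisely when $e \in \sigma$; in that case $\sigma \setminus \{e\}$ is a matching on the vertex set $[n] \setminus e$ of size $n-2$. Hence the faces of $\M[n]$ missing from $\M[n] \setminus e$ are exactly those of the form $\{e\} \cup \tau$ with $\tau \in \M[{[n] \setminus e}]$, so at the chain level
\[
\tilde{C}_d(\M[n], \M[n] \setminus e) \cong \langle e \rangle \tensor \tilde{C}_{d-1}(\M[{[n] \setminus e}]),
\]
where the isomorphism sends the class of $e \wedge e_1 \wedge \cdots \wedge e_d$ to $e \tensor (e_1 \wedge \cdots \wedge e_d)$.

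Next, I would verify that this identification is a chain isomorphism. Expanding the simplicial boundary of $e \wedge e_1 \wedge \cdots \wedge e_d$ yields
\[
\partial(e \wedge e_1 \wedge \cdots \wedge e_d) = e_1 \wedge \cdots \wedge e_d - e \wedge \partial(e_1 \wedge \cdots \wedge e_d),
\]
and the first summand, not containing $e$, represents a chain in $\M[n] \setminus e$ and thus vanishes in the quotient. This leaves exactly the boundary $e \tensor c \mapsto -e \tensor \partial(c)$ agreed upon in Section~\ref{basic-sec} for $s = 1$, so passing to homology produces
\[
\tilde{H}_d(\M[n], \M[n] \setminus e) \cong \langle e \rangle \tensor \tilde{H}_{d-1}(\M[{[n] \setminus e}]).
\]

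Inserting this identification into the standard long exact sequence of the pair gives precisely the sequence in the statement; the map $\omega$, being the connecting map $\tilde{H}_d(\M[n]) \to \tilde{H}_d(\M[n], \M[n] \setminus e)$, is induced by the chain-level projection $\tilde{C}_d(\M[n]) \to \tilde{C}_d(\M[n])/\tilde{C}_d(\M[n] \setminus e)$ followed by the identification above. There is no real obstacle here; the only point requiring a moment of care is matching the sign in the induced boundary with the convention from Section~\ref{basic-sec}, which as shown above works out on the nose.
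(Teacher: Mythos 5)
Your proof is correct and follows exactly the same route as the paper: identify $\M[n]\setminus(\M[n]\setminus e)$ with $\{\{e\}\}\join\M[{[n]\setminus e}]$, compute the relative chain complex, and plug into the long exact sequence of the pair. The paper states only the key set-theoretic observation and leaves the bookkeeping implicit; you have simply spelled out that bookkeeping, including the sign check against the convention in Section~\ref{basic-sec}, which works out as you say.
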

  \begin{proof}
    Simply note that 
    $\M[n] \setminus (\M[n] \setminus e) = \{\{e\}\} \join
    \M[{[n]\setminus e}]$.
  \end{proof}
  We refer to this sequence as the {\em 0-$e$-2 sequence}.
  We will make use of this sequence when providing bounds on the
  homology in Section~\ref{boundsmatch-sec}.

  \subsection{Long exact sequence relating $\M[n] \setminus e$,
    $\M[n-2] \setminus e$, $\M[n-3]$, and $\M[n-5]$}
  \label{exseq0235-sec}

  Using an approach similar to the one in Section~\ref{exseq0356-sec},
  we construct a long exact sequence relating $\M[n] \setminus
  e$, $\M[n-2] \setminus e$, $\M[n-3]$, and $\M[n-5]$, where $e$ is
  any edge. The main benefit of this sequence is that it provides good
  bounds on the homology when combined with the sequence in
  Section~\ref{exseq0e2-sec}; see Section~\ref{boundsmatch-sec}.
  Since we will not make use of the homomorphisms in this exact
  sequence, we do not define them explicitly; the interested reader
  will note that they are straightforward, though a bit cumbersome, to
  derive from the proof.
  \begin{theorem}
    Let $n \ge 5$.
    Define 
    \[
    P_{d}^{n-5} = 
    \bigoplus_{s \neq t \in [4,n]}
    \langle 1s \wedge 2t \rangle \tensor \tilde{H}_{d}(\M[{[4,n]
    \setminus 
    \{s,t\}}])
    \]
    and
    \[
    Q_{d}^{n-2} = \bigoplus_{i=4}^n 
    \langle 3u \rangle \tensor \tilde{H}_{d}(\M[{[n] \setminus
    \{3,u\}}] \setminus 12).
    \]
    Then we have a long exact sequence
    \[
    \begin{CD}
      & & & & \cdots
      @>>>  
      Q_{d}^{n-2}  \\ 
      @>>>  
      \langle 13 \rangle \tensor \tilde{H}_{d-1}(\M[{[4,n]}]) \oplus
      P_{d-2}^{n-5}  
      @>>>  
      \tilde{H}_d(\M[n]\setminus 12)
       @>>>  
      Q_{d-1}^{n-2}  \\
      @>>>  
      \langle 13 \rangle \tensor \tilde{H}_{d-2}(\M[{[4,n]}]) \oplus
      P_{d-3}^{n-5}  
      @>>> 
      \cdots .
    \end{CD}
    \]
    \label{exseq0235-thm}
  \end{theorem}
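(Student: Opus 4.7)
The plan is to parallel the argument from Section~\ref{exseq0356-sec}, modified throughout to accommodate the deletion of the edge $12$. Let $Z \subseteq \M[n] \setminus 12$ be the subcomplex consisting of matchings in which vertex $3$ is not matched to any vertex in $[4,n]$. Then
\[
(\M[n] \setminus 12) \setminus Z \;=\; \bigcup_{u=4}^n \{\{3u\}\} \join \bigl(\M[{[n] \setminus \{3,u\}}] \setminus 12\bigr)
\]
is an antichain, so the same reasoning that proves Lemma~\ref{relative-lem} yields $\tilde{H}_d(\M[n] \setminus 12,\, Z) \cong Q_{d-1}^{n-2}$. Substituting this into the long exact sequence of the pair $(\M[n] \setminus 12,\, Z)$ produces the right half of the target sequence and reduces the theorem to establishing that
\[
\tilde{H}_d(Z) \;\cong\; \langle 13 \rangle \tensor \tilde{H}_{d-1}(\M[{[4,n]}]) \;\oplus\; P_{d-2}^{n-5}.
\]

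To compute $\tilde{H}_d(Z)$, I would filter $Z$ by the number of vertices of $\{1,2\}$ that are matched to $[4,n]$, obtaining $Z^{(0)} \subseteq Z^{(1)} \subseteq Z^{(2)} = Z$. Matchings in $Z^{(0)}$ have no crossings at all between $\{1,2,3\}$ and $[4,n]$, so $Z^{(0)} = (\M[{\{1,2,3\}}] \setminus 12) \join \M[{[4,n]}]$. Because $\M[{\{1,2,3\}}] \setminus 12$ is a discrete complex on the two points $\{13\}$ and $\{23\}$, the join formula gives $\tilde{H}_d(Z^{(0)}) \cong \langle 13 \rangle \tensor \tilde{H}_{d-1}(\M[{[4,n]}])$, with representative cycles of the form $(13 - 23) \wedge x$. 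The relative complex
\[
Z^{(1)} \setminus Z^{(0)} = \bigcup_{a \in \{1,2\}} \bigcup_{u \in [4,n]} \{\{au\}\} \join \M[{\{1,2,3\}\setminus \{a\}}] \join \M[{[4,n]\setminus \{u\}}]
\]
has each inner factor $\M[{\{1,2,3\}\setminus\{a\}}]$ equal to a single point, so every summand is contractible and $\tilde{H}_d(Z^{(1)}, Z^{(0)}) = 0$; consequently $\tilde{H}_d(Z^{(1)}) \cong \tilde{H}_d(Z^{(0)})$. Finally, $Z^{(2)} \setminus Z^{(1)} = \bigcup_{s \neq t \in [4,n]} \{\{1s, 2t\}\} \join \M[{[4,n] \setminus \{s,t\}}]$, which produces $\tilde{H}_d(Z^{(2)}, Z^{(1)}) \cong P_{d-2}^{n-5}$.

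To conclude, one splits the short exact sequence
\[
0 \to \tilde{C}_\bullet(Z^{(1)}) \to \tilde{C}_\bullet(Z^{(2)}) \to \tilde{C}_\bullet(Z^{(2)}, Z^{(1)}) \to 0
\]
by imitating Lemma~\ref{exseq0356-lem} (specialized to $a=1$, $b=2$, $c=3$): define a section
\[
\hat{\varphi}(1s \wedge 2t \tensor x) = \bigl(1s \wedge 2t + 13 \wedge (st - 2t) + 23 \wedge (1s - st)\bigr) \tensor x.
\]
Each added monomial uses at most one crossing between $\{1,2\}$ and $[4,n]$ and never matches vertex $3$ to a vertex in $[4,n]$, so the image lands in $Z^{(2)}$, and the projection to $\tilde{C}_\bullet(Z^{(2)}, Z^{(1)})$ returns $1s \wedge 2t \tensor x$. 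The main technical check, which I expect to be the central obstacle, is that $\hat{\varphi}$ commutes with the boundary operator; equivalently, that $1s \wedge 2t + 13 \wedge (st-2t) + 23 \wedge (1s-st)$ is a $1$-cycle in $Z$. This is a short explicit calculation that mirrors the corresponding verification in the proof of Lemma~\ref{exseq0356-lem}. Once the splitting is in place, $\tilde{H}_d(Z) \cong \langle 13 \rangle \tensor \tilde{H}_{d-1}(\M[{[4,n]}]) \oplus P_{d-2}^{n-5}$, and inserting this identification into the pair sequence for $(\M[n] \setminus 12,\, Z)$ produces the advertised long exact sequence.
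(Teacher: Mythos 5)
Your proposal is correct and mirrors the paper's proof almost step for step: you take $Z = \Delta^2_n$ (the subcomplex of $\M[n]\setminus 12$ in which vertex $3$ is not matched into $[4,n]$), identify $\tilde{H}_d(\M[n]\setminus 12, Z)\cong Q_{d-1}^{n-2}$, filter $Z$ by the number of crossings between $\{1,2\}$ and $[4,n]$, and split the resulting short exact sequence with the section $1s\wedge 2t \mapsto 1s\wedge 2t + 13\wedge(st-2t) + 23\wedge(1s-st)$, which after rearranging signs is exactly the paper's cycle $1s\wedge 2t + 2t\wedge 13 + 13\wedge st + st\wedge 23 + 23\wedge 1s$. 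The "central obstacle" you flag -- that this representative is a cycle in $Z$ -- is a one-line computation ($\partial$ of the five terms telescopes to zero), so no real obstacle remains.
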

  \begin{proof}
    Consider the long exact sequence for the pair $(\M[n] \setminus
    12,\Delta^2_n)$, where $\Delta^2_n$ is the complex obtained from
    $\M[n] \setminus 12$ by removing the elements $34, \ldots, 3n$. 
    Analogously to Lemma~\ref{relative-lem}, we have that
    $\tilde{H}_d(\M[n] \setminus 12, \Delta^2_n) \cong
    Q_{d-1}^{n-2}$.

    To settle the theorem, it suffices to prove that
    \[
    \tilde{H}_d(\Delta^2_n) \cong
    \langle 13 \rangle \tensor \tilde{H}_{d-1}(\M[{[4,n]}]) \oplus
    P_{d-2}^{n-5}. 
    \]
    To achieve this goal, define
    $\Delta_n^1$ to be the subcomplex of $\Delta^2_n$ obtained by
    removing all faces containing $\{1s,2t\}$ for some
    $s,t \in [4,n]$. 
    Analogously to Lemma~\ref{relative-lem}, we have that
    $\tilde{H}_d(\Delta^2_n,\Delta^1_n) \cong P_{d-2}^{n-5}$.
    A homomorphism $\varphi^*$ from $ P_{d-2}^{n-5}$ to $\Delta_n^2$
    is given by mapping $1s \wedge 2t$ to the cycle
    \[
    1s \wedge 2t + 2t \wedge 13 + 13 \wedge st + st \wedge 23 + 
    23 \wedge 1s.
    \]
    It is clear that the natural map back to $P_{d-2}^{n-5}$ has the
    property that $\varphi^*(1s \wedge 2t \tensor z)$ is mapped to 
    $1s \wedge 2t \tensor z$; hence we have a split exact sequence
    just as in 
    (\ref{delta2seq-eq}) in the proof of Lemma~\ref{exseq0356-lem}.
    This implies that
    \[
    \tilde{H}_d(\Delta^{2}_n) \cong 
    \tilde{H}_d(\Delta^{1}_n) \oplus
    P_{d-2}^{n-5},
    \]
    again as in the proof of Lemma~\ref{exseq0356-lem}, except
    that the complexes and groups are different.
    
    It remains to prove that $\tilde{H}_{d}(\Delta^1_n) \cong
    \langle 13 \rangle \tensor \tilde{H}_{d-1}(\M[{[4,n]}])$. 
    Let 
    $\Delta^0_n$ be the subcomplex of $\Delta^1_n$ obtained by 
    removing the elements $14, \ldots, 1n$ 
    and $24, \ldots, 2n$. Since
    \[
    \Delta^0_n = \{\emptyset, 13, 23\} \join \M[{[4,n]}],
    \]
    we obtain that
    $\tilde{H}_d(\Delta^0_n) \cong
    \langle 13\rangle  \tensor \tilde{H}_{d-1}(\M[{[4,n]}])$.
    Thus the only thing remaining is to prove that
    $\tilde{H}_d(\Delta^1_n)  \cong \tilde{H}_d(\Delta^0_n)$.
    Now,
    \[
    \Delta^1_n \setminus \Delta^0_n
    = \bigcup_{a=1}^2\bigcup_{u=4}^n 
    \{\{au\}\} \join \M[{\{3-a,3\}}] \join \M[{[4,n]
    \setminus \{u\}}], 
    \]
    which yields that
    \[
    \tilde{H}_d(\Delta^1_n, \Delta^0_n)
    \cong 
    \bigoplus_{a=1}^2\bigoplus_{u=4}^n  
    \langle au \rangle \tensor \tilde{H}_{d-1}(\M[{\{3-a,3\}}] \join
    \M[{[4,n] \setminus \{u\}}]) = 0;
    \]
    the homology of a cone vanishes.
  \end{proof}

  We refer to this sequence as the {\em 0-2-3-5 sequence}.

  \section{Bottom nonvanishing homology}
  \label{bottommatch-sec}

  We consider the bottom nonvanishing homology group 
  $\tilde{H}_{\nu_n}(\M[n];\Z)$, starting with the case $n \equiv 1
  \pmod{3}$  
  in Section~\ref{newboucproof-sec} and proceeding with the general
  case in Section~\ref{generalcase-sec}. 

  Before examining the different cases, we present a nice result due
  to {\shareshian} and Wachs about the structure of the bottom
  nonvanishing homology group of $\M[n]$. Using 
  the 0-3-5-6 sequence from Section~\ref{exseq0356-sec}, 
  we may provide a more streamlined proof for the case $n \equiv 2
  \pmod{3}$.

  Recall the concept of type introduced in Section~\ref{basic-sec}.
  \begin{lemma}[{\shareshian} \& Wachs \cite{ShWa}]
    For $k \in \{0,1,2\}$ and $r \ge 0$, the group
    $\tilde{H}_{k-1+r}(\M[2k+1+3r])$ is generated by
    cycles of type $\brackom{3}{1}^r \wedge \brackom{2k+1}{k}$.
    \label{generators-lem}
  \end{lemma}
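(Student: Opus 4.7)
The plan is to induct on $r$: assume the lemma for all pairs $(k', r')$ with $r' < r$ and every $k' \in \{0,1,2\}$. The base case $r = 0$ is immediate, since every cycle in $\tilde{H}_{k-1}(\M[2k+1])$ already lives on the full vertex set and hence is tautologically of type $\brackom{2k+1}{k}$.

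For the inductive step at $r \ge 1$, write $n = 2k+1+3r$ and $d = \nu_n = k-1+r$, and handle the cases $k \in \{0,1\}$ and $k = 2$ with different exact sequences. In the first case I would invoke the 0-3-4 sequence of Theorem~\ref{exseq034-thm}: a short check gives $d - 2 = k - 3 + r < \nu_{n-4}$, so Theorem~\ref{smallest-thm} forces $Q^{n-4}_{d-2} = 0$ and makes $\varphi^*: R^{n-3}_{d-1} \twoheadrightarrow \tilde{H}_d(\M[n])$ surjective. Every generator of $R^{n-3}_{d-1}$ is $au \otimes z$ with $z \in \tilde{H}_{\nu_{n-3}}(\M[{[3, n] \setminus \{u\}}])$, and because $n - 3 = 2k + 1 + 3(r-1)$ the induction hypothesis at $(k, r-1)$ lets me take $z$ to be a sum of cycles of type $\brackom{3}{1}^{r-1} \wedge \brackom{2k+1}{k}$. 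Since $\varphi(au \otimes z) = (au - 12) \wedge z$ and $au - 12$ is a $\brackom{3}{1}$-cycle on $\{1, 2, u\}$, the resulting cycle has the required type $\brackom{3}{1}^r \wedge \brackom{2k+1}{k}$.

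For $k = 2$ I would use the 0-3-5-6 sequence via Corollary~\ref{exseq0356-cor}, which realises $\tilde{H}_{\nu_n}(\M[n])$ as a quotient of $P^{n-5}_{d-2} \oplus Q^{n-3}_{d-1}$. On the $Q$-summand, the induction hypothesis at $(2, r-1)$ (valid since $n - 3 = 2\cdot 2 + 1 + 3(r-1)$) supplies cycles of type $\brackom{3}{1}^{r-1} \wedge \brackom{5}{2}$ in $\tilde{H}_{\nu_{n-3}}(\M[n-3])$, and wedging with $1c - 23$, a $\brackom{3}{1}$-cycle on $\{1, 2, 3\}$, produces cycles of type $\brackom{3}{1}^r \wedge \brackom{5}{2}$. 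On the $P$-summand, the induction hypothesis at $(1, r-1)$ (valid since $n - 5 = 2\cdot 1 + 1 + 3(r-1)$) supplies cycles of type $\brackom{3}{1}^{r-1} \wedge \brackom{3}{1} = \brackom{3}{1}^r$, and wedging with $\varphi(as \wedge bt)$, a $\brackom{5}{2}$-cycle on $\{1, 2, 3, s, t\}$, again yields cycles of the same target type $\brackom{3}{1}^r \wedge \brackom{5}{2}$.

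I expect the main obstacle to be the $P$-contribution in the 0-3-5-6 sequence: the cycle $\varphi(as \wedge bt)$ sits on five vertices and does not refine further into triangle pieces, so the $P$-summand can only match the claimed type when that type already carries a $\brackom{5}{2}$-factor. This is exactly why the 0-3-5-6 route works cleanly for $k = 2$ (i.e.\ $n \equiv 2 \pmod 3$, as the paper highlights), while for $k \in \{0, 1\}$ one must fall back on the 0-3-4 sequence and exploit the degree-based vanishing of its $Q$-term.
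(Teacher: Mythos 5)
Your proposal is correct and follows essentially the same route as the paper. For $k=2$ the argument is the one the paper gives: the surjection from Corollary~\ref{exseq0356-cor}, plus the observation that the $Q$-summand contributes $\brackom{3}{1}\wedge(\text{type at }(2,r-1))$ via $\varphi(1c)=1c-23$ while the $P$-summand contributes $\brackom{5}{2}\wedge(\text{type at }(1,r-1))$ via the five-vertex cycle $\varphi(as\wedge bt)$. The one place you go beyond the paper is that for $k\in\{0,1\}$ the paper simply cites Shareshian--Wachs; your degree count showing $Q^{n-4}_{d-2}=0$ by Theorem~\ref{smallest-thm} (so that $\varphi^*$ in the 0-3-4 sequence is onto, with image generated by $(au-12)\wedge z$) is exactly the straightforward proof the paper alludes to there, and you correctly identify why that degree argument fails at $k=2$ (one has $d-2=\nu_{n-4}$, not $d-2<\nu_{n-4}$), forcing the switch to the 0-3-5-6 sequence.
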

  \begin{proof}
    For $k \in \{0,1\}$,
    {\shareshian} and Wachs \cite[Lemmas 2.3 \&
    2.5]{ShWa} provided a straightforward proof based on the tail end of
    the 0-3-4 sequence in Section~\ref{exseq034-sec}.
    Assume that $k = 2$ and write $n(k,r) = 2k+1+3r$ and $d(k,r) =
    k-1+r$; recall (\ref{nd2kr-eq}). The case $r=0$ is trivially true;
    hence assume that $r \ge 1$. The tail end in
    Corollary~\ref{exseq0356-cor} becomes
    \[
    \begin{CD}
      P_{d(1,r-1)}^{n(1,r-1)} \oplus
      Q_{d(2,r-1)}^{n(2,r-1)} 
      @>\iota^*\circ \varphi^*>>  
      \tilde{H}_{d(2,r)}(\M[n(2,r)])
       @>>> 0.
    \end{CD}
    \]
    By properties of $\iota^*\circ \varphi^*$,
    it follows that $\tilde{H}_{d(2,r)}(\M[n(2,r)])$ is generated by
    cycles of type
    $\brackom{5}{2}\wedge \brackom{n(1,r-1)}{d(1,r-1)+1}$
    and 
    $\brackom{3}{1}\wedge \brackom{n(2,r-1)}{d(2,r-1)+1}$.
    Now, a cycle of type $\brackom{n(1,r-1)}{d(1,r-1)+1}$ is a sum of
    cycles of type $\brackom{3}{1}^r$, whereas induction on $r$ yields
    that a cycle of type $\brackom{n(2,r-1)}{d(2,r-1)+1}$ is a sum of
    cycles of type $\brackom{3}{1}^{r-1} \wedge \brackom{5}{2}$.
  \end{proof}
  Lemma~\ref{generators-lem} does not generalize to arbitrary $k$. For
  example, for $(k,r) = (6,4)$, we obtain $\tilde{H}_{9}(\M[25];\Z)$,
  which is infinite by Theorem~\ref{bouc-thm}. In particular, this
  group cannot be generated by cycles of type  
  $\brackom{3}{1}^4 \wedge \brackom{13}{6} \prec 
  \brackom{12}{4} \wedge \brackom{13}{6}$, as these cycles all have
  finite exponent dividing three; $\tilde{H}_{3}(\M[12];\Z)$ is finite
  of exponent three.

  \subsection{The case $n \equiv 1 \pmod{3}$}
  \label{newboucproof-sec}

  For $r \ge 0$, define
  \begin{eqnarray}
    \gamma_{3r} &=& (12-23) \wedge (45-56) \wedge (78-89) 
    \nonumber
    \\
    & & \wedge \cdots \wedge ((3r-2)(3r-1)-(3r-1)(3r));
    \label{gamman-eq}
  \end{eqnarray}
  this is a cycle in both $\tilde{C}_{r-1}(\M[3r]; \Z)$ 
  and $\tilde{C}_{r-1}(\M[3r+1]; \Z)$.
  By Lemma~\ref{generators-lem}, 
  $\tilde{H}_{r-1}(\M[3r+1]; \Z)$ is generated by 
  $\{\pi(\gamma_{3r}) : \pi \in \Symm{3r+1}\}$, where 
  the action of $\Symm{3r+1}$ on $\tilde{H}_{r-1}(\M[3r+1]; \Z)$
  is the one induced by the natural action on the underlying vertex
  set $[3r+1]$.

  Using the long exact 0-3-5-6 sequence in
  Section~\ref{exseq0356-sec}, we give a new proof of a celebrated
  result due to Bouc about the bottom
  nonvanishing homology of $\M[n]$ for $n \equiv 1 \pmod{3}$.
  \begin{theorem}[Bouc \cite{Bouc}]
    For $r \ge 2$, we have that
    $\tilde{H}_{r-1}(\M[3r+1];\Z) \cong \Z_3$.
    \label{bouctor-thm}
  \end{theorem}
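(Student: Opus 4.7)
The plan is to induct on $r \ge 2$, with base case $r = 2$ giving $\tilde{H}_1(\M[7];\Z) \cong \Z_3$ (recorded in Figure~\ref{matching-fig} and also readily derived from the 0-1-2 sequence of Theorem~\ref{exseq012-thm}).

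For the inductive step, I apply Corollary~\ref{exseq0356-cor} to $n = 3r+1$: since $n \equiv 1 \pmod 3$, the corollary tells us that $P_{\nu_n-2}^{n-5} = 0$, and the tail end of the 0-3-5-6 sequence collapses to
\[
R^{3r-5}_{r-3} \xrightarrow{\psi^*} Q^{3r-2}_{r-2} \xrightarrow{\iota^*\circ \varphi^*} \tilde{H}_{r-1}(\M[3r+1]; \Z) \to 0.
\]
All matching complexes appearing in $Q^{3r-2}_{r-2}$ and $R^{3r-5}_{r-3}$ are of the form $\M[n']$ with $n' \equiv 1 \pmod 3$ and $n' < n$, so the inductive hypothesis gives $\tilde{H}_{r-2}(\M[{[4,3r+1]}]) \cong \Z_3$ and $\tilde{H}_{r-3}(\M[{[4,3r+1]\setminus\{s,t,u\}}]) \cong \Z_3$ for every triple $(s,t,u)$. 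Consequently $Q^{3r-2}_{r-2} \cong \Z_3 \oplus \Z_3$ with generators $a_2 = 12 \otimes g$ and $a_3 = 13 \otimes g$ for $g = \gamma_{3r-3}$, and $\tilde{H}_{r-1}(\M[3r+1]; \Z)$ is a quotient of $\Z_3 \oplus \Z_3$.

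It remains to pin down the image of $\psi^*$ as one of the two nontrivial proper subgroups of $Q^{3r-2}_{r-2}$ fixed by the involution $a_2 \leftrightarrow a_3$. I will evaluate $\psi^*$ on the element $(14\wedge 25\wedge 36)\otimes z_{4,5,6}$, where $z_{4,5,6}$ is the copy of $\gamma_{3(r-2)}$ supported on the vertex set $[7,3r+1]$. The formula of Theorem~\ref{exseq0356-thm}, together with the vanishing of the $P$-summand, reduces the image to
\[
12 \otimes \bigl((46-56)\wedge z_{4,5,6}\bigr) + 13 \otimes \bigl((56-45)\wedge z_{4,5,6}\bigr).
\]
Both cycles in the second tensor slot have type $\brackom{3}{1}^{r-1}\wedge\brackom{1}{0}$, and by Lemma~\ref{generators-lem} each equals $\pm g$ in $\tilde{H}_{r-2}(\M[3r-2]) \cong \Z_3$. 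A sign analysis, based on the identity $(45-56)+(56-46)+(46-45)=0$ in chains and on the action of $\Symm{3}$ on $\{4,5,6\}$, identifies these signs explicitly, producing a nonzero image $\epsilon_2 a_2 + \epsilon_3 a_3$ with $\epsilon_2,\epsilon_3 \in \{\pm 1\}$. Since the transposition swapping vertex labels $2$ and $3$ is an automorphism of the entire 0-3-5-6 sequence that exchanges $a_2$ and $a_3$, the image of $\psi^*$ must be closed under this involution, and hence must coincide with one of the two order-three subgroups of $\Z_3 \oplus \Z_3$ fixed by it. The cokernel is therefore $\Z_3$, closing the induction.

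The principal obstacle is the sign bookkeeping inside $\tilde{H}_{r-2}(\M[3r-2]) \cong \Z_3$: one must simultaneously verify that the image of $\psi^*$ is nonzero (otherwise $\tilde{H}_{r-1}(\M[3r+1];\Z) \cong \Z_3 \oplus \Z_3$) and that it lies in a proper $\Symm{2}$-invariant subgroup (otherwise $\tilde{H}_{r-1}(\M[3r+1];\Z) = 0$). This requires a careful orientation computation using the explicit form of the generator $\gamma_{3r-3}$ and the simplicial chain structure of $\M[3r-2]$.
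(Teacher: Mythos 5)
Your overall strategy---induction on $r$ using the tail end of the 0--3--5--6 sequence from Corollary~\ref{exseq0356-cor}, reducing the problem to computing the image of $\psi^*$ inside $Q^{3r-2}_{r-2} \cong \Z_3\oplus\Z_3$---is exactly the paper's approach, and the target conclusion (cokernel $\cong\Z_3$) is right. However, there is a genuine logical gap in the inductive step. You establish (i) that $\psi^*$ maps one specific element $(14\wedge25\wedge36)\otimes z_{4,5,6}$ to a nonzero $\epsilon_2 a_2 + \epsilon_3 a_3$, and (ii) that the image of $\psi^*$ is invariant under the transposition $\tau$ swapping labels $2$ and $3$. From these two facts you conclude ``hence [the image] must coincide with one of the two order-three subgroups.'' That inference is false: a nonzero $\tau$-invariant subgroup of $\Z_3\oplus\Z_3$ containing $\epsilon_2 a_2 + \epsilon_3 a_3$ could just as well be all of $\Z_3\oplus\Z_3$, in which case $\tilde{H}_{r-1}(\M[3r+1];\Z)$ would vanish. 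You flag this yourself in your closing paragraph (``one must simultaneously verify \dots\ that it lies in a proper $\Symm{2}$-invariant subgroup''), but you never actually close it; a ``careful orientation computation'' on a single generator cannot do so, since the issue is the image of \emph{every} generator, not just one.

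The missing ingredient is the fact---derived in the paper from Lemma~\ref{generators-lem} by flipping two vertices in $\gamma_{3r-3}$---that the $\Symm{3r-2}$-action on the order-three group $\tilde{H}_{r-2}(\M[3r-2];\Z)$ is the sign representation. Writing $\psi^*(1s\wedge2t\wedge3u\otimes z) = 12\otimes\delta + 13\otimes\delta'$ with $\delta=(su-tu)\wedge z$ and $\delta'=(tu-st)\wedge z$, one has $\delta=(s,t,u)(\delta')$ as chains, and since the $3$-cycle $(s,t,u)$ is even it acts trivially on $\Z_3$, so $\delta=\delta'$ in homology. This forces the image of $\psi^*$ to lie inside the diagonal $(12+13)\otimes\Z_3$ \emph{for every} generator, which is what rules out surjectivity. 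You should also note that citing Figure~\ref{matching-fig} (computer data) for the base case $r=2$ undercuts the goal of a computer-free proof; the paper instead performs an explicit rank computation in the 0--3--5--6 sequence at $n=7$ (the $4\times4$ determinant equal to $3$), and the suggestion that the 0--1--2 sequence gives $\tilde{H}_1(\M[7];\Z)\cong\Z_3$ ``readily'' is not borne out, since it would require understanding a map $\Z^{36}\to\Z^{16}$.
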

  \begin{proof}
    By Corollary~\ref{exseq0356-cor}, we have the exact
    sequence
    \[
    \begin{CD}
      R_{r-3}^{3r-5}  
      @>\psi^*>>  
      Q_{r-2}^{3r-2} 
      @>\iota^*\circ \varphi^*>>  
      \tilde{H}_{r-1}(\M[3r+1])
       @>>> 0.
    \end{CD}
    \]
    For $r=2$, this becomes
    \[
    \begin{CD}
      \displaystyle{\bigoplus_{s,t,u}}\  \langle 1s \wedge 2t\wedge 3u
      \rangle 
      @>\psi^*>>  
      \displaystyle{\bigoplus_{c=2}^3}\ 
      \langle 1c \rangle \tensor \tilde{H}_{0}(\M[{[4,7]}])
      @>\iota^*\circ \varphi^*>>  
      \tilde{H}_{1}(\M[7])
      \longrightarrow  0,
    \end{CD}
    \]
    where the first direct sum ranges over all triples of distinct
    vertices $s,t,u \in [4,7]$.
    A basis for $\M[{[4,7]}]$ is given by $\{45-56,46-56\}$; hence
    a basis for $Q_{0}^{4}$ is given by $\{e_{25}, e_{26}, e_{35},
    e_{36}\}$, where
    $e_{cd} = 1c \tensor (4d-56)$.
    Now, 
    \[
    \psi^*(1s \wedge 2t \wedge 3u)
    = 12 \tensor (su-tu) + 
    13 \tensor (tu-st);
    \]
    apply Theorem~\ref{exseq0356-thm} and
    Corollary~\ref{exseq0356-cor}. 
    In particular, if $\{s,t,u\} = \{4,5,6\}$, then
    \begin{eqnarray*}
    \psi^*(1s \wedge 2t \wedge 37)
    &=& 12 \tensor (s7-t7) + 
    13 \tensor (t7-st) \\
    &=& 12 \tensor (tu-su) + 
    13 \tensor (su-st) \\
    &=& \psi^*(1t \wedge 2s \wedge 3u).
    \end{eqnarray*}
    Similarly, $\psi^*(1s \wedge 27 \wedge 3u) = 
    \psi^*(1u \wedge 2t \wedge 3s)$ and 
    $\psi^*(17 \wedge 2t \wedge 3u) = \psi^*(1s \wedge 2u \wedge 3t)$.
    Moreover, one easily checks that
    \[
     \psi^*(1s \wedge 2t \wedge 3u) +
     \psi^*(1t \wedge 2u \wedge 3s) +
     \psi^*(1u \wedge 2s \wedge 3t) = 0.
    \]
    In particular, the image under $\psi^*$ is generated by the four
    elements 
    \begin{eqnarray*}
      \psi^*(14 \wedge 25 \wedge 36) 
      &=&\! {12} \tensor (46-56) + {13} \tensor (56-45)
      = e_{26} - e_{35};\\
      \psi^*(14 \wedge 26 \wedge 35) 
      &=&\! {12} \tensor (45-56) + {13} \tensor (56-46)
      = e_{25} - e_{36};\\
      \psi^*(15 \wedge 24 \wedge 36) 
      &=&\! {12} \tensor (56-46) + {13} \tensor (46-45)\\
      &=& -e_{26} - e_{35} + e_{36};\\
      \psi^*(15 \wedge 26 \wedge 34)
      &=&\! {12} \tensor (45-46) + {13} \tensor (46-56) \\
      &=& e_{25} - e_{26} + e_{36}.
    \end{eqnarray*}
  Since 
  \[
  \det
  \left(
  \begin{array}{rrrr}
    0 & 1 & -1 & 0 \\
    1 & 0 & 0 & -1 \\
    0 & -1 & -1 & 1 \\
    1 & -1 & 0 & 1 
  \end{array}
  \right) = 3,
  \]
  it follows that $\tilde{H}_1(\M[7];\Z) \cong \Z_3$.
  Moreover, by Lemma~\ref{generators-lem} and symmetry, $\gamma_6 =
  (12-23)
  \wedge (45-56)$ must be a generator of $\tilde{H}_1(\M[7];\Z)$.

  For $r > 2$, assume by induction that
  $\tilde{H}_{r-2}(\M[3r-2];\Z)$ is a group of order three.
  Again by Lemma~\ref{generators-lem},
  this group is generated by any element of the form
  $\pi(\gamma_{3r-3})$, where 
  $\gamma_{3r-3}$ is defined as in (\ref{gamman-eq})  
  and $\pi \in \Symm{3r-2}$.
  Flipping $\pi(1)$ and $\pi(3)$ yields $-\pi(\gamma_{3r-3})$; hence
  the action of $\Symm{3r-2}$ on $\tilde{H}_{r-2}(\M[3r-2];\Z)$ is
  given by $\pi(z) = \sgn(\pi)\cdot z$.

  By induction, we have the following exact sequence:
  \[
  \begin{CD}
    R_{r-3}^{3r-5}  
    @>\psi^*>>  
    \langle 12 \rangle \tensor \Z_3 \oplus \langle 13 \rangle \tensor
    \Z_3
    @>\iota^*\circ \varphi^*>>  
    \tilde{H}_{r-1}(\M[3r+1])
    @>>> 0.
  \end{CD}
  \]
  Another application of 
  Theorem~\ref{exseq0356-thm} and
    Corollary~\ref{exseq0356-cor} yields that 
  \begin{eqnarray*}
  \psi^*(1s \wedge 2t \wedge 3u \tensor z) &=& 
  {12} \tensor (su-tu) \wedge z + {13}  \tensor (tu-st)
  \wedge z \\
  &=:& {12} \tensor \delta + {13}  \tensor \delta'.
  \end{eqnarray*}
  Note that $\delta = (s,t,u)(\delta') = \delta'$, 
  which implies that the image under $\psi^*$ is contained in 
  $({12}+{13})\tensor \Z_3$. Moreover,
  \[
  \psi^*(14\wedge 25\wedge 36 \tensor \gamma_{3r-6}^{(6)})  
  = 12 \tensor (46-56) \wedge \gamma_{3r-6}^{(6)}
  + 13 \tensor (56-45) \wedge \gamma_{3r-6}^{(6)},
  \]
  where $\gamma_{3r-6}^{(6)}$ is defined as in 
  (\ref{gamman-eq}) but with all
  elements shifted six steps up. This is nonzero; hence
  the image under $\psi^*$ is indeed equal to $({12}+{13})\tensor
  \Z_3$. We conclude 
  that $\tilde{H}_{r-1}(\M[3r+1];\Z) \cong \Z_3$.
  \end{proof}

  \subsection{The general case}
  \label{generalcase-sec}
    
  Bouc \cite{Bouc} proved that the exponent of the group 
  $\tilde{H}_{\nu_n}(\M[n]; \Z)$ divides 
  nine whenever $n = 3r + 3$ for some $r \ge 3$. Using the exact 0-3-4
  sequence in   
  Section~\ref{exseq034-sec}, {\shareshian} and Wachs
  extended and improved this result:
  \begin{theorem}[{\shareshian} and Wachs \cite{ShWa}]
    \label{ShWa-thm}
    For $n \in \{7,10,12,13\}$ and for $n
    \ge 15$, $\tilde{H}_{\nu_n}(\M[n]; \Z)$ is
    of the form $(\Z_3)^{e_n}$ for some $e_n \ge 1$.
    The torsion subgroup of $\tilde{H}_{\nu_n}(\M[n]; \Z)$
    is again an elementary $3$-group for $n \in \{9,11\}$ and  
    zero for $n \in \{1,2,3,4,5,6,8\}$. For the remaining case $n=14$, 
    $\tilde{H}_{\nu_n}(\M[n]; \Z)$ is a finite group with
    nonvanishing $3$-torsion. 
  \end{theorem}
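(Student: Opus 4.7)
The plan is to establish three properties of $\tilde{H}_{\nu_n}(\M[n];\Z)$ for $n$ in the indicated ranges: (a) it is a $3$-group, (b) it is annihilated by $3$, and (c) it is nontrivial. My approach is strong induction on $n$, organized around the 0-3-4 sequence (Theorem~\ref{exseq034-thm}) applied at $d = \nu_n$. The relevant segment reads
$$R_{\nu_n-1}^{n-3} \xrightarrow{\varphi^*} \tilde{H}_{\nu_n}(\M[n]) \xrightarrow{\kappa^*} Q_{\nu_n-2}^{n-4},$$
whose outer terms are assembled from $\tilde{H}_{\nu_n-1}(\M[n-3])$ and $\tilde{H}_{\nu_n-2}(\M[n-4])$. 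Small cases $n \le 14$ are handled directly from Figure~\ref{matching-fig} and Theorem~\ref{bouctor-thm}, and the inductive step splits by the residue of $n$ modulo $3$.

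The case $n \equiv 1 \pmod 3$ is Theorem~\ref{bouctor-thm}, which gives $\tilde{H}_{\nu_n}(\M[n]) \cong \Z_3$ outright. For $n = 3r+3 \equiv 0 \pmod 3$, one checks $\nu_{n-4} = \nu_{3r-1} = r-1 > r-2 = \nu_n - 2$, so by Theorem~\ref{smallest-thm} the group $Q_{\nu_n-2}^{n-4}$ vanishes. Hence $\kappa^*$ is the zero map, $\varphi^*$ is surjective, and $\tilde{H}_{\nu_n}(\M[n])$ is a quotient of $R_{\nu_n-1}^{n-3}$, a direct sum of copies of $\tilde{H}_{r-1}(\M[3r])$. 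By induction, this latter group is elementary abelian of exponent $3$, so the quotient is as well. Nontriviality is preserved by tracking the explicit generator $\gamma_{3r+3}$ of equation~(\ref{gamman-eq}) through $\varphi^*$.

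The hard case is $n = 3r+2 \equiv 2 \pmod 3$. Both outer terms of the 0-3-4 sequence are now elementary $3$-groups: $R_{\nu_n-1}^{n-3}$ by the inductive hypothesis applied to $\tilde{H}_{r-1}(\M[3r-1])$, and $Q_{\nu_n-2}^{n-4}$ because $\tilde{H}_{r-2}(\M[3r-2]) \cong \Z_3$ (Theorem~\ref{bouctor-thm}, since $3r - 2 \equiv 1 \pmod 3$). This alone only forces the exponent of the middle group to divide $9$. To promote to exponent $3$, I would combine the 0-3-4 sequence with the 0-1-2 sequence (Theorem~\ref{exseq012-thm}) for the same $\M[3r+2]$, which relates $\tilde{H}_r(\M[3r+2])$ to $\tilde{H}_r(\M[3r+1]) \cong \Z_3$ and to copies of $\tilde{H}_{r-1}(\M[3r])$. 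Using Lemma~\ref{generators-lem} with $k = 2$ to write every class as a sum of explicit cycles of type $\brackom{3}{1}^{r-1} \wedge \brackom{5}{2}$, I would chase each such generator through both sequences and exhibit an explicit chain whose boundary equals three times the generator.

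The principal obstacle is precisely this final step in the $n \equiv 2$ case, namely excluding an exponent-$9$ extension, together with correctly accounting for exceptional $n$. The value $n = 14$ must be excluded because $\tilde{H}_4(\M[14])$ carries $5$-torsion, and $n = 9, 11$ admit only a statement about the torsion subgroup because Theorem~\ref{bouc-thm} forces a nontrivial free part; all such exceptional values are verified directly from Figure~\ref{matching-fig}. Nontriviality $e_n \ge 1$ in the remaining cases follows by exhibiting a nonzero image of $\gamma_{3r}$, suitably relabeled for each residue class, and tracing its survival through the induction.
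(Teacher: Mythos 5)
This statement is cited by the paper rather than proved: it is attributed to Shareshian and Wachs and appears with no proof, and the paper even remarks that the only known proofs for $n\in\{9,11,12\}$ are computer-based. So your task here is not to reprove something the paper already proves, and your sketch should be judged on its own merits. Evaluated that way, it has two substantive gaps and one danger of circularity.

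First, the exponent bound. As you yourself acknowledge, for $n\equiv 2\pmod 3$ the 0-3-4 sequence sandwiched between two elementary $3$-groups only gives exponent dividing $9$. Promoting this to exponent exactly $3$ is precisely the technical heart of the Shareshian--Wachs argument, and ``chase each such generator through both sequences and exhibit an explicit chain whose boundary equals three times the generator'' is not an argument but a statement of the goal. The reason Lemma~\ref{generators-lem} combined with ``each generator of that type has exponent $3$'' does not come for free is that the exponent-$3$ property of a $\brackom{3}{1}$-factor in $\tilde{H}_0(\M[3];\Z)$ (which is free!) does not automatically propagate to a wedge of several such factors in a larger homology group; one must produce an explicit bounding chain, and Shareshian and Wachs spend considerable effort doing this. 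You have not closed this gap, and you cannot expect the reader to accept that it can be closed without a concrete construction.

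Second, and more seriously, nontriviality of $e_n$ is completely unaddressed. You write that it ``follows by exhibiting a nonzero image of $\gamma_{3r}$, suitably relabeled for each residue class, and tracing its survival through the induction,'' but showing that a specified cycle is not a boundary is the hard direction. Bouc's computation (reproduced in this paper as Theorem~\ref{bouctor-thm}) does this only for $n\equiv 1\pmod 3$, by an explicit determinant-$3$ matrix; there is no comparable short argument for $n\equiv 0,2\pmod 3$. Worse, the natural place to borrow nontriviality from inside this paper is Theorem~\ref{torsionallover-thm}, but its proof explicitly invokes the present Theorem~\ref{ShWa-thm} to establish that $\gamma_{3r}$ has exponent $3$ in $\tilde{H}_{r-1}(\M[3r];\Z)$, so using it here would be circular. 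There is also a smaller issue with your base cases: for $n=9$ the group $\tilde{H}_1(\M[6];\Z)\cong\Z^{16}$ feeding in through $R$ is free, not a $3$-group, so the ``quotient of a $3$-group'' step of your $n\equiv 0\pmod 3$ induction only kicks in at $n=15$ once $\tilde{H}_3(\M[12];\Z)\cong\Z_3^{56}$ has been supplied externally (by computer, as the paper notes), and $n\in\{9,11,12,14\}$ all need to be treated individually. In short: your framework (0-3-4 sequence, induction on $n$, explicit generating cycles) is the right starting point and is in the spirit of Shareshian and Wachs, but the two load-bearing steps --- the exponent reduction from $9$ to $3$, and nontriviality --- are missing, and filling them in is the actual content of their theorem.
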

  The only existing proofs for the cases $n \in \{9,11,12\}$ are
  computer-based. Our hope is that one may exploit
  properties of the exact sequences in this paper to find a proof
  without computer assistance. 

  By Theorem~\ref{bouctor-thm}, $e_{3r+1} = 1$ whenever
  $r \ge 2$. 
  In Section~\ref{highermatch-sec}, we show that
  $e_{3r+3}$ is bounded by a polynomial of degree $3$ and that
  $e_{3r+5}$ is bounded by a polynomial of degree $6$.

  \begin{corollary}
    For $n = 1$ and for $n \ge 3$, the  group 
    $\tilde{H}_{\nu_n}(\M[n]; \Z)$ is nonzero.
    In particular, the connectivity degree of $\M[n]$ 
    equals  $\nu_n-1$. 
    \label{conndepth-cor}
  \end{corollary}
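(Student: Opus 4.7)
The plan is to reduce the corollary to the nonvanishing of $\tilde{H}_{\nu_n}(\M[n];\Z)$ in every listed case and then invoke the connectivity bound of Theorem~\ref{smallest-thm}.

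For the nonvanishing step, I would partition the admissible values of $n$ into three groups. First, $n = 1$ is special: $\M[1] = \{\emptyset\}$ and $\nu_1 = -1$, so $\tilde{H}_{-1}(\M[1];\Z) \cong \Z$ by direct inspection. Second, for $n \equiv 1 \pmod{3}$ with $n \ge 7$, Theorem~\ref{bouctor-thm} gives $\tilde{H}_{\nu_n}(\M[n];\Z) \cong \Z_3$. Third, for the remaining $n \ge 3$: whenever $n \ge 15$ or $n \in \{10,12,13,14\}$, Theorem~\ref{ShWa-thm} produces nontrivial $3$-torsion, and thus nonvanishing; the remaining small cases $n \in \{3,4,5,6,8,9,11\}$ I would dispatch by translating $d = \nu_n$ into the parameters $(k,r)$ via the transformation \eqref{nd2kr-eq} and checking in each case that $k \ge \binom{r}{2}$ and $r \ge 0$, so that Bouc's rational formula (Theorem~\ref{bouc-thm}) forces $\tilde{H}_{\nu_n}(\M[n];\Q)$, and therefore the integral group, to be nonzero. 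Alternatively, for these small values one can simply read the answer off Figure~\ref{matching-fig}.

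Once the nonvanishing is in hand, the connectivity statement follows formally: Theorem~\ref{smallest-thm} guarantees that $\M[n]$ is $(\nu_n - 1)$-connected, while the nonvanishing of $\tilde{H}_{\nu_n}(\M[n];\Z)$ prevents the complex from being $\nu_n$-connected (since a $\nu_n$-connected CW complex has vanishing reduced homology through degree $\nu_n$). Hence the connectivity degree is exactly $\nu_n - 1$.

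No genuine obstacle arises here: the corollary is essentially bookkeeping, assembling Theorems~\ref{smallest-thm}, \ref{bouc-thm}, \ref{bouctor-thm}, and \ref{ShWa-thm}. The only mild care needed is to ensure that the case analysis accounts for every $n = 1$ and $n \ge 3$, since Theorem~\ref{ShWa-thm} by itself does not certify nonvanishing for the handful of small $n$ where it merely rules out torsion.
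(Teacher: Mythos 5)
Your proof is correct and matches what the paper implicitly does: the corollary is stated without a written proof, as a bookkeeping assembly of Theorems~\ref{smallest-thm}, \ref{bouctor-thm}, and \ref{ShWa-thm}, with the handful of small cases $n \in \{3,4,5,6,8,9,11\}$ verified via Bouc's rational formula (Theorem~\ref{bouc-thm}, noting $k \ge \binom{r}{2}$ in each case) or by direct inspection. You correctly observe that Theorem~\ref{ShWa-thm} alone does not certify nonvanishing for these small $n$, and you supply the missing argument, which is precisely the care the corollary requires.
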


  For $n = 14$, the following is known:
  \begin{theorem}[Jonsson \cite{m14}]
    $\tilde{H}_{4}(\M[14]; \Z)$ is a finite nontrivial group of
    exponent a multiple of $15$.
    \label{m14-thm}
  \end{theorem}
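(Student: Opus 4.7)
The plan is to decompose the statement into three independent claims---finiteness of the group, divisibility of its exponent by $3$, and divisibility by $5$---and to establish each in turn. The first two are immediate consequences of results already developed in the paper; the third is the crux and relies on external input.

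For finiteness, I would invoke Bouc's rational formula (Theorem~\ref{bouc-thm}). Under the bijection \eqref{nd2kr-eq}, the pair $(n,d) = (14,4)$ corresponds to $(k,r) = (2,3)$, and since $\binom{r}{2} = 3 > 2 = k$ the hypothesis of Theorem~\ref{bouc-thm} fails, giving $\tilde{H}_{4}(\M[14]; \Q) = 0$. Because $\M[14]$ is a finite simplicial complex, its integral homology is finitely generated, and vanishing of the rational part forces $\tilde{H}_{4}(\M[14]; \Z)$ to be finite. For nontriviality and divisibility of the exponent by $3$, I would apply Theorem~\ref{nonvan3intro-thm} directly: for $n=14$ one computes $\lceil (n-4)/3 \rceil = 4$ and $\lfloor (n-6)/2 \rfloor = 4$, so the single integer $d = 4$ lies in the allowed interval, and the theorem furnishes a nontrivial element of order $3$.

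The hard part is the existence of $5$-torsion, for which I would rely on the result of Andersen~\cite{Andersen}. The obstruction is fundamental: Theorem~\ref{p5intro-thm} asserts that $\tilde{H}_d(\M[n]; \Z)$ is a $3$-group only in the range $d \le \lfloor (2n-9)/5 \rfloor$, which for $n=14$ yields $d \le 3$, so the value $d = 4$ falls just outside; indeed Theorem~\ref{further5intro-thm} singles out precisely this case ($n = 5q+4$, $d = 2q$ with $q = 2$) as critical for $5$-torsion. Moreover, none of the five long exact sequences of this paper can propagate $5$-torsion into $\tilde{H}_{4}(\M[14];\Z)$ from smaller matching complexes, whose homology in the relevant ranges is either free abelian or an elementary $3$-group. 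I would therefore extract from \cite{Andersen} either an explicit cycle class or an $\Z_5[\Symm{14}]$-module computation detecting a nonzero element of order $5$, and verify that it represents a nontrivial homology class in $\tilde{H}_{4}(\M[14];\Z)$. Combined with finiteness and divisibility by $3$, this shows that the exponent is divisible by both $3$ and $5$, hence by $15$.
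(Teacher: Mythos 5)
The paper does not actually contain a proof of this theorem: the statement is attributed to the companion preprint \cite{m14}, and all the paper does here is cite it. So there is no in-paper argument to compare against, and the question becomes whether your decomposition faithfully reconstructs what \cite{m14} must do.

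Your split into finiteness, $3$-torsion, and $5$-torsion is the right one, and the first two parts are correctly grounded in results that genuinely precede Theorem~\ref{m14-thm}. Finiteness: under \eqref{nd2kr-eq}, $(n,d)=(14,4)$ gives $(k,r)=(2,3)$ and $\binom{3}{2}=3>2$, so Theorem~\ref{bouc-thm} kills the rational homology, and finite generation does the rest. $3$-torsion: for $(k,r)=(2,3)$ this is covered by Theorem~\ref{torsionallover-thm} (or already by the Shareshian--Wachs result quoted as Theorem~\ref{ShWa-thm}, which explicitly says $\tilde{H}_4(\M[14];\Z)$ is finite with $3$-torsion); citing the introductory Theorem~\ref{nonvan3intro-thm} amounts to the same thing, and there is no circularity.

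The $5$-torsion, however, is the entire content of the theorem and of \cite{m14}, and your treatment of it is a plan rather than a proof: ``I would therefore extract from \cite{Andersen} either an explicit cycle class or a $\Z_5[\Symm{14}]$-module computation'' names the right external source (the one \cite{m14} itself relies on), but carrying out the transfer from Andersen's determinantal-ring computations to a nonzero class of order $5$ in $\tilde{H}_4(\M[14];\Z)$ is precisely the nontrivial work that \cite{m14} does and that you leave unexecuted. Two small inaccuracies worth noting: Theorem~\ref{p5intro-thm} asserts the group \emph{is} a $3$-group when $d\le\lfloor(2n-9)/5\rfloor$, not that this is the only regime where it is one; and Theorem~\ref{further5intro-thm} is stated for $q\ge 3$, so it does not actually apply to $q=2$ (i.e.\ $n=14$) -- it is the base case $n=14$ that feeds \emph{into} that theorem, not the other way around. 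Since the paper itself offloads the proof to \cite{m14}, your proposal mirrors the paper's treatment: correct skeleton, correct references, but the crucial $5$-torsion computation is a black box.
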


  \section{Higher-degree homology}
  \label{generalmatch-sec}

  In Section~\ref{highermatch-sec}, we detect $3$-torsion in
  higher-degree homology groups of $\M[n]$. 
  In Section~\ref{pnot3-sec}, we 
  demonstrate that whenever the degree falls within a given interval,
  the whole homology group is a $3$-group.
  We discuss the situation outside this interval in
  Section~\ref{further5-sec}, providing some loose evidence for the
  existence of large intervals with $5$-torsion.
  In Section~\ref{boundsmatch-sec}, we proceed with
  upper bounds on the dimension of the homology over $\Z_3$.

  \subsection{$3$-torsion in higher-degree homology groups}
  \label{highermatch-sec}

  This section builds on work previously published in the author's thesis
  \cite{thesis}. 
  First, let us state an elementary but useful result; the proof is
  straightforward.
  \begin{lemma}
    Let $k \ge 1$ and let $G$ be a graph on $2k$ vertices. Then
    $\M(G)$ admits a collapse to a complex of dimension at most
    $k-2$.
    \label{evencoll-lem}
  \end{lemma}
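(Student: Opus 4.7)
The plan is to exhibit an explicit sequence of elementary collapses that removes every top-dimensional face of $\M(G)$, leaving a subcomplex of dimension at most $k-2$. Note first that any face of $\M(G)$ of dimension $k-1$ corresponds to a matching of $G$ with $k$ edges; since $G$ has exactly $2k$ vertices, such a matching is necessarily perfect. Thus it suffices to pair each perfect matching of $G$ with a suitable free face and collapse.

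Fix a vertex $v_0$ of $G$. For every perfect matching $\sigma \in \M(G)$, let $e^{*}(\sigma)$ denote the unique edge of $\sigma$ incident to $v_0$, and set $\tau(\sigma) = \sigma \setminus \{e^{*}(\sigma)\}$. The first step is to verify that $\tau(\sigma)$ is a free face of $\sigma$ in $\M(G)$: the only vertices left unmatched by $\tau(\sigma)$ are $v_0$ and the neighbor $v_0'$ of $v_0$ under $e^{*}(\sigma)$, so the only matching of $G$ strictly containing $\tau(\sigma)$ is $\tau(\sigma) \cup \{v_0 v_0'\} = \sigma$. Hence $\sigma$ is the unique coface of $\tau(\sigma)$ in $\M(G)$.

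Next, I would check that the assignment $\sigma \mapsto \tau(\sigma)$ is injective: if $\tau(\sigma) = \tau(\sigma')$, then both $\sigma$ and $\sigma'$ are recovered by adjoining the unique edge $v_0 v_0'$ completing $\tau$ to a perfect matching, forcing $\sigma = \sigma'$. Consequently, the collection of pairs $\{(\tau(\sigma), \sigma)\}_\sigma$ is a family of pairwise disjoint face pairs in $\M(G)$, each consisting of a free face and its unique coface.

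The final step is to perform the collapses in any order. Since the pairs are disjoint and each $\tau(\sigma)$ has $\sigma$ as its sole coface in the ambient complex $\M(G)$, removing any previously collapsed pair $(\tau(\sigma'), \sigma')$ with $\sigma' \neq \sigma$ does not affect the fact that $\tau(\sigma)$ remains a free face whose unique coface is still $\sigma$. After exhausting all perfect matchings, every $(k-1)$-dimensional face of $\M(G)$ has been removed, and the resulting subcomplex has dimension at most $k-2$, as required. The only mildly delicate point is the free-face verification in the first step, but this is essentially immediate from the counting of unmatched vertices.
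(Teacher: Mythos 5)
Your proof is correct, and since the paper omits the proof with the remark that it is ``straightforward,'' your argument is precisely the kind of elementary collapsing argument the author has in mind: the $(k-1)$-faces are exactly the perfect matchings, fixing a vertex $v_0$ gives each perfect matching $\sigma$ a canonical free facet $\sigma \setminus \{e^*(\sigma)\}$, the map $\sigma \mapsto \tau(\sigma)$ is injective, and the resulting disjoint free pairs can be collapsed in any order to eliminate all top-dimensional faces.
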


  Let $k_0 \ge 0$ and let $\mathcal{G} = \{ G_k : k \ge k_0\}$  
  be a family of graphs such that the following conditions hold:
  \begin{itemize}
  \item
    For each $k \ge k_0$, the vertex set of $G_k$ is $[2k+1]$.
  \item 
    For each $k > k_0$ and for each vertex $s$ such that $1s$ is an
    edge in $G_k$, the induced subgraph $G_k([2k+1]
    \setminus \{1,s\})$ is
    isomorphic to $G_{k-1}$.
  \end{itemize}
  We say that such a family is {\em compatible}.
  \begin{proposition}
    In each of the following three cases, $\mathcal{G} =
    \{G_k :  k \ge k_0\}$ is a compatible family: 
    \begin{itemize}
    \item[(1)]
      $G_{k} = K_{2k+1}$ for all $k$.
    \item[(2)]
      $G_{k} = K_{k+1,k}$ for all $k$, where
      $K_{k+1,k}$ is the complete bipartite graph with blocks $[k+1]$
      and 
      $[k+2, 2k+1]$.
    \item[(3)]
      $G_{k} = K_{2k+1} \setminus \{23, 45, 67,
      \ldots, 2k(2k+1)\}$ for all $k$.
    \end{itemize}
    \label{compatible-prop}
  \end{proposition}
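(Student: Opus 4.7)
The plan is to verify the two conditions of compatibility separately for each of the three listed families. The first condition, that the vertex set of $G_k$ is $[2k+1]$, is immediate in all three cases from the definitions, so the work is entirely in checking the second condition: that for every $s$ with $1s \in E(G_k)$, the induced subgraph on $[2k+1]\setminus\{1,s\}$ is isomorphic to $G_{k-1}$. I would handle the three cases in order of increasing subtlety.

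For case (1) there is nothing to do: every induced subgraph of $K_{2k+1}$ on $2k-1$ vertices is $K_{2k-1} = G_{k-1}$. For case (2), I would first observe that $1 \in [k+1]$, so $1s$ is an edge of $K_{k+1,k}$ exactly when $s \in [k+2,2k+1]$. Deleting $1$ and such an $s$ leaves blocks $[2,k+1]$ and $[k+2,2k+1]\setminus\{s\}$, of sizes $k$ and $k-1$ respectively; the induced subgraph is complete bipartite with these blocks, hence isomorphic to $K_{k,k-1} = G_{k-1}$ by any bijection sending the blocks to $[k]$ and $[k+1,2k-1]$.

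Case (3) is the one that requires slightly more care, and is the main (mild) obstacle. Writing $M_k = \{23, 45, \ldots, 2k(2k+1)\}$, I note that $M_k$ is a perfect matching on $[2, 2k+1]$ and does not touch vertex $1$; hence $1s$ is an edge of $G_k$ for every $s \in [2,2k+1]$. Deleting vertex $s$ removes exactly one edge of $M_k$, namely the unique edge of $M_k$ incident to $s$, while leaving the other $k-1$ edges of $M_k$ untouched. Therefore $G_k([2k+1]\setminus\{1,s\})$ is $K_{2k-1}$ with a matching of $k-1$ edges removed, and the unique vertex of $[2k+1]\setminus\{1,s\}$ not covered by this matching is the $M_k$-partner $s'$ of $s$. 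On the other hand, $G_{k-1} = K_{2k-1} \setminus M_{k-1}$ is also $K_{2k-1}$ with $k-1$ matching edges removed, with vertex $1$ as the unique uncovered vertex. Any bijection $[2k+1]\setminus\{1,s\} \to [2k-1]$ sending $s'$ to $1$ and sending the $k-1$ remaining $M_k$-edges to the $k-1$ edges of $M_{k-1}$ (which exists, since both sides are matchings of the same size on the same number of vertices with one distinguished uncovered vertex) gives the required isomorphism.
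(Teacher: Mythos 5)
Your proof is correct and takes the same (only) route as the paper, which simply declares each case ``immediate''; you are just spelling out the three easy verifications, and case~(3) is handled correctly by observing that both $G_k([2k+1]\setminus\{1,s\})$ and $G_{k-1}$ are $K_{2k-1}$ with a $(k-1)$-edge near-perfect matching deleted, hence isomorphic.
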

  \begin{proof}
    It suffices to prove that $G_{k}([2k+1] \setminus \{1,s\})$ 
    is isomorphic to 
    $G_{k-1}$ whenever $1s$ is an edge in $G_{k}$ and $k > k_0$. 
    This is immediate in all three cases.
  \end{proof}
  
  Now, fix $k_0,n,d \ge 0$.
  Let $\mathcal{G} = \{G_{k} :  k \ge k_0\}$ be a family of
  compatible graphs and let $\gamma$ be an element
  in $\tilde{H}_{d-1}(\M[n];\Z)$, hence a cycle of type
  $\brackom{n}{d}$. For each $k \ge k_0$, define a map
  \[
  \left\{
  \begin{array}{l}
    \theta_k : \tilde{H}_{k-1}(\M(G_k);\Z)
    \rightarrow \tilde{H}_{k-1+d}(\M[2k+1+n];\Z) \\
    \theta_k(z) =  z \wedge \gamma^{(2k+1)},
  \end{array}
  \right.
  \]
  where we obtain $\gamma^{(2k+1)}$ from
  $\gamma$ by replacing each occurrence of the vertex $i$ with 
  $i+2k+1$ for every $i \in [n]$. Note that 
  $\tilde{H}_{k-1}(\M(G_k);\Z)$ is the top homology group of
  $\M(G_k)$ (provided $G_k$ contains matchings of size $k$).
  For any prime $p$, we have that
  $\theta_k$ induces a homomorphism
  \[
  \theta_{k} \tensor_\Z \iota_p : \tilde{H}_{k-1}(\M(G_{k});\Z)
  \tensor_\Z \Z_p \rightarrow
  \tilde{H}_{k-1+d}(\M[2k+1+n];\Z) \tensor_\Z
  \Z_p,
  \]
  where $\iota_p : \Z_p \rightarrow \Z_p$ is the identity.
  \begin{theorem}
    With notation and assumptions as above, 
    if $\theta_{k_0} \tensor_\Z \iota_p$ is a monomorphism, 
    then $\theta_{k} \tensor_\Z \iota_p$ is a monomorphism for each
    $k \ge k_0$.
    If, in addition, the exponent of $\gamma$ 
    in $\tilde{H}_{d-1}(\M[n];\Z)$ is $p$, then
    we have a monomorphism
    \[
    \left\{
    \begin{array}{l}
      \hat{\theta}_{k} : \tilde{H}_{k-1}(\M(G_{k});\Z)
      \tensor_\Z \Z_p \rightarrow
      \tilde{H}_{k-1+d}(\M[2k+1+n];\Z) \\
      \hat{\theta}_k(z \tensor_\Z \lambda) = \theta_k(\lambda z)
      = \lambda z \wedge \gamma^{(2k+1)}
    \end{array}
    \right.
    \]
    for each $k \ge k_0$. In particular, the group
    $\tilde{H}_{k-1+d}(\M[2k+1+n];\Z)$ contains $p$-torsion of rank 
    at least the rank of $\tilde{H}_{k-1}(\M(G_{k});\Z)$.
    \label{torallovergen-thm}
  \end{theorem}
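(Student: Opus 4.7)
The plan is induction on $k \ge k_0$, the base case $k = k_0$ being the hypothesis. The inductive step applies the 0-1-2 sequence (Theorem~\ref{exseq012-thm}, or its obvious analog for an arbitrary graph) twice: first in $\M[2k+1+n]$ to decompose $\theta_k(z)$ into pieces of the form $\theta_{k-1}(\tilde z_s)$, and then in $\M(G_k)$ itself to recover $z$ from the vanishing of those pieces.

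For the first application, assume $\theta_k(z) \otimes 1 = 0$ and write $z = z_* + \sum_{s\,:\,1s \in E(G_k)} 1s \wedge z_s$ at the chain level, where $z_*$ avoids vertex $1$ and $z_s$ is a chain in $\M(G_k \setminus \{1,s\})$. Because $\gamma^{(2k+1)}$ avoids vertex $1$, the projection $\omega$ of Theorem~\ref{exseq012-thm} applied to $\M[2k+1+n]$ sends $[\theta_k(z)]$ to $\sum_s 1s \otimes [z_s \wedge \gamma^{(2k+1)}]$. The compatibility $G_k([2k+1]\setminus\{1,s\}) \cong G_{k-1}$ combined with the shift $[2k+2, 2k+1+n] \to [2k, 2k-1+n]$ identifies each $z_s \wedge \gamma^{(2k+1)}$ with $\theta_{k-1}(\tilde z_s)$ for a corresponding cycle $\tilde z_s$ in $\M(G_{k-1})$. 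Independence of the direct-sum components yields $\theta_{k-1}(\tilde z_s) \otimes 1 = 0$ for every $s$; the inductive hypothesis then forces $\tilde z_s \otimes 1 = 0$ in $\tilde{H}_{k-2}(\M(G_{k-1});\Z) \otimes \Z_p$.

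For the second application, use the analogous 0-1-2 sequence at vertex $1$ for $\M(G_k)$, with $\Z_p$ coefficients (obtained by tensoring the short exact sequence of free chain complexes with $\Z_p$). Since $\M(G_k \setminus \{1\})$ has $2k$ vertices, Lemma~\ref{evencoll-lem} yields $\tilde{H}_{k-1}(\M(G_k \setminus \{1\});\Z_p) = 0$, so we obtain an injection
\[
\tilde{H}_{k-1}(\M(G_k);\Z_p) \hookrightarrow \bigoplus_{s\,:\,1s \in E(G_k)} \tilde{H}_{k-2}(\M(G_k \setminus \{1,s\});\Z_p).
\]
The universal coefficient injection $\tilde{H}_{k-1}(\M(G_k);\Z) \otimes \Z_p \hookrightarrow \tilde{H}_{k-1}(\M(G_k);\Z_p)$ carries $z \otimes 1$ to a class whose image in the displayed direct sum is $\sum_s 1s \otimes [z_s]_{\Z_p}$, and this vanishes because each $[z_s]_{\Z_p}$ is the image of $\tilde z_s \otimes 1 = 0$ under the universal coefficient map for $\M(G_k \setminus \{1,s\})$. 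Hence $z \otimes 1 = 0$, closing the induction.

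The second statement is a short epilogue: $p \gamma = 0$ in homology gives $p \gamma^{(2k+1)} = \partial c$ for some chain $c$, so $p \theta_k(z) = z \wedge \partial c = \pm \partial(z \wedge c)$ is a boundary, and $\theta_k$ descends to the claimed $\hat\theta_k$ on $\tilde{H} \otimes \Z_p$. Injectivity of $\hat\theta_k$ is immediate from the first part, since $\hat\theta_k(z \otimes \lambda) = \theta_k(\lambda z) = 0$ forces $\lambda z \otimes 1 = 0$, i.e.\ $z \otimes \lambda = 0$. The image of $\hat\theta_k$ is then an elementary $p$-subgroup of $\tilde{H}_{k-1+d}(\M[2k+1+n];\Z)$ of $\Z_p$-rank equal to that of $\tilde{H}_{k-1}(\M(G_k);\Z) \otimes \Z_p$, which bounds the $\Z$-rank of $\tilde{H}_{k-1}(\M(G_k);\Z)$ from below. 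I expect the main obstacle to be the careful shuttle between $\tilde{H} \otimes \Z_p$ and $\tilde{H}(\cdot;\Z_p)$ via the universal coefficient theorem in the final step of the induction; everything else is essentially bookkeeping once the two 0-1-2 sequences are lined up.
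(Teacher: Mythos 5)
Your argument is correct and runs along essentially the same lines as the paper: induction on $k$, the 0-1-2 sequence applied both to $\M[2k{+}1{+}n]$ and to $\M(G_k)$, Lemma~\ref{evencoll-lem} to kill $\tilde H_{k-1}(\M(G_k\setminus\{1\}))$, the compatibility isomorphism to set up the inductive step, and the same boundary computation to get well-definedness of $\hat\theta_k$. The only divergence is cosmetic, in how the passage from $\Z$ to $\Z_p$ is handled when closing the induction: you shuttle through $\Z_p$-coefficient homology and the universal coefficient theorem, whereas the paper observes directly that $\tilde H_{k-1}(\M(G_k);\Z)$, $P_{k-2}(G_k)$, and $\tilde H_{k-2}(\M(G_k\setminus\{1\});\Z)$ are all torsion-free (top homology of complexes of the relevant dimensions), so $\hat\omega\tensor\iota_p$ stays injective and the commuting square forces $\theta_k\tensor\iota_p$ to be a monomorphism.
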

  \begin{proof}
    To prove the first part of the theorem, we use induction on $k$;
    the base case $k=k_0$ is true by assumption.
    Assume that $k > k_0$ and
    consider the head end of
    the long exact sequence 
    for the pair $(\M(G_{k}),\M(G_k \setminus \{1\}))$,
    where $G_k \setminus \{1\} = G_k([2k+1] \setminus \{1\})$:
    \[
    \begin{CD}
      & & 0 @>>> \tilde{H}_{k-1}(\M(G_k \setminus \{1\});\Z) \\
      \longrightarrow \tilde{H}_{k-1}(\M(G_{k});\Z)
      @>\hat{\omega} >> P_{k-2}(G_k) @>>>
      \tilde{H}_{k-2}(\M(G_k \setminus \{1\});\Z).
    \end{CD}
  \]
  Here, 
  \[
  P_{k-2}(G_k) = \bigoplus_{s : 1s \in G_{k}}
  \langle 1s \rangle \tensor \tilde{H}_{k-2}(\M(G_{k} \setminus
  \{1,s\});\Z)
  \]
  and  $\hat{\omega}$ is defined in the natural manner.

  Now, the group $\tilde{H}_{k-1}(\M(G_k \setminus \{1\});\Z)$ is
  zero by Lemma~\ref{evencoll-lem}.
  As a consequence, $\hat{\omega}$ is a
  monomorphism. Moreover, all groups in the second row of the above
  sequence are 
  torsion-free. Namely, the dimensions of $\M(G_k)$ 
  and $\M(G_k \setminus \{1,s\})$ are at most $k-1$ and $k-2$,
  respectively, and Lemma~\ref{evencoll-lem} 
  yields that $\M(G_k \setminus \{1\})$ is
  homotopy equivalent to a 
  complex of dimension at most $k-2$. It follows that the induced
  homomorphism 
  \[
  \hat{\omega} \tensor \iota_p :
  \tilde{H}_{k-1}(\M(G_k);\Z) \tensor \Z_p
  \rightarrow P_{k-2}(G_k) \tensor \Z_p
  \]
  remains a monomorphism.

  Now, consider the following diagram:
  \[
  \begin{CD}
    \tilde{H}_{k-1}(\M(G_k);\Z) \tensor \Z_p
    @>{\hat{\omega}\tensor \iota_p}>> P_{k-2}(G_k) \tensor \Z_p \\
    @V{\theta_k \tensor \iota_p}VV   
    @V{\theta^\oplus_{k-1}\tensor \iota_p}VV \\
    \tilde{H}_{k-1+d}(\M[2k+1+n];\Z) \tensor \Z_p
    @>{\omega \tensor \iota_p}>>
    P_{k-2+d}^{2k-1+n} \tensor \Z_p.
  \end{CD}
  \]
  Here, 
  \[
  P_{k-2+d}^{2k-1+n} = \bigoplus_{s=2}^{2k+1+n} 
  \langle 1s \rangle \tensor 
  \tilde{H}_{k-2+d}(\M[{[2,2k+1+n] \setminus \{s\}}];\Z),
  \]
  $\omega$ is defined as  
  in Theorem~\ref{exseq012-thm},
  and $\theta^\oplus_{k-1}$ is defined by
  \[
  \theta^\oplus_{k-1}(1s \tensor z)
  = 1s \tensor z \wedge \gamma^{(2k+1)}.
  \]
  One easily checks that the diagram commutes; going to the right and
  then down or going down and then to the right both give the same map
  \[
  (c_1 + \sum_{s:1s \in G_k} 1s \wedge z_{1s}) \tensor 1 \mapsto
  \sum_{s:1s \in G_k}(1s \tensor z_{1s} \wedge
  \gamma^{(2k+1)}) \tensor 1,
  \]
  where $c_1$ is a sum of oriented simplices from $\M(G_k\setminus
  \{1\})$ and each $z_{1s}$ is a sum of oriented simplices from
  $\M(G_k \setminus \{1,s\})$ satisfying $\partial(z_{1s}) = 0$ and 
  $\partial(c_1) + \sum_{s} z_{1s} = 0$.
  Moreover, $\theta^\oplus_{k-1} \tensor \iota_p$ is a
  monomorphism, because the restriction to each
  summand is a monomorphism by induction on $k$. 
  Namely, since $\mathcal{G}$ is compatible, $G_{k} \setminus \{1,s\}$
  is isomorphic to $G_{k-1}$ for each $s$ such that $1s \in G_{k}$.
  As a consequence,
  $(\theta^\oplus_{k-1} \circ \hat{\omega}) \tensor \iota_p$ is a
  monomorphism, which implies that $\theta_k \tensor \iota_p$ is a
  monomorphism.

  For the very last statement, it suffices to prove that 
  $\hat{\theta}_k$ is a well-defined homomorphism, which is true if
  and only if $\theta_k(pz) = 0$ for each $z \in
  \tilde{H}_{k-1}(\M(G_k);\Z)$. 
  Now, let $c \in \tilde{C}_{d}(\M[n];\Z)$ be such
  that $\partial(c) = p\gamma$; such a $c$ exists by assumption. We
  obtain that 
  \[
  \partial(z \wedge c^{(2k+1)}) =
  \pm z \wedge (p\gamma^{(2k+1)}) =
  \pm (pz) \wedge \gamma^{(2k+1)};
  \]
  hence $\theta_k(pz) = 0$ as desired. 
  \end{proof}
  One may generalize Theorem~\ref{torallovergen-thm} by allowing 
  a whole family $\mathcal{G}_k$ of graphs for each $k$ rather than
  just one single graph $G_k$. The condition for compatibility would
  then be that for any $G \in \mathcal{G}_k$ and for any $s$ such that
  $1s \in G$, the induced subgraph $G([2k+1] \setminus \{1,s\})$ is
  isomorphic to some graph in $\mathcal{G}_{k-1}$. We do not need this
  generalization in this paper.

  \begin{theorem}
    For $k \ge 0$ and $r \ge 4$, there is $3$-torsion of rank 
    at least $\binom{2k}{k}$ in 
    $\tilde{H}_{k-1+r}(\M[2k+1+3r];\Z)$.
    Moreover, for $k \ge 0$, there is $3$-torsion of rank 
    at least $\binom{k+1}{\lfloor (k+1)/2\rfloor}$ in 
    $\tilde{H}_{k+2}(\M[2k+10];\Z)$. To summarize, 
    $\tilde{H}_{k-1+r}(\M[2k+1+3r];\Z)$ contains nonvanishing
    $3$-torsion whenever $k \ge 0$ and $r \ge 3$.
    \label{torsionallover-thm}
  \end{theorem}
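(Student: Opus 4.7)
The plan is to deduce both rank bounds from Theorem~\ref{torallovergen-thm}, which turns a single $p$-torsion class in $\tilde H_{d-1}(\M[n];\Z)$ into a $k$-parameter family of $p$-torsion classes in $\tilde H_{k-1+d}(\M[2k+1+n];\Z)$. Applying it requires a compatible family $\mathcal G = \{G_k\}$, a cycle $\gamma$ of exponent three, and a verification that the base-case map $\theta_{k_0} \tensor \iota_3$ is injective. For the first statement ($r \ge 4$) I take $G_k = K_{2k+1}$ (compatible by Proposition~\ref{compatible-prop}(1)), $\gamma = \gamma_{3r}$ from~(\ref{gamman-eq}), and $k_0 = 0$. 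The base case reduces to the assertion that the shifted cycle $\gamma_{3r}^{(1)}$ generates $\tilde H_{r-1}(\M[3r+1];\Z) \cong \Z_3$, which is immediate from Theorem~\ref{bouctor-thm} via the $\Symm{3r+1}$-symmetry already used there. The exponent-three condition on $\gamma_{3r}$ in $\tilde H_{r-1}(\M[3r];\Z)$ follows because that group is elementary $3$-torsion by Theorem~\ref{ShWa-thm} for $r \ge 5$ and by Figure~\ref{matching-fig} for $r = 4$, while $\gamma_{3r}$ is nontrivial in it because its image in $\M[3r+1]$ generates. Theorem~\ref{torallovergen-thm} then embeds $\tilde H_{k-1}(\M[2k+1];\Z)\tensor \Z_3$ into the $3$-torsion of the target, and Bouc's rational formula (Theorem~\ref{bouc-thm}) gives $\mathrm{rank}_\Z \tilde H_{k-1}(\M[2k+1];\Z) = \binom{2k}{k}$.

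For the second statement ($r = 3$) I take $d = 3$, $n = 9$, and $\gamma \in \tilde H_2(\M[9];\Z)$. The complication is that $\tilde H_2(\M[9];\Z) \cong \Z^{42} \oplus \Z_3^8$ has a free summand, so the exponent condition is no longer automatic. My plan is to apply the $0$-$1$-$2$ sequence (Theorem~\ref{exseq012-thm}) to the pair $(\M[9],\M[10])$: since $\tilde H_1(\M[8];\Z) = 0$ by Figure~\ref{matching-fig}, the restriction $\tilde H_2(\M[9];\Z) \twoheadrightarrow \tilde H_2(\M[10];\Z) \cong \Z_3$ is surjective, and because $3\tilde H_2(\M[9];\Z) \subseteq \Z^{42}$ in the stated decomposition, any preimage of a generator can be adjusted by subtracting an element of the free summand so as to land in the torsion submodule $\Z_3^8$. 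Feeding the resulting $\gamma$ into Theorem~\ref{torallovergen-thm} with the compatible family~(3) of Proposition~\ref{compatible-prop}, whose top free rank is at least $\binom{k+1}{\lfloor (k+1)/2 \rfloor}$ by a direct Euler-characteristic count, then yields the claimed bound.

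The main obstacle is the $r = 3$ construction of $\gamma$: the adjustment strategy above requires that the composite projection $\tilde H_2(\M[9];\Z) \to \Z_3$ restrict nontrivially to the torsion summand $\Z_3^8$, which calls for a closer analysis of the inclusion-induced map on homology. Once a suitable $\gamma$ is produced, both rank bounds follow mechanically from Theorem~\ref{torallovergen-thm}, and the summary assertion (nonvanishing $3$-torsion whenever $k \ge 0$, $r \ge 3$) follows because $\binom{2k}{k}$ and $\binom{k+1}{\lfloor (k+1)/2 \rfloor}$ are each strictly positive for every $k \ge 0$.
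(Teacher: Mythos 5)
Your argument for the first bound (rank $\ge \binom{2k}{k}$ for $r \ge 4$) is essentially identical to the paper's: compatible family $\{K_{2k+1}\}$, cycle $\gamma_{3r}$, base case $k_0 = 0$ handled via Theorem~\ref{bouctor-thm} and Lemma~\ref{generators-lem}, exponent-three condition from Theorem~\ref{ShWa-thm}. That part is sound.

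Your treatment of the second bound is genuinely different from the paper's and it has two real problems, one of which you partly notice and one you do not. The paper does \emph{not} take $\gamma \in \tilde{H}_2(\M[9];\Z)$ with $(n,d) = (9,3)$; it takes $\gamma = \gamma_6 \in \tilde{H}_1(\M[7];\Z) \cong \Z_3$ with $(n,d) = (7,2)$. This choice makes the exponent-three condition and the base-case injectivity trivial (the group is already $\Z_3$), so the whole issue you flag---whether the surjection $\tilde{H}_2(\M[9];\Z) \twoheadrightarrow \Z_3$ restricts nontrivially to the torsion summand---simply never arises. You are right that your adjustment argument as written does not close this; the surjectivity of the restriction to $\Z_3^8$ does not follow from the algebra alone, and your proposal leaves it open. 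The second, unacknowledged problem is a rank bookkeeping error. With $(n,d) = (9,3)$ your $\theta_k$ lands in $\tilde{H}_{k+2}(\M[2k+10];\Z)$, so the 3-torsion rank you would obtain equals the rank of $\tilde{H}_{k-1}(\M(G_k);\Z)$ for the same index $k$ of family (3). For $k = 1$, $G_1 = P_3$ and $\tilde{H}_0(\M(P_3);\Z) \cong \Z$ has rank $1 < 2 = \binom{2}{1}$; so the claim that family (3) has ``top free rank at least $\binom{k+1}{\lfloor(k+1)/2\rfloor}$'' is false, and no Euler-characteristic count can rescue it. The paper instead targets $\tilde{H}_{k+1}(\M[2k+8];\Z)$ via $\gamma_6$, proves $\operatorname{rank}\tilde{H}_{k-1}(\M(G_k);\Z) \ge \binom{k}{\lfloor k/2\rfloor}$ by exhibiting fundamental cycles of bipartite pseudomanifold subcomplexes, and then re-indexes $k \mapsto k+1$ to arrive at the stated $\binom{k+1}{\lfloor(k+1)/2\rfloor}$ bound for $\tilde{H}_{k+2}(\M[2k+10];\Z)$. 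As written, your proof of the second statement therefore does not go through; the first and third parts are fine.
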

  \begin{proof}
    For the first statement, consider the compatible family 
    $\{ K_{2k+1} : k \ge 0\}$ and the cycle 
    $\gamma_{3r} \in \tilde{H}_{r-1}(\M[3r];\Z)$
    defined as in (\ref{gamman-eq}).
    By Theorem~\ref{bouctor-thm} and Lemma~\ref{generators-lem},
    \[
    \theta_{0} \tensor \iota_3 :
    \tilde{H}_{-1}(\M[1];\Z) 
    \tensor_\Z \Z_3  \cong 
    \Z \tensor_\Z \Z_3 \rightarrow
    \tilde{H}_{r-1}(\M[3r+1];\Z) \tensor_\Z \Z_3
    \]
    defines an isomorphism, where 
    $\theta_{0}(\lambda) = \lambda  \gamma_{3r}^{(1)}$.
    By Lemma~\ref{generators-lem} and 
    Theorem~\ref{ShWa-thm}, $\gamma_{3r}$ has exponent $3$ 
    in $\tilde{H}_{r-1}(\M[3r];\Z)$; hence
    Theorem~\ref{torallovergen-thm} yields that the group
    $\tilde{H}_{k-1+r}(\M[2k+1+3r];\Z)$ contains 
    $3$-torsion of rank at least the rank of 
    the group $\tilde{H}_{k-1}(\M[2k+1];\Z)$. 
    By a result due to 
    Bouc \cite{Bouc}, this rank equals $\binom{2k}{k}$.

    For the second statement, consider the compatible family
    $\{G_k = K_{2k+1} \setminus \{23, 45, 67, \ldots, 2k(2k+1)\} : k
    \ge 
    1\}$ and the cycle $\gamma_6 = (12-23) \wedge (45-56)  
    \in \tilde{H}_1(\M[7];\Z)$.
    For $k=1$, we obtain that $G_1$ is the graph $P_3$ on three
    vertices with edge
    set $\{12,13\}$; clearly, $\tilde{H}_0(\M(P_3);\Z)
    \cong \Z$. As a consequence, 
    \[
    \theta_{1} \tensor \iota_3 :
    \tilde{H}_{0}(\M(P_3);\Z) 
    \tensor_\Z \Z_3 \rightarrow
    \tilde{H}_{2}(\M[10];\Z) \tensor_\Z \Z_3
    \]
    is an isomorphism; apply 
    Theorem~\ref{bouctor-thm}. Proceeding as in the first 
    case and using the fact that $\gamma_6$ has exponent 
    $3$ in $\tilde{H}_{1}(\M[7];\Z)$, we conclude that 
    $\tilde{H}_{k+1}(\M[2k+8];\Z)$ contains 
    $3$-torsion of rank at least the rank of 
    $\tilde{H}_{k-1}(\M(G_k);\Z)$ for each $k \ge 1$. 

    It remains to show that the rank of 
    $\tilde{H}_{k-1}(\M(G_k);\Z)$ is at least
    $\binom{k}{\lfloor k/2\rfloor}$. Let $A$ be any subset of the
    removed edge set 
    \[
    E = \{23, 45, \ldots, 2k(2k+1)\}
    \]
    such that the size of $A$ is $\lfloor k/2 \rfloor$; write $B = E
    \setminus A$. Consider the complete bipartite
    graph $G_k^A$ with one block equal to $\{1\} \cup \bigcup_{e
    \in A} e$ and the other block equal to $\bigcup_{e \in B} e$.
    For even $k$, the size of the
    ``$A$'' block is $k+1$; for odd $k$, the size of the ``$A$'' block
    is 
    $k$. It is clear that $G_k^A$ is a subgraph of $G_k$. 

    Label the vertices in $[2,2k+1]$ as $s_1, t_1, s_2, t_2, \ldots,
    s_{k}, t_{k}$ such that $s_it_i \in A$ for even $i$ and 
    $s_it_i \in B$ for odd $i$. Consider the matching 
    \[
    \sigma_A = \{1s_1, t_1s_2, t_2s_3, \ldots, t_{k-1}s_k\}.
    \]
    One easily checks that 
    $\sigma_A \in \M(G_k^{A'})$ if and only if $A = A'$. 
    Now, as observed by Shareshian and Wachs \cite[(6.2)]{ShWa},
    $\M(G^A_k)$ is an orientable pseudomanifold. Defining $z_A$ to be
    the fundamental cycle of $\M(G^A_k)$, we obtain that $\{z_A : A
    \subset E, \# A =  \lfloor k/2 \rfloor\}$ forms an independent set
    in $\tilde{H}_{k-1}(\M(G_k);\Z)$, which concludes the proof.
  \end{proof}

    \ \\
    Let $G_k = K_{2k+1} \setminus \{23, 45, 67, \ldots, 2k(2k+1)\}$ be
    the graph in the above proof.
    Based on computer calculations for $k \le 5$, we conjecture that 
    the rank $r_k$ of $\tilde{H}_{k-1}(\M(G_k);\Z)$ equals
    the coefficient of $x^k$ in $(1+x+x^2)^k$; this is sequence
    A002426 in Sloane's Encyclopedia \cite{EIS}. Equivalently,
    \[
    \sum_{k \ge 0} r_k x^k = \frac{1}{\sqrt{1-2x-3x^2}}.
    \]

  \begin{proposition}[Jonsson \cite{m14}]
    We have that $\tilde{H}_{4}(\M[13];\Z) \cong T \oplus
    \Z^{24596}$, where $T$ is a finite group containing 
    $\Z_3^{10}$ as a subgroup.
    \label{m13-prop}
  \end{proposition}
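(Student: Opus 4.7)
The plan has two steps: determine the free rank of $\tilde{H}_4(\M[13];\Z)$, then exhibit $\Z_3^{10}$ inside the torsion subgroup $T$. For the free rank, Bouc's explicit rational formula underlying Theorem~\ref{bouc-thm} gives $\dim_\Q \tilde{H}_4(\M[13];\Q)=24596$: the degree $d=4$ lies in the rational nontrivial range $[4,5]$ for $n=13$, and substituting into Bouc's closed formula yields the dimension. The structure theorem for finitely generated abelian groups then gives $\tilde{H}_4(\M[13];\Z)\cong T\oplus\Z^{24596}$ with $T$ finite.

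To extract a $\Z_3^{10}$ subgroup of $T$, the natural tool is the $0$-$1$-$2$ sequence (Theorem~\ref{exseq012-thm}) at $n=13$, $d=4$, whose central fragment
\[
\tilde{H}_4(\M[13];\Z)\stackrel{\omega}{\to}\bigoplus_{s=2}^{13}\langle 1s\rangle\tensor\tilde{H}_3(\M[11];\Z)\stackrel{\partial}{\to}\tilde{H}_3(\M[12];\Z)=\Z_3^{56}
\]
shows that the kernel of $\partial$ restricted to the $3$-torsion summand $(\Z_3^{45})^{12}\cong\Z_3^{540}$ has $\Z_3$-dimension at least $540-56=484$. The aim is then to find ten $\Z_3$-linearly-independent classes in this kernel whose preimages under $\omega$ in $\tilde{H}_4(\M[13];\Z)$ have exponent exactly $3$; these ten preimages would give the desired $\Z_3^{10}\hookrightarrow T$. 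Candidate preimages can be built from the type-$\brackom{3}{1}^2\wedge\brackom{5}{2}$ generators of the $3$-torsion part of $\tilde{H}_3(\M[11];\Z)$ supplied by Lemma~\ref{generators-lem}, extended to $\tilde{C}_4(\M[13];\Z)$ by adjoining correction chains forced by the boundary operator.

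The main obstacle is this last step: passing from the abstract bound on the $3$-rank of $\ker\partial$ to a concrete copy of $\Z_3^{10}$ inside $T$. The short exact sequence derived from the $0$-$1$-$2$ approach need not split, so a $3$-torsion element of $\ker\partial$ may be hit only by a class whose triple lies in the free image of $\tilde{H}_4(\M[12];\Z)=\Z^{12440}$, contributing a cyclic $\Z_{3^k}$-summand of $T$ with $k\ge 1$ rather than a clean $\Z_3$-factor. The pure join construction behind Theorem~\ref{torallovergen-thm} cannot rescue this, because any $\gamma\wedge w$ with $\gamma$ a $3$-torsion class on $n'\le 12$ vertices and $w$ a cycle on the complementary $13-n'$ vertices forces $w$ to lie in a vanishing homology group (e.g.\ $\tilde{H}_2(\M[6])$, $\tilde{H}_1(\M[4])$, $\tilde{H}_1(\M[3])$, $\tilde{H}_0(\M[2])$, or $\tilde{H}_0(\M[1])$, covering all degree-matched cases). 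The author's paper~\cite{m14} circumvents both issues by exhibiting $10$ explicit cycles in $\tilde{C}_4(\M[13];\Z)$ and verifying their order-$3$ linear independence via a Smith-normal-form reduction on the subcomplex they and their boundaries span; this concrete linear-algebra verification is the heart of the argument.
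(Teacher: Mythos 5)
The paper does not actually prove Proposition~\ref{m13-prop}: it is stated as an imported result, attributed to the author's companion preprint \cite{m14} (``Five-Torsion in the Matching Complex on 14 Vertices''), and used in the present paper only as a known input to Corollary~\ref{torsionallover-cor}. There is therefore no in-paper proof to compare your proposal against.

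That said, your analysis of the problem is sound and useful as far as it goes. The free rank $24596$ does indeed follow from Bouc's explicit rational formula underlying Theorem~\ref{bouc-thm}, and the structure theorem then gives the decomposition $T\oplus\Z^{24596}$ with $T$ finite (here $(k,r)=(3,2)$, so Theorem~\ref{bouc-thm} gives infinite rational homology but the torsion part is still finite). You are also right that neither of the two ``obvious'' machines from this paper can, by itself, exhibit $\Z_3^{10}$ inside $T$: the $0$-$1$-$2$ sequence at $(n,d)=(13,4)$ bounds the $3$-rank of the image of $\omega$, not the $3$-torsion of a subgroup of $\tilde H_4(\M[13];\Z)$, and your enumeration showing that every degree-matched join partner $\tilde H_2(\M[6])$, $\tilde H_1(\M[4])$, $\tilde H_1(\M[3])$, $\tilde H_0(\M[2])$, $\tilde H_0(\M[1])$ vanishes correctly rules out the pure join construction of Theorem~\ref{torallovergen-thm} at this parameter (this is consistent with Theorem~\ref{torsionallover-thm}, which only covers $r\ge 3$). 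However, your proposal stops short of producing the ten order-$3$ classes; it explicitly defers the decisive step to the computations in \cite{m14}. As an attempted blind proof it is therefore incomplete, though your diagnosis of why a short argument along the lines of this paper's other results does not suffice is accurate, and mirrors the fact that the paper itself does not attempt to reprove the proposition but cites the external computation.
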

  \begin{corollary}
    For $n \ge 1$, there is nonvanishing $3$-torsion
    in the homology group $\tilde{H}_d(\M[n]; \Z)
    = \tilde{H}_{k-1+r}(\M[2k+1+3r];\Z)$ whenever
    \[
    \left\lceil\frac{n-4}{3}\right\rceil \le d \le 
    \left\lfloor\frac{n-6}{2}\right\rfloor
    \Longleftrightarrow
    \left\{
    \begin{array}{l}
      k \ge 0 \\
      r \ge 3
    \end{array}
    \right.   
    \]
    or $r=2$ and $k \in \{0,1,2,3\}$.
    Moreover,
    $\tilde{H}_d(\M[n]; \Z)$ is nonzero if and only
    if 
    \[
    \left\lceil\frac{n-4}{3}\right\rceil \le d \le 
    \left\lfloor\frac{n-3}{2}\right\rfloor
    \Longleftrightarrow
    \left\{
    \begin{array}{l}
      k \ge 0 \\
      r \ge 0.
    \end{array}
    \right.   
    \]
    \label{torsionallover-cor}
  \end{corollary}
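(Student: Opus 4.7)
The plan is to handle the two assertions separately, decomposing cases via the $(k,r)$ parameterization of \eqref{nd2kr-eq}.

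For the first assertion, I would invoke Theorem~\ref{torsionallover-thm} for all $(k,r)$ with $k \ge 0$ and $r \ge 3$, which via \eqref{nd2kr-eq} is precisely the range $\lceil (n-4)/3\rceil \le d \le \lfloor (n-6)/2\rfloor$. The four supplementary cases $r=2$, $k \in \{0,1,2,3\}$ correspond to $\tilde{H}_1(\M[7];\Z)$, $\tilde{H}_2(\M[9];\Z)$, $\tilde{H}_3(\M[11];\Z)$, and $\tilde{H}_4(\M[13];\Z)$. Nonvanishing $3$-torsion in these is read off Figure~\ref{matching-fig}, with Theorem~\ref{bouctor-thm} handling $n=7$ and Proposition~\ref{m13-prop} handling $n=13$ in a computer-free manner.

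For the \emph{if and only if} clause, vanishing below the range (i.e.\ for $d < \nu_n$) is exactly Theorem~\ref{smallest-thm}. For $d > \lfloor (n-3)/2\rfloor$, I would split on the parity of $n$: when $n=2m+1$, the inequality forces $d$ above the dimension of $\M[n]$, so vanishing is automatic; when $n=2m$, only the top dimension $d=m-1$ requires attention, and there $\tilde{H}_{m-1}(\M[2m];\Z)$ is torsion-free as a subgroup of the top chain group, so it is determined by its rank. The corresponding $(k,r)$ gives $r=-1$, which lies outside the rational nonvanishing range of Theorem~\ref{bouc-thm}, so the rank is zero and the group vanishes.

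For nonvanishing \emph{inside} the range, I would combine the first assertion with Theorem~\ref{bouc-thm}. The first assertion already disposes of $r \ge 3$ together with the four small extras; what remains is $r \in \{0,1\}$ (every $k \ge 0$) and $r=2$ with $k \ge 4$. In each such pair one has $k \ge \binom{r}{2}$, so Bouc's rational formula gives nonvanishing rational homology, which immediately lifts to nonvanishing integral homology. I do not anticipate a real obstacle at any step: all the substantive inputs were assembled in earlier sections, and what remains is the bookkeeping to confirm that the union of the subcases above exactly covers $\nu_n \le d \le \lfloor (n-3)/2\rfloor$ via \eqref{nd2kr-eq}.
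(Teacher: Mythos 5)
Your proof is correct and follows essentially the paper's route: Theorem~\ref{torsionallover-thm} plus Figure~\ref{matching-fig} (and Proposition~\ref{m13-prop}, Theorem~\ref{bouctor-thm}) for the first assertion, then Theorem~\ref{smallest-thm} and Bouc's rational formula (Theorem~\ref{bouc-thm}) for the second. The one small departure is your treatment of vanishing for $d > \lfloor(n-3)/2\rfloor$ with $n$ even: the paper invokes Lemma~\ref{evencoll-lem} (the collapse to dimension $\le m-2$), while you instead note that top homology is torsion-free and then kill its rank with Theorem~\ref{bouc-thm}; both are valid and roughly equal in effort.
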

  \begin{proof}
    The first statement is a consequence of
    Theorem~\ref{torsionallover-thm}, 
    Proposition~\ref{m13-prop}, and Figure~\ref{matching-fig}.
    For the second statement, Theorem~\ref{bouc-thm} yields that 
    the group $\tilde{H}_{k-1+r}(\M[2k+1+3r]; \Z)$ is infinite 
    if and only if $r \ge 0$ and $k \ge \binom{r}{2}$.
    In particular, the group is infinite
    for all $k \ge 0$ and $0 \le r \le 2$ except $(k,r) = (0,2)$.
    Since $\tilde{H}_{k-1+r}(\M[2k+1+3r]; \Z) \cong \Z_3$ when
    $k = 0$ and $r=2$, we are done by 
    Theorem~\ref{smallest-thm} and Lemma~\ref{evencoll-lem}.
  \end{proof}
  Corollary~\ref{torsionallover-cor} suggests the following
  conjecture:
  \begin{conjecture}
    The group $\tilde{H}_{d}(\M[n]; \Z)
    = \tilde{H}_{k-1+r}(\M[2k+1+3r];\Z)$ contains
    $3$-torsion if and only if 
    \[
    \left\lceil\frac{n-4}{3}\right\rceil \le d \le \left\lfloor
    \frac{n-5}{2}\right\rfloor
    \Longleftrightarrow
    \left\{
    \begin{array}{l}
      k \ge 0 \\
      r \ge 2.
    \end{array}
    \right.   
    \]
    \label{torsion-conj}
  \end{conjecture}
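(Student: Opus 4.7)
The strategy treats the two directions of the biconditional separately.

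For the ``only if'' direction (no $3$-torsion when $r \le 1$), the case $r = 0$ is immediate because $\tilde{H}_{k-1}(\M[2k+1];\Z)$ is the top reduced homology of a complex of dimension $k-1$ and hence a subgroup of the free chain group. For $r = 1$, I would induct on $k$ using the $0$-$1$-$2$ sequence (Theorem~\ref{exseq012-thm}) applied at $(n,d) = (2k+4, k)$. The leftmost term $\bigoplus_s \tilde{H}_k(\M[2k+2])$ vanishes because its parameters give $R = -1$, placing it above the top dimension, so the sequence restricts to
\[
0 \to \tilde{H}_k(\M[2k+3]) \to \tilde{H}_k(\M[2k+4]) \to \bigoplus_s \tilde{H}_{k-1}(\M[2k+2]),
\]
whose first term is free (top homology of $\M[2k+3]$) and whose third term is free by the inductive hypothesis. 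Since any extension of a free abelian group by a subgroup of a free abelian group is itself free, $\tilde{H}_k(\M[2k+4])$ is free, closing the induction from the base case $\tilde{H}_0(\M[4]) = \Z^2$.

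For the ``if'' direction, Corollary~\ref{torsionallover-cor} already handles every case with $r \ge 3$ together with $(k,r) = (k,2)$ for $k \in \{0,1,2,3\}$. The open cases are $r = 2$ with $k \ge 4$, for which I would attempt an induction on $k$ using the $0$-$1$-$2$ sequence at $(n, d) = (2k+7, k+1)$:
\[
\tilde{H}_{k+1}(\M[2k+6]) \xrightarrow{\alpha} \tilde{H}_{k+1}(\M[2k+7]) \xrightarrow{\beta} \bigoplus_{s=2}^{2k+7} \tilde{H}_k(\M[2k+5]) \xrightarrow{\delta} \tilde{H}_k(\M[2k+6]).
\]
Here $\tilde{H}_{k+1}(\M[2k+6])$ is free by the ``only if'' direction just proved, and each summand in the middle carries $3$-torsion by induction. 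A rank count (for instance, in the companion $0$-$3$-$4$ sequence at $k = 4$, the $3$-torsion rank of $\bigoplus \tilde{H}_3(\M[11]) = \Z_3^{45} \oplus \Z^{1188}$ is $45 \cdot 156 = 7020$, compared to $56 \cdot 26 = 1456$ for $\bigoplus \tilde{H}_3(\M[12]) = \Z_3^{56}$) forces a large $3$-torsion subgroup of $\ker(\delta)$, giving abundant $3$-torsion in the quotient $\tilde{H}_{k+1}(\M[2k+7])/\mathrm{im}(\alpha)$.

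The main obstacle is lifting this $3$-torsion back to $\tilde{H}_{k+1}(\M[2k+7])$ itself. Since $\mathrm{im}(\alpha)$ is free, an extension $0 \to \mathrm{im}(\alpha) \to \tilde{H}_{k+1}(\M[2k+7]) \to \ker(\delta) \to 0$ need not split and can even annihilate all $3$-torsion, as witnessed by $0 \to 3\Z \to \Z \to \Z_3 \to 0$. I would try to produce a $3$-torsion class in $\tilde{H}_{k+1}(\M[2k+7])$ directly by exploiting $\Symm{2k+6}$-equivariance: the direct sum $\bigoplus_s \tilde{H}_k(\M[2k+5])$ possesses a large symmetric ``trace-zero'' subgroup sitting in $\ker(\delta)$ whose preimage under $\beta$ is highly constrained, and cross-checking with the $0$-$3$-$5$-$6$ sequence of Theorem~\ref{exseq0356-thm} should bound the obstruction class in $\mathrm{Ext}^1(\ker\delta, \mathrm{im}(\alpha))$. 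The central technical difficulty is to convert this symmetry into an explicit surviving $3$-cycle; most plausibly this would require extending Theorem~\ref{torallovergen-thm} to a compatible family of graphs whose top homology lands in the $r = 2$ stratum—a construction that has so far resisted the techniques developed in the paper.
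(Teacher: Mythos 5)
This statement is labelled a \emph{conjecture} in the paper, and the paper does not prove it. Immediately after stating it, the paper records that Corollary~\ref{torsionallover-cor} handles everything except $r=2$, $k\ge 4$, that the $r\in\{0,1\}$ cases are ``easily checked,'' and that \emph{the conjecture remains unsettled} for $r=2$, $k\ge 4$ (it would follow from Conjecture~\ref{splitexact-conj}, a split-exactness conjecture on the $0$-$e$-$2$ sequence). So there is no proof in the paper to compare against, and you should not expect to find one.

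Your ``only if'' argument is correct and nicely fleshes out what the paper leaves implicit. For $r=0$, $\tilde{H}_{k-1}(\M[2k+1];\Z)$ is the top homology group of a $(k-1)$-dimensional complex and hence a subgroup of a free chain group. For $r=1$, the $0$-$1$-$2$ sequence at $(n,d)=(2k+4,k)$ does give
\[
0 \longrightarrow \tilde{H}_k(\M[2k+3]) \longrightarrow \tilde{H}_k(\M[2k+4]) \longrightarrow \bigoplus_{s} \tilde{H}_{k-1}(\M[2k+2]),
\]
with the left term free (top homology of a $k$-dimensional complex, $r'=0$), the right term free by the inductive hypothesis ($r'=1$ at $k-1$), and the leftmost term in the full sequence vanishing ($r'=-1$). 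Since the image in the right term is a subgroup of a free group, hence free, the middle term is an extension of two free groups, hence free. Base case $\tilde{H}_0(\M[4];\Z)\cong\Z^2$ checks out.

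For the ``if'' direction you correctly isolate the open subcase $r=2$, $k\ge 4$. Your proposed attack via the $0$-$1$-$2$ sequence and the $\mathrm{Ext}^1(\ker\delta,\mathrm{im}\,\alpha)$ obstruction is well diagnosed: as you say, the extension need not split, and a free submodule can absorb all torsion in the quotient, so a dimension count alone cannot conclude. You honestly acknowledge this and note that the compatible-family machinery of Theorem~\ref{torallovergen-thm} would need to be extended to graphs whose top homology lies in the $r=2$ stratum. This is exactly the wall the paper itself hits; the author's own candidate way forward is Conjecture~\ref{splitexact-conj} (split-exactness of the $0$-$e$-$2$ short exact piece), which you do not mention but which is the closest thing the paper offers to a route. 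So: your analysis is sound, your provable parts are proved, and your gap is the same gap the paper admits. The proposal is not a proof, but nothing in the paper is either.
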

  By Corollary~\ref{torsionallover-cor}, the
  conjecture remains unsettled if and only if $r = 2$ and $k\ge 4$;
  for the cases $r=0$ and $r=1$, one easily checks that the homology
  is free. The conjecture would follow if we were able to settle
  Conjecture~\ref{splitexact-conj} in Section~\ref{concluding-sec}.

  \subsection{Intervals with vanishing homology over $\Z_p$ for all $p
    \neq 3$}
  \label{pnot3-sec}

  Throughout this section, let $p$ be a prime different from $3$.
  Using the exact sequences in Sections~\ref{exseq012-sec}, 
  \ref{exseq034-sec}, and \ref{exseq0356-sec}, 
  we provide bounds on $d$ and $n$ such that
  $\tilde{H}_d(\M[n];\Z_p)$ is zero.

  \begin{theorem}
    The group $\tilde{H}_d(\M[n];\Z_p) =
    \tilde{H}_{k-1+r}(\M[2k+1+3r];\Z_p)$ 
    is zero unless $2n-8 \le 5d \Longleftrightarrow 
    r \le k + 1$. 
    Moreover, for each $q \ge 0$, the following hold (notation as in
    Section~{\rm\ref{basic-sec}}): 
    \begin{itemize}
    \item
      $\tilde{H}_{2q-1}(\M[5q])$ is
      generated by cycles of type
      $\brackom{5}{2}^{q}$ and 
      $\brackom{3}{1} \wedge \brackom{5q-3}{2q-1}$.
    \item
      $\tilde{H}_{2q-1}(\M[5q+1])$ is
      generated by cycles of type $\brackom{1}{0} \wedge
      \brackom{5}{2}^{q}$. 
    \item
      $\tilde{H}_{2q}(\M[5q+3])$ is
      generated by cycles of type
      $\brackom{3}{1} \wedge \brackom{5}{2}^{q}$.
    \item
      $\tilde{H}_{2q}(\M[5q+4])$ 
      is generated by cycles of type
      $\brackom{1}{0} \wedge \brackom{3}{1} \wedge \brackom{5}{2}^q$.
    \end{itemize}
    \label{p5-thm}
  \end{theorem}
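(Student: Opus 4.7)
The plan is to prove both parts of Theorem~\ref{p5-thm} simultaneously by strong induction on $n$, taking all homology with $\Z_p$ coefficients for a fixed prime $p\neq 3$. The strengthened inductive hypothesis bundles the vanishing claim for $r\ge k+2$ with the four generator descriptions at the boundary cases $r\in\{k-2,k-1,k,k+1\}$; base cases are supplied by Figure~\ref{matching-fig} together with the rational vanishing of Theorem~\ref{bouc-thm}.

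For the vanishing step, the main tool is the tail of the 0-3-4 sequence from Theorem~\ref{exseq034-thm},
\[
R_{d-1}^{n-3}\xrightarrow{\varphi^*}\tilde{H}_d(\M[n])\xrightarrow{\kappa^*}Q_{d-2}^{n-4}.
\]
Under the substitution~(\ref{nd2kr-eq}), the summand $\tilde{H}_{d-1}(\M[n-3])$ in $R_{d-1}^{n-3}$ has parameters $(k'=k,\,r'=r-1)$, and $\tilde{H}_{d-2}(\M[n-4])$ in $Q_{d-2}^{n-4}$ has parameters $(k'=k-2,\,r'=r)$. For $r\ge k+3$ both satisfy $r'\ge k'+2$ and vanish by induction, hence $\tilde{H}_d(\M[n])=0$. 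The delicate case is $r=k+2$: the term $Q_{d-2}^{n-4}$ still vanishes, but $R_{d-1}^{n-3}$ lies in the boundary case $(k,k+1)$, inductively generated by cycles of type $\brackom{1}{0}\wedge\brackom{3}{1}\wedge\brackom{5}{2}^{k}$. Vanishing then reduces, via exactness, to showing these generators lie in the image of $\psi^*:Q_{d-1}^{n-4}\to R_{d-1}^{n-3}$, where $\tilde{H}_{d-1}(\M[n-4])$ itself sits at the boundary $(k+1,k)$, generated by cycles of type $\brackom{3}{1}\wedge\brackom{5}{2}^{k}$. The explicit formula $\psi(1s\wedge 2t\tensor x)=2t\tensor x-1s\tensor x$ then makes this a direct, if bookkeeping-heavy, verification: the free vertex carrying the $\brackom{1}{0}$ factor is paired with $a\in\{1,2\}$ to produce the required preimage.

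The four generator descriptions are proved in the same induction via the 0-3-5-6 sequence of Theorem~\ref{exseq0356-thm}. A parameter count shows that for $r\in\{k-1,k,k+1\}$ the trailing term $R_{d-3}^{n-6}$ has parameters $(k-3,r)$ with $r-(k-3)\ge 2$ and vanishes by induction, so $\tilde{H}_d(\M[n])$ is surjected onto by $P_{d-2}^{n-5}\oplus Q_{d-1}^{n-3}$ under $\iota^*\varphi^*$. The explicit formulas $\varphi(1c\tensor x)=(1c-23)\wedge x$ and $\varphi(as\wedge bt\tensor x)=\varphi(as\wedge bt)\wedge x$ prepend a $\brackom{3}{1}$ or a $\brackom{5}{2}$ factor to cycles coming from $\tilde{H}_{d-1}(\M[n-3])$ and $\tilde{H}_{d-2}(\M[n-5])$; combined with the inductive type descriptions for those groups, they yield exactly the types asserted. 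The lowest case $r=k-2$ is more subtle because $R_{d-3}^{n-6}$ itself lies at the boundary case $(k-3,k-2)$ and does not vanish, so one must use its inductive type description to compute the kernel of $\tilde{H}_d(\M[n])\to R_{d-3}^{n-6}$ explicitly, or chain in the 0-3-4 sequence for an auxiliary argument.

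The main obstacle is the interlocking inductive structure: the vanishing at $r=k+2$ depends on the generator description at $r=k+1$, the description at $r=k+1$ in turn needs vanishing at neighbouring boundary terms, and the description at $r=k-2$ requires controlling a nontrivial trailing term from the 0-3-5-6 sequence. Throughout, the argument relies on the explicit cycle-level formulas for $\psi$ and $\varphi$ rather than on formal sequence properties, since one really needs to track how the individual $\brackom{n_i}{d_i}$ factors propagate through the connecting maps.
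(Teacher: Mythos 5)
Your proposal diverges from the paper's route and contains two genuine gaps.

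The paper organises the induction by residue of $n$ modulo $5$ and uses a different exact sequence in each residue class: the 0-3-4 sequence for $n=5q$ and $n=5q+1$, the 0-1-2 sequence for $n=5q+2$ and $n=5q+4$, and the 0-3-5-6 sequence only for $n=5q+3$. You instead propose to push the 0-3-4 sequence through all the vanishing cases and the 0-3-5-6 sequence through all the generator cases, which is where the two problems arise.

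First, the treatment of the critical vanishing case $r=k+2$ (i.e.\ $n=5q+2$, $d=2q-1$) does not work as described. You want to show $\psi^*\colon Q_{d-1}^{n-4}\to R_{d-1}^{n-3}$ is surjective by "pairing the free vertex with $a\in\{1,2\}$." But $\psi(1s\wedge 2t\tensor x)=2t\tensor x-1s\tensor x$ produces only differences such as $1u\tensor z-2v\tensor z$, never the generator $au\tensor z$ by itself, and the relations $1u\tensor z\equiv 2v\tensor z$, $1v\tensor z\equiv 2u\tensor z$ do not force any single generator to vanish. Moreover $\operatorname{coker}(\psi^*)\cong\tilde{H}_d(\M[n])$ by exactness, so surjectivity of $\psi^*$ is literally the statement you are trying to prove; without an independent input the argument is circular. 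The paper sidesteps this by switching to the 0-1-2 sequence: $\tilde{H}_{2q-1}(\M[5q+1])\twoheadrightarrow\tilde{H}_{2q-1}(\M[5q+2])$ because $\tilde{H}_{2q-2}(\M[5q])=0$, and the generators of type $\brackom{1}{0}\wedge\brackom{5}{2}^q$ acquire an extra free vertex in $\M[5q+2]$, becoming type $\brackom{2}{0}\wedge\brackom{5}{2}^q$, and a $\brackom{2}{0}$ factor lives in $\tilde{H}_{-1}(\M[2])=0$.

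Second, and more seriously, you never invoke the conversion step that is the crux of the paper's generator arguments and the only place the hypothesis $p\neq 3$ is actually used: over $\Z_p$ with $p\neq 3$, a cycle of type $\brackom{3}{1}\wedge\brackom{3}{1}$ can be rewritten as a sum of cycles of type $\brackom{1}{0}\wedge\brackom{5}{2}$, because $\tilde{H}_1(\M[7];\Z_p)=0$ forces $\tilde{H}_1(\M[6];\Z_p)$ to be generated by cycles of type $\brackom{1}{0}\wedge\brackom{5}{2}$ via the 0-1-2 sequence. Without this, prepending $\brackom{3}{1}$ via $\varphi$ in the 0-3-5-6 (or 0-3-4) sequence produces the wrong types: e.g.\ at $r=k$ you get $\brackom{3}{1}\wedge\brackom{3}{1}\wedge\brackom{5}{2}^{q-1}$, which you claim "yields exactly the type asserted" ($\brackom{1}{0}\wedge\brackom{5}{2}^q$), but the passage from the former to the latter is precisely the missing step. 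The same trick is also what kills the image of $\varphi^*$ in the $r=k+2$ case and cleans up the stray type $\brackom{3}{1}\wedge\brackom{3}{1}\wedge\brackom{5q-3}{2q-1}$ in the $n=5q+3$ case. You would need to add this lemma (and derive it from the base case $\tilde{H}_1(\M[7];\Z_p)=0$) before the inductive bookkeeping could possibly close.

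Finally, you concede that the $r=k-2$ generator case ($n=5q$) forces you back to the 0-3-4 sequence anyway, which is exactly what the paper does there; at that point the uniformity of routing everything through the 0-3-5-6 sequence is lost, and the paper's case-by-case choice of sequence looks like the cleaner design.
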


  \begin{figure}[htb]
    \begin{center}
      \begin{tabular}{|l|l||l|l|}
        \hline
        $n$ & $\mu_n$ &  $n$ & $\mu_n$ \\
        \hline
        \hline
        $5q-5$ & $2q-3$ &  $5q$ & $2q-1$ \\
        \hline
        $5q-4$ & $2q-3$ &  $5q+1$ & $2q-1$ \\
        \hline
        $5q-3$ & $2q-2$ &  $5q+2$ & $2q$ \\
        \hline
        $5q-2$ & $2q-2$ &  $5q+3$ & $2q$ \\
        \hline
        $5q-1$ & $2q-2$ &  $5q+4$ & $2q$ \\
        \hline
      \end{tabular}
    \end{center}
    \caption{$\mu_n = \lceil\frac{2n-8}{5}\rceil$ for different values of
      $n$.}
    \label{p5proof-fig}
  \end{figure}

  \begin{proof}
    Writing $\mu_n = \lceil\frac{2n-8}{5}\rceil$, we obtain 
    Figure~\ref{p5proof-fig}, which might be of some help
    when reading this proof.

    One easily checks the theorem for $n \le 5$; thus assume that $n
    \ge 6$. Assume inductively that the theorem is true for all $m \le
    n-1$. We have five cases for $n$:

    $\bullet$
    $n=5q$. The first case is perhaps the hardest. By the long exact
    0-3-4 sequence in
    Section~\ref{exseq034-sec}, we have an exact sequence
    of the form
    \[
    \begin{CD}
      \bigoplus \tilde{H}_{d-1}(\M[5q-3]) 
      @>>> 
      \tilde{H}_{d}(\M[5q])
      @>>> 
      \bigoplus \tilde{H}_{d-2}(\M[5q-4]).
    \end{CD}
    \]
    By induction, the groups on the left and right are zero whenever
    $d < 2q-1$, which
    implies that the same is true for the group in the middle.

    It remains to prove that $\tilde{H}_{2q-1}(\M[5q])$ is generated
    by cycles of type 
    $\brackom{5}{2}^{q}$ and 
    $\brackom{3}{1} \wedge \brackom{5q-3}{2q-1}$.
    For this, consider the tail end of the long exact 0-3-4 sequence: 
    \[
    \begin{CD}
      & & \displaystyle{\bigdirsum_{a,u}}\
      \langle au \rangle \tensor \tilde{H}_{2q-2}(\M[{[3,5q]\setminus \{u\}}])
      @>\varphi^*>>
      \tilde{H}_{2q-1}(\M[5q])  \\
      @>\kappa^*>>  
      \displaystyle{\bigdirsum_{s,t}}\
      \langle 1s \wedge 2t \rangle \tensor
      \tilde{H}_{2q-3}(\M[{[3,5q]\setminus \{s,t\}}])
      @>>> 0;
    \end{CD}
    \]
    see Theorem~\ref{exseq034-thm}.
    To generate $\tilde{H}_{2q-1}(\M[5q])$, we will combine two sets
    of cycles:
    \begin{itemize}
    \item[(1)]
      The first set consists of the image under $\varphi^*$ of an
      appropriate set of generators of the 
      first group in the exact sequence.
    \item[(2)]
      The second set consists of an appropriate set of cycles in the
      group $\tilde{H}_{2q-1}(\M[5q])$ such that the image under
      $\kappa^*$ of this set generates the third group in the sequence.
    \end{itemize}

    (1) By properties of $\varphi^*$, the image of any cycle 
    in the leftmost group has type $\brackom{3}{1} \wedge
    \brackom{5q-3}{2q-1}$.

    (2) Induction yields that 
    $\tilde{H}_{2q-3}(\M[{[3,5q]\setminus
        \{s,t\}}]) \cong \tilde{H}_{2q-3}(\M[5q-4])$ is
    generated by cycles of type $\brackom{1}{0} \wedge
    \brackom{5}{2}^{q-1}$.
    Now, consider a cycle $z \in \tilde{H}_{2q-3}(\M[{[3,5q]\setminus 
        \{s,t\}}])$ of type $\brackom{1}{0} \wedge
      \brackom{5}{2}^{q-1}$;
    let $x$ be the unused element in $z$ 
    corresponding to the empty cycle of type $\brackom{1}{0}$. 
    Define
    \[
    \gamma = 1s \wedge 2t + 2t \wedge sx + sx \wedge 12 + 12 \wedge tx
    + tx \wedge 1s.
    \]
    It is clear that $\kappa^*$ maps $\gamma \wedge z$ to
    $1s \wedge 2t \tensor z$ and that
    $\gamma \wedge z$ has type
    $\brackom{5}{2}^{q}$. 
    Thus we are done.

    $\bullet$
    $n=5q+1$. 
    Again using the long exact 0-3-4 sequence in
    Section~\ref{exseq034-sec}, we deduce
    that $\tilde{H}_{d}(\M[5q+1])$ is zero whenever 
    $\tilde{H}_{d-1}(\M[5q-2])$ and $\tilde{H}_{d-2}(\M[5q-3])$ are
    zero, which is true for $d < 2q-1$. 
    For $d = 2q-1$, we obtain the exact sequence
    \[
    \begin{CD}
      \displaystyle{\bigdirsum_{a,u}}\
      \langle au \rangle \tensor
      \tilde{H}_{2q-2}(\M[{[3,5q+1]\setminus \{u\}}]) 
      @>\varphi^*>>
      \tilde{H}_{2q-1}(\M[5q+1])  
      @>>> 
      0.
    \end{CD}
    \]
    By induction, 
    $\tilde{H}_{2q-2}(\M[{[3,5q+1]\setminus \{u\}}])
    \cong \tilde{H}_{2q-2}(\M[5q-2])$ is generated
    by cycles of type 
    $\brackom{3}{1} \wedge \brackom{5}{2}^{q-1}$.
    Hence properties of $\varphi^*$ yield that
    $\tilde{H}_{2q-1}(\M[5q+1])$ is generated by cycles of type
    $\brackom{3}{1} \wedge \brackom{3}{1} \wedge \brackom{5}{2}^{q-1}$.
    By the exact sequence for the pair
    $(\M[7],\M[6])$ in Section~\ref{exseq012-sec} and the 
    fact that $\tilde{H}_1(\M[7]) = 0$, we have that 
    $\tilde{H}_1(\M[6];\Z)$ is generated by cycles of type 
    $\brackom{1}{0} \wedge \brackom{5}{2}$; use
    Theorem~\ref{exseq012-thm}. As a consequence,  
    any cycle of type $\brackom{3}{1} \wedge \brackom{3}{1} \wedge
    \brackom{5}{2}^{q-1}$ can be written as a sum of cycles of 
    type $\brackom{1}{0} \wedge \brackom{5}{2} \wedge
    \brackom{5}{2}^{q-1} = \brackom{1}{0} \wedge 
    \brackom{5}{2}^{q}$.

    $\bullet$
    $n=5q+2$.
    Using the long exact 0-1-2 sequence in Section~\ref{exseq012-sec}, 
    we conclude 
    that $\tilde{H}_{d}(\M[5q+2])$ is zero whenever 
    $\tilde{H}_{d}(\M[5q+1])$ and $\tilde{H}_{d-1}(\M[5q])$ are
    zero, which is true for $d < 2q-1$. For $d = 2q-1$, we have the
    exact sequence 
    \[
    \begin{CD}
      \tilde{H}_{2q-1}(\M[{[2,5q+2]}]) @>\iota^*>>
      \tilde{H}_{2q-1}(\M[5q+2]) @>>> 0,
    \end{CD}
    \]
    where $\iota^*$ is induced by the inclusion map. By induction,
    we have that $\tilde{H}_{2q-1}(\M[{[2,5q+2]}]) \cong
    \tilde{H}_{2q-1}(\M[5q+1])$ is generated by cycles of type 
    $\brackom{1}{0} \wedge \brackom{5}{2}^q$.
    It follows that the group $\tilde{H}_{2q-1}(\M[5q+2])$ is
    generated by cycles of type 
    $\brackom{2}{0} \wedge \brackom{5}{2}^q$, which means that
    $\tilde{H}_{2q-1}(\M[5q+2]) = 0$.

    $\bullet$
    $n=5q+3$. This time, we use the long exact 0-3-5-6 sequence from
    Section~\ref{exseq0356-sec}. 
    By properties of this sequence, the group
    $\tilde{H}_{d}(\M[5q+3])$ is zero whenever 
    $\tilde{H}_{d-1}(\M[5q])$, $\tilde{H}_{d-2}(\M[5q-2])$,
    and $\tilde{H}_{d-3}(\M[5q-3])$ are zero, which is true
    for $d < 2q$. For $d=2q$, we have a surjection
    \[
    \begin{CD}
      \bigoplus
      \langle as\wedge bt \rangle \tensor
      \tilde{H}_{2q-2}(\M[{[4,5q+3] \setminus 
          \{s,t\}}]) \oplus
      \bigoplus \langle 1c \rangle \tensor
      \tilde{H}_{2q-1}(\M[{[4,5q+3]}])\\
      @V\varphi^*VV \\ 
    \tilde{H}_{2q}(\M[5q+3])
    \end{CD}
    \]
    defined as in Lemma~\ref{exseq0356-lem}.
    To establish that $\tilde{H}_{2q}(\M[5q+3])$ is generated by
    cycles of type $\brackom{3}{1} \wedge \brackom{5}{2}^q$, it
    suffices to prove that $\tilde{H}_{2q}(\M[5q+3])$ is generated by
    cycles of type $\brackom{5}{2} \wedge \brackom{5q-2}{2q-1}$.
    Namely, by induction, $\tilde{H}_{2q-2}(\M[5q-2])$ is
    generated by cycles of type $\brackom{3}{1} \wedge
    \brackom{5}{2}^{q-1}$.

    Induction yields that 
    $\tilde{H}_{2q-2}(\M[{[4,5q+3] \setminus\{s,t\}}])
    \cong \tilde{H}_{2q-2}(\M[5q-2])$ is generated
    by cycles of type
    $\brackom{3}{1} \wedge \brackom{5q-5}{2q-2}^{q-1}$
    and that $\tilde{H}_{2q-1}(\M[{[4,5q+3]}])
    \cong \tilde{H}_{2q-1}(\M[5q])$ is generated by 
    cycles of type 
    $\brackom{5}{2} \wedge \brackom{5q-5}{2q-2}$ and 
    $\brackom{3}{1} \wedge \brackom{5q-3}{2q-1}$.
    By properties of $\varphi^*$, it follows that 
    $\tilde{H}_{2q}(\M[5q+3])$ is generated by cycles of the following
    types:
    \begin{itemize}
    \item
      $\brackom{5}{2} \wedge \brackom{3}{1} \wedge
      \brackom{5q-5}{2q-2} \prec 
      \brackom{5}{2} \wedge \brackom{5q-2}{2q-1}$;
    \item
      $\brackom{3}{1} \wedge \brackom{5}{2} \wedge
      \brackom{5q-5}{2q-2} \prec \brackom{5}{2} \wedge
      \brackom{5q-2}{2q-1}$;
    \item
      $\brackom{3}{1} \wedge \brackom{3}{1} \wedge \brackom{5q-3}{2q-1}$.
    \end{itemize}
    By the discussion at the end of the case $n = 5q+1$,
    cycles of the very last type can be written as a sum of cycles of
    type $\brackom{1}{0} \wedge \brackom{5}{2} \wedge
    \brackom{5q-3}{2q-1}$. As a consequence, 
    $\tilde{H}_{2q}(\M[5q+3])$ is generated by cycles of type
    $\brackom{5}{2} \wedge \brackom{5q}{2q}$.

    $\bullet$
    $n = 5q+4$. For the final case, we again consider the 
    long exact 0-1-2 sequence from Section~\ref{exseq012-sec}. 
    We obtain that $\tilde{H}_{d}(\M[5q+4])$ is zero whenever 
    $\tilde{H}_{d}(\M[5q+3])$ and $\tilde{H}_{d-1}(\M[5q+2])$ are
    zero, which is true for $d < 2q$. 

    To conclude the proof, it remains to show that 
    $\tilde{H}_{2q}(\M[5q+4])$ is generated by cycles of 
    type $\brackom{1}{0} \wedge \brackom{3}{1} \wedge
    \brackom{5}{2}^q$. Now, induction yields that
    $\tilde{H}_{2q}(\M[5q+3])$ is generated by cycles of type
    $\brackom{3}{1} \wedge \brackom{5}{2}^{q}$.
    Hence $\tilde{H}_{2q}(\M[5q+4])$ is generated by cycles of type 
    $\brackom{1}{0} \wedge \brackom{3}{1} \wedge \brackom{5}{2}^{q}$
    as desired.
  \end{proof}

  \begin{corollary}
    If 
    \[
    \left\lceil\frac{n-4}{3}\right\rceil \le d \le 
    \left\lfloor\frac{2n-9}{5}\right\rfloor
    \Longleftrightarrow 
    0 \le k \le r-2,
    \]
    then $\tilde{H}_d(\M[n];\Z)
     = \tilde{H}_{k-1+r}(\M[2k+1+3r];\Z)$ is a nontrivial
     $3$-group.
    \label{p5-cor}
  \end{corollary}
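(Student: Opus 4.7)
The plan is to assemble the corollary from three ingredients already established in the paper: Theorem~\ref{bouc-thm} forces the group to be finite, Theorem~\ref{p5-thm} kills all $p$-torsion for $p\neq 3$, and Corollary~\ref{torsionallover-cor} provides nontrivial $3$-torsion. The translation $n=2k+1+3r$, $d=k-1+r$ of \eqref{nd2kr-eq} turns the hypothesis $\lceil (n-4)/3\rceil \le d \le \lfloor (2n-9)/5\rfloor$ into $0\le k\le r-2$, so in particular $r\ge 2$ and $r\ge k+2$.

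First, I would verify that the rational homology vanishes in this range. By Theorem~\ref{bouc-thm}, $\tilde H_{k-1+r}(\M[2k+1+3r];\Q)$ is nonzero only when $k\ge\binom{r}{2}$. For $r\ge 2$ and $k\le r-2$ one has $k\le r-2<r(r-1)/2=\binom{r}{2}$ (the inequality $r^2-3r+4>0$ is trivial), so $\tilde H_d(\M[n];\Q)=0$. Hence $\tilde H_d(\M[n];\Z)$ is a finite abelian group.

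Second, I would rule out $p$-torsion for $p\neq 3$. Theorem~\ref{p5-thm} says $\tilde H_d(\M[n];\Z_p)=0$ unless $r\le k+1$. Since we have $r\ge k+2$, the group $\tilde H_d(\M[n];\Z_p)$ vanishes for every prime $p\neq 3$. Combined with finiteness from the previous step, the universal coefficient theorem implies that $\tilde H_d(\M[n];\Z)$ has no $p$-torsion for $p\neq 3$, so it is a $3$-group.

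Finally, nontriviality follows from Corollary~\ref{torsionallover-cor}: its hypotheses $k\ge 0$ and $r\ge 3$, or $r=2$ with $k\in\{0,1,2,3\}$, cover our full range $0\le k\le r-2$ with $r\ge 2$ (in the boundary case $r=2$ we must have $k=0$, which lies in $\{0,1,2,3\}$). This guarantees nonvanishing $3$-torsion, completing the proof. There is no real obstacle here; the corollary is essentially a bookkeeping synthesis of the deep results proved earlier, and the only thing to watch is the boundary case $r=2$, $k=0$ corresponding to $\tilde H_1(\M[7];\Z)\cong\Z_3$, which is handled by Theorem~\ref{bouctor-thm}.
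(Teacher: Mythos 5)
Your proof is correct and follows essentially the same route as the paper, which simply cites Corollary~\ref{torsionallover-cor}, Theorem~\ref{p5-thm}, and the universal coefficient theorem. The only small difference is that you separately invoke Theorem~\ref{bouc-thm} to establish finiteness; this is redundant, since the vanishing of $\tilde{H}_d(\M[n];\Z_p)$ for some (hence every) prime $p\neq 3$ already forces $\tilde{H}_d(\M[n];\Z)\otimes\Z_p=0$ via the universal coefficient theorem, which kills any free summand as well as the $p$-torsion.
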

  \begin{proof}
    This is an immediate consequence of
    Corollary~\ref{torsionallover-cor}, Theorem~\ref{p5-thm}, and 
    the universal coefficient theorem. 
  \end{proof}
  While the bottom nonvanishing groups are
  {\em elementary} $3$-groups by Theorem~\ref{ShWa-thm}, 
  we do not know whether this is true in general for the 
  groups under consideration.

  The smallest $n$ for which Corollary~\ref{p5-cor} implies something
  previously unknown is $n=22$, in which case we may conclude that
  $\tilde{H}_7(\M[22];\Z)$ is a $3$-group; note that $\nu_{22} = 6$.

  \subsection{On the existence of further $5$-torsion}
  \label{further5-sec}

  One may ask whether the upper bound $\frac{2n-9}{5}$ in 
  Corollary~\ref{p5-cor} is best possible, meaning that 
  there is $p$-torsion for some $p \neq 3$, most likely $p=5$, in
  degree $\lceil\frac{2n-8}{5}\rceil$ of the homology of $\M[n]$
  whenever the group under consideration is finite.
  Our hope is that this is indeed the case. 
  While we do not have much evidence to support this hope, we can
  provide the following potentially useful result:
  \begin{theorem}
    For each $q \ge 3$, 
    there is nonvanishing $5$-torsion in the group
    $\tilde{H}_{2q}(\M[5q+4];\Z)$
    if and only if there
    is a cycle $\gamma \in \tilde{H}_{2q}(\M[5q+4];\Z)$ 
    of type $\brackom{1}{0} \wedge \brackom{3}{1} \wedge
    \brackom{5}{2}^q$ such that $\gamma$ has exponent $5$.
    If this is the case, then there is nonvanishing $5$-torsion
    in $\tilde{H}_{2q+u}(\M[5q+4+2u];\Z)$ for each
    $u \ge 0$. 
    \label{further5-thm}
  \end{theorem}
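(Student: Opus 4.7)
My plan is to prove the theorem in two parts: an iff characterization at $u=0$ and the propagation to $u \ge 1$.

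For the iff, the ``if'' direction is immediate. For ``only if'', note that $\tilde{H}_{2q}(\M[5q+4];\Z)$ is finite for $q \ge 2$ by Theorem~\ref{bouc-thm}, applied with $(k,r) = (q, q+1)$ (since $k = q < q(q+1)/2 = \binom{r}{2}$). Thus the presence of $5$-torsion is equivalent to $\tilde{H}_{2q}(\M[5q+4];\Z) \tensor \Z_5 \neq 0$, and, by the universal coefficient theorem, to $\tilde{H}_{2q}(\M[5q+4];\Z_5) \neq 0$. By Theorem~\ref{p5-thm}, the mod-$5$ group is generated by cycles of type $\brackom{1}{0} \wedge \brackom{3}{1} \wedge \brackom{5}{2}^q$; therefore some integer cycle $z$ of this type satisfies $[z] \notin 5\tilde{H}_{2q}(\M[5q+4];\Z)$, which forces $\mathrm{ord}([z]) = 5^a m'$ with $a \ge 1$ and $\gcd(m',5)=1$. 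The scalar multiple $\gamma := 5^{a-1} m' z$ is then a cycle of the same type with order exactly $5$.

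For the propagation step, I would apply Theorem~\ref{torallovergen-thm} with the compatible family $\{K_{2k+1}\}_{k \ge 0}$ from Proposition~\ref{compatible-prop}(1) and a cycle $\gamma^{\circ} \in \tilde{H}_{2q}(\M[5q+3];\Z)$ of type $\brackom{3}{1} \wedge \brackom{5}{2}^q$ with exponent $5$. The resulting map $\theta_k : \tilde{H}_{k-1}(\M[2k+1];\Z) \to \tilde{H}_{k+2q}(\M[2k+5q+4];\Z)$ has, for $k = u$, codomain $\tilde{H}_{2q+u}(\M[5q+4+2u];\Z)$, precisely the target of the theorem. Combined with the fact that $\tilde{H}_{u-1}(\M[2u+1];\Z) \cong \Z^{\binom{2u}{u}}$ is free of positive rank (by Bouc, as used in Theorem~\ref{torsionallover-thm}), the exponent-$5$ refinement of Theorem~\ref{torallovergen-thm} then yields a monomorphism into the $5$-torsion subgroup of each target, giving the desired $5$-torsion of rank at least $\binom{2u}{u}$.

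The construction of $\gamma^{\circ}$ is the principal obstacle. The cycle $\gamma$ produced in the iff direction is supported on only $5q+3$ of the $5q+4$ vertices (the $\brackom{1}{0}$ factor marks an unused vertex), so it determines an integer cycle in $\M[5q+3]$. Applying the $0$-$1$-$2$ sequence (Theorem~\ref{exseq012-thm}) mod $5$ and using the vanishing $\tilde{H}_{2q-1}(\M[5q+2];\Z_5) = 0$ from Theorem~\ref{p5-thm}, the inclusion $\iota : \M[5q+3] \hookrightarrow \M[5q+4]$ induces a surjection $\iota_* : \tilde{H}_{2q}(\M[5q+3];\Z_5) \twoheadrightarrow \tilde{H}_{2q}(\M[5q+4];\Z_5)$. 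Moreover, $\tilde{H}_{2q-1}(\M[5q+3];\Z)$ is a $3$-group by Corollary~\ref{p5-cor}, so the Tor term in the UCT vanishes and $\tilde{H}_{2q}(\M[5q+3];\Z) \tensor \Z_5 \neq 0$. For $q \ge 4$, this group is finite by Theorem~\ref{bouc-thm}, and rerunning the iff argument on $\M[5q+3]$ produces $\gamma^{\circ}$. One also has to ensure that $\iota_*([\gamma^{\circ}])$ is nonzero mod $5$ in $\tilde{H}_{2q}(\M[5q+4];\Z)$, so that the base hypothesis of Theorem~\ref{torallovergen-thm}, namely $\theta_0 \tensor \iota_5$ being a monomorphism, is met; this can be arranged by lifting the mod-$5$ class of $\gamma$ through the surjection and then scaling carefully to obtain order exactly $5$. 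The case $q = 3$, where $\tilde{H}_6(\M[18];\Z)$ carries a free summand rather than being finite, requires a separate argument, likely using the $0$-$3$-$5$-$6$ sequence from Theorem~\ref{exseq0356-thm}.
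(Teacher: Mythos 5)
Your overall strategy is genuinely different from the paper's, and the idea has some appeal, but as written it has two real gaps.

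The paper's propagation argument factors $\gamma = \gamma_5 \wedge \gamma'$, peeling off an entire $\brackom{5}{2}$ block so that $\gamma'$ lives in $\tilde H_{2q-2}(\M[5q-1];\Z)$, which by Bouc's Theorem~\ref{bouc-thm} is \emph{finite for every} $q\ge 3$; it then runs Theorem~\ref{torallovergen-thm} with the compatible family $\{K_{k+1,k}\}$, using a nontrivial lemma (``every element of $\tilde H_1(\M[5];\Z)$ becomes, after doubling, a sum of fundamental cycles of $\M(K_{3,2})$'s'') to establish the base monomorphism $\theta_2\tensor\iota_5$. You instead peel off only the inert $\brackom{1}{0}$ factor, landing in $\tilde H_{2q}(\M[5q+3];\Z)$, and use the family $\{K_{2k+1}\}$ with $k_0=0$. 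The base case becomes trivial and you would win an exponential rank bound $\binom{2u}{u}$ where the paper gets rank $1$. However:

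\textbf{First}, the phrase ``scaling carefully to obtain order exactly 5'' hides a genuine problem. Let $\gamma^\circ$ be the restriction of $\gamma$ to its $5q+3$-vertex support, so $\gamma^{\circ(1)}=\gamma$. If the order of $\gamma^\circ$ in $\tilde H_{2q}(\M[5q+3];\Z)$ is $5^a m'$ with $a\ge 2$, then the only multiples of $\gamma^\circ$ of order exactly $5$ are $5^{a-1}m'u\,\gamma^\circ$; their images in $\M[5q+4]$ equal $5^{a-1}m'u\,\gamma = 0$ because $\gamma$ already has exponent $5$. So insisting on exponent exactly $5$ in $\M[5q+3]$ is incompatible with $\theta_0\tensor\iota_5$ being a monomorphism. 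What one must do instead---and what the paper implicitly does---is drop the requirement ``exponent exactly $p$'' in Theorem~\ref{torallovergen-thm} and work directly with a cycle $z$ satisfying $z\tensor 1\neq 0$ in $\tilde H_{2q}(\M[5q+4];\Z)\tensor\Z_5$ together with \emph{finite} exponent of its restriction; then $\theta_k\tensor\iota_5$ injective forces $\theta_k(w)\notin 5\tilde H$, and finite exponent forces the order of $\theta_k(w)$ to be divisible by $5$. You do not make this argument, and the ``scale to exponent 5'' route fails when $a\ge 2$.

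\textbf{Second}, the case $q=3$ is not a technicality that can be waved away. For $q=3$ the group $\tilde H_{2q}(\M[5q+3];\Z) = \tilde H_6(\M[18];\Z)$ is \emph{infinite} by Theorem~\ref{bouc-thm} (here $k=q+1=4$, $r=q=3$, and $k\ge\binom{r}{2}=3$), so $\gamma^\circ$ may have infinite order, and the ``finite order divisible by $5$'' step collapses. The paper sidesteps exactly this by choosing to strip a $\brackom{5}{2}$ factor instead of $\brackom{1}{0}$: its auxiliary group $\tilde H_{2q-2}(\M[5q-1];\Z)$ is finite for all $q\ge 3$ (including $q=3$, where it is the known finite group $\tilde H_4(\M[14];\Z)$). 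You acknowledge the $q=3$ obstruction and gesture toward the $0$-$3$-$5$-$6$ sequence, but you do not give an argument, and it is not clear that route closes the gap without essentially recreating the paper's $\brackom{5}{2}$-peeling idea. Your ``only if'' part of the iff is a correct and welcome elaboration of what the paper calls an ``immediate consequence'' of Theorem~\ref{p5-thm}.
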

  \begin{proof}
    The first statement is an immediate consequence of
    Theorem~\ref{p5-thm}. 

    For the second statement, 
    assume that $\gamma$ is a cycle with properties as in the
    theorem. Write $\gamma = \gamma_5 \wedge \gamma'$, where
    $\gamma_5$ is of type $\brackom{5}{2}$ and
    $\gamma'$ is of type $\brackom{1}{0} \wedge \brackom{3}{1}
    \wedge \brackom{5}{2}^{q-1}$. 
    It is clear that the exponent of $\gamma'$ in
    $\tilde{H}_{2q-2}(\M[5q-1];\Z)$ is a finite multiple of $5$.  
    Namely, $\gamma'$ is of type $\brackom{14}{5}\wedge
    \brackom{5q-15}{2q-6}$ and $\gamma$ has exponent $5$.

    Now, consider the compatible family $\mathcal{G} = \{K_{k+1,k} : k
    \ge 2\}$; 
    recall Proposition~\ref{compatible-prop}. 
    We claim that 
    every element $z \in \tilde{H}_1(\M[5];\Z)$ has the property
    that $2z$ is a sum of cycles, each having the form
    \[
    ac \wedge bd + bd \wedge ae + ae \wedge bc +
    bc \wedge ad + ad \wedge be + be \wedge ac,
    \]
    where $\{a,b,c,d,e\} = [5]$;
    this is the fundamental cycle of $\M[G_{a,b}]$, where
    $G_{a,b}$ is the complete bipartite graph with blocks
    $\{a,b\}$ and $\{c,d,e\}$. To prove the claim, let $T$ be the
    subgroup of $\tilde{H}_1(\M[5];\Z)$ 
    generated by the fundamental cycles of
    $G_{1,2}$, $G_{2,3}$, $G_{3,4}$, $G_{4,5}$,$G_{5,1}$, and
    $G_{1,3}$.
    One easily checks that the matrix of the natural projection from
    $T$ to the group 
    generated by $51 \wedge 23$, $12 \wedge 34$, $23 \wedge 45$, $34
    \wedge 51$, $45 \wedge 12$, and $13 \wedge 24$ has determinant
    $\pm 2$.
    Since $\tilde{H}_1(\M[5];\Z) \cong \Z^6$, the claim is settled.

    As a consequence, 
    we may assume that $\gamma_5$ is the fundamental cycle of
    $\M(K_{3,2})$. In particular, the map
    \[
    \theta_{2} \tensor \iota_5 :
    \tilde{H}_{1}(\M(K_{3,2});\Z) 
    \tensor_\Z \Z_5 \rightarrow
    \tilde{H}_{2q}(\M[5q+4];\Z) \tensor_\Z \Z_5
    \]
    defined by $\theta_2(z) = z \wedge \gamma'$
    is a monomorphism; $\gamma = \gamma_5 \wedge \gamma'$.
    Now, applying Theorem~\ref{torallovergen-thm}, 
    we deduce that we have a monomorphism
    \[
    \theta_{2+u} \tensor \iota_5 :
    \tilde{H}_{1+u}(\M(K_{3+u,2+u});\Z) 
    \tensor_\Z \Z_5 \rightarrow
    \tilde{H}_{2q+u}(\M[5q+4+2u];\Z) \tensor_\Z \Z_5
    \]
    defined by $\theta_{2+u}(z) = z \wedge (\gamma')^{(2u)}$ for each
    $u \ge 0$; notation is as in Section~\ref{highermatch-sec}.
    Since the exponent of $\gamma'$ in
    $\tilde{H}_{2q-2}(\M[5q-1];\Z)$  
    is a finite multiple of $5$, there is indeed
    nonvanishing  
    $5$-torsion in $\tilde{H}_{2q+u}(\M[5q+4+2u];\Z)$.
  \end{proof}

  \begin{corollary}
    If there is nonvanishing $5$-torsion in 
    $\tilde{H}_{2q}(\M[5q+4];\Z)$ for each $q \ge 3$, then
    $\tilde{H}_{d}(\M[n];\Z) = \tilde{H}_{k-1+r}(\M[2k+1+3r];\Z)$
    contains nonvanishing $5$-torsion whenever 
    \[
    \left\lceil \frac{2n-8}{5}\right\rceil \le 
    d \le \left\lfloor \frac{n-7}{2}\right\rfloor
    \Longleftrightarrow
    4 \le r \le k+1.
    \]
    \label{further5-cor}
  \end{corollary}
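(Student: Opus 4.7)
The plan is to reduce the corollary directly to Theorem~\ref{further5-thm} via the reparametrization (\ref{nd2kr-eq}). Theorem~\ref{further5-thm} guarantees, under the hypothesis of this corollary, that $\tilde{H}_{2q+u}(\M[5q+4+2u];\Z)$ contains nonvanishing $5$-torsion for every $q \ge 3$ and $u \ge 0$. What needs to be shown is that every pair $(n,d) = (2k+1+3r,k-1+r)$ with $4 \le r \le k+1$ can be written in the form $(5q+4+2u, 2q+u)$ with $(q,u)$ in the admissible range.

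First I would solve the linear system
\[
5q+4+2u = 2k+1+3r, \qquad 2q+u = k-1+r,
\]
whose unique solution is $q = r-1$ and $u = k+1-r$. The constraint $q \ge 3$ then becomes $r \ge 4$, and $u \ge 0$ becomes $r \le k+1$, matching the hypothesis of the corollary exactly. Plugging these values into Theorem~\ref{further5-thm} produces nonvanishing $5$-torsion in $\tilde{H}_{2q+u}(\M[5q+4+2u];\Z) = \tilde{H}_{k-1+r}(\M[2k+1+3r];\Z)$, which is the desired conclusion.

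The only remaining bookkeeping step is to verify the stated equivalence between $\lceil (2n-8)/5\rceil \le d \le \lfloor (n-7)/2\rfloor$ and $4 \le r \le k+1$. Substituting $n = 2k+1+3r$ and $d = k-1+r$, the inequality $2d \le n-7$ becomes $2k+2r+5 \le 2k+1+3r$, i.e.\ $r \ge 4$, while $5d \ge 2n-8$ becomes $5k+5r-5 \ge 4k+6r-6$, i.e.\ $r \le k+1$. There is no essential obstacle here; the corollary is a routine consequence of Theorem~\ref{further5-thm}, and all the substantive work (construction of the cycle $\gamma_5 \wedge \gamma'$ and invocation of Theorem~\ref{torallovergen-thm} for the compatible family of complete bipartite graphs) was already carried out in the proof of that theorem.
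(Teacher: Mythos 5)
Your proof is correct and is essentially the paper's intended argument: the paper states the corollary with no proof, treating it as an immediate consequence of Theorem~\ref{further5-thm}, and your reparametrization $q = r-1$, $u = k+1-r$ together with the verification of the bound equivalences is exactly the routine bookkeeping that the paper leaves implicit.
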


  \subsection{Bounds on the homology over $\Z_3$}
  \label{boundsmatch-sec}

  The goal of this section is to provide nontrivial upper bounds on
  the dimension of $\tilde{H}_d(\M[n];\Z_3)$ when 
  $n$ and $d$ satisfy the conditions in Corollary~\ref{p5-cor}.
  To achieve this,
  we use the long exact 0-$e$-2 sequence from Section~\ref{exseq0e2-sec}
  and the long exact 0-2-3-5 sequence
  from Section~\ref{exseq0235-sec}. 

  Define
  \[
  \left\{
  \begin{array}{rcl}
    \betti{n}{d} &=& \dim_{\Z_3} \tilde{H}_d(\M[n];\Z_3)
    \\[1ex]
    \abetti{n}{d} &=&  \dim_{\Z_3} \tilde{H}_d(\M[n] \setminus
    12;\Z_3).
  \end{array}
  \right.
  \]
  \begin{lemma}
    For all $n \ge 2$ and all $d$, we have that
    \[
      \betti{n}{d} \le \abetti{n}{d} + \betti{n-2}{d-1}.
    \]
    For $n \ge 5$ and all $d$, we have that
    \[
      \abetti{n}{d} \le \betti{n-3}{d-1} + 
      2\mbox{$\binom{n-3}{2}$} \betti{n-5}{d-2} + 
      (n-3) \abetti{n-2}{d-1}.
    \]
    \label{bettiineq-lem}
  \end{lemma}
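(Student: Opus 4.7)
The plan is to deduce both inequalities by applying subadditivity of dimension to finite three-term segments of the two long exact sequences already constructed: for any exact triple $A \to B \to C$ of $\Z_3$-vector spaces, $\dim B \le \dim A + \dim C$. No new geometric input is required, only careful bookkeeping of the direct-sum decompositions of the outer terms.

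For the first inequality, I would take $\Field = \Z_3$ and $e = 12$ in the 0-$e$-2 sequence of Theorem~\ref{exseq0e2-thm}, and read off the three-term segment
\[
\tilde{H}_d(\M[n]\setminus 12;\Z_3) \to \tilde{H}_d(\M[n];\Z_3) \to \langle 12\rangle \tensor \tilde{H}_{d-1}(\M[{[n]\setminus\{1,2\}}];\Z_3).
\]
Since $\M[{[n]\setminus\{1,2\}}] \cong \M[n-2]$ and $\langle 12\rangle$ is one-dimensional, the outer terms have dimensions $\abetti{n}{d}$ and $\betti{n-2}{d-1}$, and subadditivity gives $\betti{n}{d} \le \abetti{n}{d} + \betti{n-2}{d-1}$.

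For the second inequality, I would apply the same principle to the three-term segment
\[
\langle 13\rangle \tensor \tilde{H}_{d-1}(\M[{[4,n]}];\Z_3) \oplus P_{d-2}^{n-5} \to \tilde{H}_d(\M[n]\setminus 12;\Z_3) \to Q_{d-1}^{n-2}
\]
of the 0-2-3-5 sequence of Theorem~\ref{exseq0235-thm}, and compute the dimensions of the outer terms summand by summand. The $\langle 13\rangle$ factor contributes $\betti{n-3}{d-1}$ since $\M[{[4,n]}] \cong \M[n-3]$. The group $P_{d-2}^{n-5}$ is a direct sum over ordered pairs $(s,t)$ of distinct elements of $[4,n]$ — of which there are $(n-3)(n-4) = 2\binom{n-3}{2}$ — each summand isomorphic to $\tilde{H}_{d-2}(\M[n-5];\Z_3)$, contributing $2\binom{n-3}{2}\betti{n-5}{d-2}$. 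Finally $Q_{d-1}^{n-2}$ is a direct sum over $u \in [4,n]$ of $n-3$ copies of $\tilde{H}_{d-1}(\M[{[n]\setminus\{3,u\}}]\setminus 12;\Z_3)$, and under the obvious relabeling that preserves the distinguished edge $\{1,2\}$ each of these is $\abetti{n-2}{d-1}$, giving a total of $(n-3)\abetti{n-2}{d-1}$.

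There is no real obstacle here, as both long exact sequences and both direct-sum decompositions are already established; the only small care point is verifying the identification $\M[{[n]\setminus\{3,u\}}]\setminus 12 \cong \M[n-2]\setminus 12$ used in the $Q$-term, which is immediate because the relabeling sending $[n]\setminus\{3,u\}$ bijectively to $[n-2]$ can be chosen to fix $1$ and $2$. Summing the three contributions completes the proof of the second bound.
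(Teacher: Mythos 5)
Your argument is correct and is exactly the paper's intended proof; the paper simply states the inequalities are immediate consequences of Theorems~\ref{exseq0e2-thm} and \ref{exseq0235-thm}, and you have filled in precisely the subadditivity and summand-counting steps that make them immediate. The dimension counts ($2\binom{n-3}{2}$ ordered pairs for $P_{d-2}^{n-5}$, $n-3$ choices of $u$ for $Q_{d-1}^{n-2}$) and the relabelings fixing the edge $12$ are all correct.
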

  \begin{proof}
    The inequalities are immediate consequences of
    Theorems~\ref{exseq0e2-thm} and \ref{exseq0235-thm}.
  \end{proof}

  Define
  $\hat{\beta}_{k,r} = \betti{n}{d}$ and
  $\hat{\alpha}_{k,r} = \abetti{n}{d}$,
  where $k$ and $r$ are defined as in (\ref{nd2kr-eq}).
  \begin{corollary}
    For $k \ge 0$, $r \ge 0$, and $k+r \ge 1$, we have that
    \begin{eqnarray*}
    \hat{\beta}_{k,r} &\le& \hat{\alpha}_{k,r} +
    \hat{\beta}_{k-1,r};
    \\
    \hat{\alpha}_{k,r} &\le& \hat{\beta}_{k,r-1} +
    2\mbox{$\binom{2k+3r-2}{2}$}\hat{\beta}_{k-1,r-1}
    + (2k+3r-2)\hat{\alpha}_{k-1,r}.
    \end{eqnarray*}
    \label{bettiineq-cor}
  \end{corollary}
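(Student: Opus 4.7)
The plan is to read off both inequalities from Lemma~\ref{bettiineq-lem} by applying the change of variables (\ref{nd2kr-eq}). Setting $n = 2k+1+3r$ and $d = k-1+r$, we have by definition $\hat{\beta}_{k,r} = \betti{n}{d}$ and $\hat{\alpha}_{k,r} = \abetti{n}{d}$, so the task reduces to identifying, for each $(n',d')$ appearing on the right-hand side of one of the two inequalities of Lemma~\ref{bettiineq-lem}, the unique pair $(k',r')$ such that $n' = 2k'+1+3r'$ and $d' = k'-1+r'$.

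For the first inequality I would compute $n-2 = 2(k-1)+1+3r$ and $d-1 = (k-1)-1+r$, so $\betti{n-2}{d-1} = \hat{\beta}_{k-1,r}$, and substituting into Lemma~\ref{bettiineq-lem} yields $\hat{\beta}_{k,r} \le \hat{\alpha}_{k,r} + \hat{\beta}_{k-1,r}$ at once. For the second inequality three analogous translations are required. First, $n-3 = 2k+1+3(r-1)$ together with $d-1 = k-1+(r-1)$ gives $\betti{n-3}{d-1} = \hat{\beta}_{k,r-1}$. Next, $n-5 = 2(k-1)+1+3(r-1)$ and $d-2 = (k-1)-1+(r-1)$ give $\betti{n-5}{d-2} = \hat{\beta}_{k-1,r-1}$. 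Finally, $n-2 = 2(k-1)+1+3r$ and $d-1 = (k-1)-1+r$ give $\abetti{n-2}{d-1} = \hat{\alpha}_{k-1,r}$. Since the coefficients also match verbatim, namely $\binom{n-3}{2} = \binom{2k+3r-2}{2}$ and $n-3 = 2k+3r-2$, the second claim drops out directly.

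The only care needed concerns boundary cases and the hypothesis $n \ge 5$ of Lemma~\ref{bettiineq-lem}. When $k \ge 1$ and $r \ge 1$ we have $n = 2k+1+3r \ge 6$ and every subscript $(k',r')$ on the right-hand side is nonnegative, so the translation is valid as written. In the remaining cases covered by $k+r \ge 1$ (for instance $k=0$, or $r=0$), some of the symbols $\hat{\beta}_{k',r'}$ and $\hat{\alpha}_{k',r'}$ on the right-hand side acquire a negative subscript; these are interpreted as zero, reflecting the vanishing of the corresponding reduced homology groups (by Theorem~\ref{smallest-thm} and Lemma~\ref{evencoll-lem}), and the inequalities then either become vacuous or reduce to the small-$n$ instances of Lemma~\ref{bettiineq-lem} directly. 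No real obstacle arises: the corollary is a pure change-of-variables bookkeeping exercise, and the main value of writing it out is to set up clean recurrences in $(k,r)$ coordinates that can be iterated in Section~\ref{boundsmatch-sec}.
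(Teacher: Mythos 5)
Your proof is correct and follows exactly the paper's own (implicit) reasoning: Corollary~\ref{bettiineq-cor} is stated immediately after Lemma~\ref{bettiineq-lem} with no separate proof, precisely because it is the lemma rewritten under the change of variables (\ref{nd2kr-eq}). Your translations $(n-2,d-1)\leftrightarrow(k-1,r)$, $(n-3,d-1)\leftrightarrow(k,r-1)$, $(n-5,d-2)\leftrightarrow(k-1,r-1)$ and the coefficient identifications $\binom{n-3}{2}=\binom{2k+3r-2}{2}$, $n-3=2k+3r-2$ are all correct.

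One small inaccuracy in your final paragraph: you assert that the remaining boundary cases either become vacuous or reduce to small-$n$ instances of the lemma, but this is not quite so. For $(k,r)=(1,0)$ one has $n=3<5$, outside the range of the second half of Lemma~\ref{bettiineq-lem}; there the right-hand side of the second inequality collapses to $\hat{\beta}_{1,-1}=\betti{0}{-2}=0$ (the two other terms have zero coefficient), while $\hat{\alpha}_{1,0}=\dim_{\Z_3}\tilde{H}_0(\M[3]\setminus 12;\Z_3)=1$, so that instance of the inequality actually fails rather than holding vacuously. This is a harmless imprecision already present in the paper's statement of the corollary -- downstream it is only invoked for $r\ge k+3$, where $n\ge 15$ and all indices are comfortably in range -- but a cleaner formulation would restrict the second inequality to $r\ge 1$ (equivalently $n\ge 5$) rather than merely $k+r\ge 1$.
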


  \begin{theorem}
    For each $k \ge 0$, there are polynomials $f_k(r)$ and $g_k(r)$ of
    degree $3k$ with dominating term $\frac{3^k}{k!}r^{3k}$ such that 
    \[
    \left\{
    \begin{array}{rcl}
      \hat{\beta}_{k,r} &\le& f_{k}(r)
      \\[1ex] 
      \hat{\alpha}_{k,r} &\le& g_{k}(r)
    \end{array}
    \right.
    \]
    for all $r \ge k+2$.
    Equivalently, 
    \[
    \left\{
    \begin{array}{rcl}
      \betti{n}{d} &\le& f_{3d-n+4}(n-2d-3)
      \\[1ex]
      \abetti{n}{d} &\le& g_{3d-n+4}(n-2d-3) 
    \end{array}
    \right.
    \]
    for all $n \ge 7$ and $\lceil\frac{n-4}{3}\rceil \le d \le
    \lfloor\frac{2n-9}{5}\rfloor$.
    \label{betti-thm}
  \end{theorem}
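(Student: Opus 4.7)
The plan is to prove both bounds simultaneously by induction on $k$, turning the second inequality of Corollary~\ref{bettiineq-cor} into a first-order recurrence in $r$ for $\hat{\beta}_{k,r}$ (after absorbing $\hat{\alpha}_{k,r}$ via the first inequality), and then summing.

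The base case $k = 0$ amounts to bounding both $\hat{\beta}_{0,r}$ and $\hat{\alpha}_{0,r}$ by absolute constants for $r \ge 2$. The bound $\hat{\beta}_{0,r} \le 1$ is immediate from Theorem~\ref{bouctor-thm} and the universal coefficient theorem. For $\hat{\alpha}_{0,r}$, one applies the 0-$e$-2 sequence of Theorem~\ref{exseq0e2-thm} at $n = 3r+1$ and $e = 12$: since $\tilde{H}_{r-2}(\M[3r-1];\Z_3) = 0$ by the connectivity Theorem~\ref{smallest-thm}, the map $\tilde{H}_{r-1}(\M[3r+1];\Z_3) \to \tilde{H}_{r-2}(\M[3r-1];\Z_3)$ is zero, and only the preceding contribution from $\tilde{H}_{r-1}(\M[3r-1];\Z_3)$ needs to be absorbed into a constant $g_0$.

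For the inductive step at $k \ge 1$, assume $\hat{\beta}_{k-1,s}$ and $\hat{\alpha}_{k-1,s}$ are bounded by polynomials of degree $3(k-1)$ with leading coefficient $\frac{3^{k-1}}{(k-1)!}$ for $s \ge k+1$. The crucial substitution uses the asymptotic $2\binom{2k+3r-2}{2} = 9r^2 + O(r)$ combined with the hypothesis on $\hat{\beta}_{k-1,r-1}$ to produce
\[
2\binom{2k+3r-2}{2}\hat{\beta}_{k-1,r-1} = \frac{3^{k+1}}{(k-1)!}\,r^{3k-1} + O(r^{3k-2}),
\]
while the term $(2k+3r-2)\hat{\alpha}_{k-1,r}$ is only $O(r^{3k-2})$. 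Combined with the first inequality of Corollary~\ref{bettiineq-cor} (whose error $\hat{\beta}_{k-1,r}$ has degree merely $3k-3$), this yields
\[
\hat{\beta}_{k,r} \le \hat{\beta}_{k,r-1} + \frac{3^{k+1}}{(k-1)!}\,r^{3k-1} + O(r^{3k-2}).
\]
Iterating from $r = k+2$ and applying the standard identity $\sum_{s=1}^{r} s^{3k-1} = \frac{r^{3k}}{3k} + O(r^{3k-1})$ yields a polynomial $f_k(r)$ of degree $3k$ with leading coefficient $\frac{3^{k+1}}{(k-1)!\cdot 3k} = \frac{3^k}{k!}$, as required. The bound $\hat{\alpha}_{k,r} \le g_k(r)$ with the same leading term then follows by reinserting $f_k(r-1)$ into the second inequality.

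The main obstacle is the careful tracking of the leading coefficient through the asymptotic expansion of the binomial and the subsequent discrete integration: a single imprecise estimate could easily yield $\frac{3^k}{(k-1)!}$ or similar in place of the sharp $\frac{3^k}{k!}$. A secondary difficulty lies in the base case: bounding $\hat{\alpha}_{0,r}$ by a constant requires genuine topological input from the 0-$e$-2 sequence, because plugging $k = 0$ directly into the recurrence would introduce auxiliary quantities $\hat{\beta}_{-1,r-1}$ and $\hat{\alpha}_{-1,r}$ that lie outside the inductive range and must be controlled separately.
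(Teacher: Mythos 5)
Your inductive step is sound and matches the paper's, but the base case has a genuine gap that would break the whole argument. You bound $\hat{\alpha}_{0,r}$ via the 0-$e$-2 sequence (Theorem~\ref{exseq0e2-thm}) at $n=3r+1$, which yields
\[
\hat{\alpha}_{0,r} \ \le\ \dim_{\Z_3}\tilde{H}_{r-1}(\M[3r-1];\Z_3) \ +\ \hat{\beta}_{0,r}
\ =\ \betti{3r-1}{r-1}\ +\ \hat{\beta}_{0,r},
\]
and you claim the first term can be absorbed into a constant $g_0$. But $\betti{3r-1}{r-1} = \hat{\beta}_{2,r-2}$, which is one of the very quantities the theorem aims to bound and is \emph{not} bounded by a constant: for instance $\betti{8}{2}=132$ (for $r=3$) and $\betti{11}{3}=1233$ (for $r=4$). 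If $g_0(r)$ were only controlled by a degree-$6$ polynomial, the recurrence would output $f_1(r)$ of degree $8$ rather than $3$, and the induction collapses.

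What makes the theorem work is the sharper inequality coming from the 0-2-3-5 sequence, i.e., the second inequality in Lemma~\ref{bettiineq-lem} specialized to $k=0$, which gives $\hat{\alpha}_{0,r} \le \hat{\beta}_{0,r-1} + 2\binom{3r-2}{2}\hat{\beta}_{-1,r-1} + (3r-2)\hat{\alpha}_{-1,r}$. You explicitly identified $\hat{\beta}_{-1,r-1}$ and $\hat{\alpha}_{-1,r}$ as the danger and sidestepped them, but they are harmless: $\hat{\beta}_{-1,r-1}=\betti{3r-4}{r-3}$ vanishes outright by the connectivity Theorem~\ref{smallest-thm}, and $\hat{\alpha}_{-1,r}$ vanishes by iterating the same 0-2-3-5 inequality down to degrees below zero. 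With these observations you get $\hat{\alpha}_{0,r}\le\hat{\beta}_{0,r-1}=1$ for $r\ge 3$, which is the constant you need, and your inductive step then goes through exactly as you wrote it.
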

  \begin{proof}
    For $k=0$, we have that $\hat{\beta}_{0,r} = 1$ for all 
    $r \ge 2$;
    use Theorem~\ref{bouctor-thm}. It is known that
    $\hat{\alpha}_{0,2} \le 1$ \cite[Th. 11.20]{thesis}; indeed, it is
    not hard to prove that
    $\tilde{H}_1(\M[7] \setminus e;\Z) \cong \tilde{H}_1(\M[7]
    \setminus e;\Z_3) \cong \Z_3$. 
    Moreover, 
    Lemma~\ref{bettiineq-lem} implies that $1 = \hat{\beta}_{0,r} \le
    \hat{\alpha}_{0,r} \le \hat{\beta}_{0,r-1} = 1$ for $r \ge 3$.

    Assume that $k \ge 1$ and $r \ge k+3$.
    By Corollary~\ref{bettiineq-cor} and induction on $k$, we obtain
    that
    \begin{eqnarray*}
    \hat{\alpha}_{k,r} &\le& \hat{\beta}_{k,r-1} +
    2\mbox{$\binom{2k+3r-2}{2}$}f_{k-1}(r-1)
    + (2k+3r-2)g_{k-1}(r);\\
    \hat{\beta}_{k,r} &\le& \hat{\alpha}_{k,r} +
    f_{k-1}(r),
    \end{eqnarray*}
    where $f_{k-1}$  and $g_{k-1}(r)$ are polynomials with properties
    as in the theorem. 
    As a consequence, 
    \[
    \hat{\beta}_{k,r} - \hat{\beta}_{k,r-1} 
    \le 
    2\mbox{$\binom{2k+3r-2}{2}$}f_{k-1}(r-1)
    + (2k+3r-2)g_{k-1}(r) + f_{k-1}(r).
    \]
    Now, the right-hand side is of the form
    \[
    h_k(r) = (3r)^2 \cdot \frac{3^{k-1}r^{3k-3}}{(k-1)!} + \rho_k(r) 
    =  \frac{3^{k+1}r^{3k-1}}{(k-1)!} + \rho_k(r),
    \]
    where $\rho_k(r)$ is a polynomial of degree at most $3k-2$.
    Summing over $r$, we obtain that
    \[
    \hat{\beta}_{k,r} \le 
    \hat{\beta}_{k,k+2} +  \sum_{i=k+3}^r h_k(r).
    \]
    The right-hand side is a polynomial $f_k(r)$ in $r$ with
    dominating term
    \[
    \frac{3^{k+1}}{(k-1)!}\cdot \frac{r^{3k}}{3k}
    = 
    \frac{3^k r^{3k}}{k!}.
    \]
    Defining 
    \[
    g_k(r) = f_k(r-1) + 
    2\mbox{$\binom{2k+3r-2}{2}$}f_{k-1}(r-1)
    + (2k+3r-2)g_{k-1}(r),
    \]
    we obtain a bound on $\hat{\alpha}_{k,r}$ with similar properties,
    which concludes the proof.
  \end{proof}
  For $k \ge 1$, one may extend the theorem to all $r \ge 0$ by adding
  a sufficiently large constant to each of $f_k(r)$ and $g_k(r)$.

  Let us provide a more precise bound for the case $k=1$.
  \begin{theorem}
    We have that
    $\betti{3}{0} = 2$, 
    $\betti{6}{1} = 16$, $\betti{9}{2} = 50$, 
    $\betti{12}{3} = 56$, and
    \[
    \betti{3r+3}{r}
    \le \frac{6 r^3  + 9 r^2 + 5 r}{2} - 73
    \]
    for $r \ge 4$.
    \label{bettik1-thm}
  \end{theorem}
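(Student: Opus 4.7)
The plan is to separate the four small values from the asymptotic bound and to derive the latter by specializing Corollary~\ref{bettiineq-cor} to $k=1$ and telescoping.

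For the four listed small cases I would simply read them off Figure~\ref{matching-fig} via the universal coefficient theorem. Explicitly, $\tilde H_0(\M[3];\Z)\cong\Z^2$, $\tilde H_1(\M[6];\Z)\cong\Z^{16}$, $\tilde H_2(\M[9];\Z)\cong\Z_3^8\oplus\Z^{42}$, and $\tilde H_3(\M[12];\Z)\cong\Z_3^{56}$, which tensored with $\Z_3$ give the asserted dimensions $2$, $16$, $50$, and $56$. This initial step is essentially bookkeeping.

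For $r\ge 4$, I would set $k=1$ in Corollary~\ref{bettiineq-cor}. Since $2k+3r-2=3r$ and, as recorded in the proof of Theorem~\ref{betti-thm}, $\hat\beta_{0,s}=\hat\alpha_{0,s}=1$ for all $s\ge 2$, the two inequalities of the corollary collapse (for $r\ge 3$) to
\begin{align*}
\hat\alpha_{1,r}&\le \hat\beta_{1,r-1}+2\tbinom{3r}{2}+3r,\\
\hat\beta_{1,r}&\le \hat\alpha_{1,r}+1.
\end{align*}
Adding these gives the telescoping inequality
\[
\hat\beta_{1,r}-\hat\beta_{1,r-1}\le 1+2\tbinom{3r}{2}+3r = 1+9r^2.
\]

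Starting from the base $\hat\beta_{1,3}=\betti{12}{3}=56$ and summing $1+9i^2$ from $i=4$ to $r$ yields
\[
\hat\beta_{1,r}\le 56+(r-3)+\tfrac{3r(r+1)(2r+1)}{2}-126 = \tfrac{6r^3+9r^2+5r}{2}-73,
\]
which is the claimed bound. The manipulation $9\sum_{i=1}^3 i^2=126$ is what produces the curious constant $-73 = 56-126+(-3)$, so matching it is the one place where I would be careful; the rest is routine. I do not anticipate any real obstacle, since the only inputs are Corollary~\ref{bettiineq-cor}, the $k=0$ base cases already handled in the proof of Theorem~\ref{betti-thm}, and the value $\hat\beta_{1,3}=56$ from the table.
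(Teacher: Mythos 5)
Your proposal is correct and is essentially the paper's own proof: the paper likewise derives the recursion $\hat\beta_{1,r}\le\hat\beta_{1,r-1}+2\binom{3r}{2}+3r+1=\hat\beta_{1,r-1}+9r^2+1$ from Lemma~\ref{bettiineq-lem} (equivalently Corollary~\ref{bettiineq-cor} at $k=1$, using $\hat\beta_{0,s}=\hat\alpha_{0,s}=1$), and then telescopes from the tabulated value $\hat\beta_{1,3}=56$. The only thing the paper leaves implicit is exactly the summation you spelled out.
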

  \begin{proof}
    With notation as in the proof of 
    Theorem~\ref{betti-thm}, Lemma~\ref{bettiineq-lem} implies that
    \[
    \hat{\beta}_{1,r} \le \hat{\beta}_{1,r-1} +
    2\mbox{$\binom{3r}{2}$} + 3r + 1
    = \hat{\beta}_{1,r-1} + 9r^2+1.
    \]
    Figure~\ref{matching-fig} and a straightforward computation yield
    the theorem.
  \end{proof}
  The first few values on the bound in Theorem~\ref{bettik1-thm},
  starting with $r=4$, are 201, 
  427, 752, 1194, and 1771.

  The set of pairs $(n,d)$ corresponding to a
  given $k$ in Theorem~\ref{betti-thm} is of the form $\{ v+ rw : r
  \ge k+2\}$,
  where $v = (2k+1,k-1)$ and $w = (3,1)$.
  Choosing other vectors $v$ and $w$, we obtain other
  sequences of Betti numbers. In this more general situation, it might
  be of interest to study other fields than $\Z_3$. 
  For $w = (2,1)$ and any field, the growth is 
  at least exponential as soon as $v = (n_0,d_0)$ for some $n_0$ and
  $d_0$  satisfying $n_0 \ge 2d_0+3$. Namely, 
  over $\Q$, $\betti{n_0+2q}{d_0+q}$ is known to equal the
  number of self-conjugate standard Young tableaux of size 
  $n_0+2q$ with a Durfee square of size $n_0-2d_0-2$ \cite{Bouc}. One
  easily checks that the number of such tableaux grows at least
  exponentially when $q$ tends to infinity.
  Yet, if we were to pick a vector $w = (a,b)$ such that $a/b > 2$,
  then the rational homology would disappear for sufficiently
  large $q$; apply Theorem~\ref{bouc-thm}. 
  
  By Theorem~\ref{torsionallover-thm}, there is 
  $3$-torsion of rank at least $\binom{2k}{k}$ in
  the group $\tilde{H}_{k-1+r}(\M[2k+1+3r];\Z)$ for 
  $k \ge 0$ and $r \ge 4$.
  As a consequence, for $\Z_3$, the
  growth is at least exponential for every
  $(a,b)$ satisfying $2 \le a/b<3$.
  Namely, writing $k_0 = 3d_0-n_0+4$ and $\delta = 3b-a$ and
  assuming that $2 < a/b < 3$, we have that 
  \[
  \betti{n_0+aq}{d_0+bq}
  = 
  \hat{\beta}_{k_0 + q \delta,n_0-2d_0+q(a-2b)-3}
  \ge 
  \binom{2(k_0 + q \delta)}{k_0 + q \delta}
  \]
  as soon as $n_0-2d_0+q(a-2b) \ge 7$.

  Finally, let us consider $\Z_p$ for $p \neq 3$. By
  Theorem~\ref{p5-thm}, whenever
  $a/b > 5/2$, we have that $\betti{n_0+aq}{d_0 + bq}$ is zero over
  $\Z_p$ for sufficiently large $q$. The situation 
  remains unclear for $2 < a/b \le 5/2$.

  \section{Concluding remarks and open problems}
  \label{concluding-sec}

  From our viewpoint, the most important open problem regarding the
  homology of $\M[n]$ is whether there
  exists other torsion than $3$-torsion for $n \neq 14$.
  In light of the discussion in Section~\ref{further5-sec}, 
  we are tempted to conjecture the following:
  \begin{conjecture}
    $\tilde{H}_{d}(\M[n];\Z) = \tilde{H}_{k-1+r}(\M[2k+3r+1];\Z)$
    contains nonvanishing $5$-torsion whenever
    \[
    \left\lceil\frac{2n-8}{5}\right\rceil \le d \le 
    \left\lfloor\frac{n-6}{2}\right\rfloor
    \Longleftrightarrow
    3 \le r \le k+1.
    \]
  \end{conjecture}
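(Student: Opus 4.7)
The conjecture predicts 5-torsion throughout the two-dimensional region $\{(k, r) : 3 \le r \le k + 1\}$ in the parameter space of (\ref{nd2kr-eq}). Via Corollary~\ref{further5-cor}, the subregion $4 \le r \le k+1$ reduces to the subclaim that $\tilde{H}_{2q}(\M[5q+4];\Z)$ contains nonvanishing 5-torsion for every $q \ge 3$ — the top boundary diagonal $(k, r) = (q, q+1)$. The remaining line $r = 3$ must be handled separately starting from $\M[14]$: the propagation of Theorem~\ref{further5-thm}'s proof breaks at $q = 2$ because the reduced cycle $\gamma'$ then lives in $\tilde{H}_2(\M[9];\Z)$, which carries only 3-torsion. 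My plan is therefore three-pronged: (a) establish the diagonal subclaim inductively on $q \ge 3$; (b) invoke Corollary~\ref{further5-cor} to populate $4 \le r \le k+1$; (c) handle the $r = 3$ line by a separate propagation from $\M[14]$.

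For part (a), I would use the 0-3-5-6 sequence of Theorem~\ref{exseq0356-thm} at $n = 5q + 4$ to present $\tilde{H}_{2q}(\M[5q+4];\Z_5)$ as the cokernel of $\psi^* : R_{2q-2}^{5q-2} \to P_{2q-2}^{5q-1} \oplus Q_{2q-1}^{5q+1}$ (the tail term $R_{2q-3}^{5q-2}$ vanishes over $\Z_5$ by Theorem~\ref{p5-thm} since its parameter $(k,r) = (q-3, q+1)$ lies outside the nonvanishing region). By Theorem~\ref{p5-thm} again, the $Q_{2q-1}^{5q+1}$ summand is built from $\tilde{H}_{2q-1}(\M[5q+1];\Z_5)$, sitting at $(k, r) = (q, q)$ and generated by cycles of type $\brackom{1}{0} \wedge \brackom{5}{2}^q$. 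The idea is to lift an inductively produced 5-torsion class $z$ in $\tilde{H}_{2q-1}(\M[5q+1];\Z)$ to $(1c - 23) \wedge z \in \tilde{H}_{2q}(\M[5q+4];\Z)$ via $\iota^* \circ \varphi^*$, and verify that the lift is not killed by the image of $\psi^*$. This feeds back into task (c), because for $q = 3$ the required class $z$ lives in $\tilde{H}_5(\M[16];\Z)$, which is precisely the first new point of the $r = 3$ line. For part (c), I would invoke Theorem~\ref{torallovergen-thm} directly with a cycle $\gamma \in \tilde{H}_4(\M[14];\Z)$ of exponent 5 and the compatible family $\{K_{k+1, k}\}_{k \ge 2}$ from Proposition~\ref{compatible-prop}(2); the critical verification is that $\theta_2 \tensor \iota_5$ is a monomorphism, i.e.\ that some 5-torsion class in $\tilde{H}_4(\M[14];\Z)$ admits a representative of the form $z \wedge \gamma_9$ with $z$ the fundamental cycle of $\M(K_{3, 2})$ on 5 of the 14 vertices and $\gamma_9 \in \tilde{H}_2(\M[9];\Z)$.

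The principal obstacle is that the long exact sequences of the paper propagate torsion but do not create it, so the inductive approach to (a) needs a genuinely new second base case beyond $\M[14]$ — most naturally $\tilde{H}_5(\M[16];\Z)$ or $\tilde{H}_6(\M[19];\Z)$ — and parts (a) and (c) are in fact entangled through the shared appearance of $\tilde{H}_5(\M[16];\Z)$, forcing them to be executed in lockstep. The hardest step will be showing that $\psi^*$ is not surjective modulo 5 onto the 5-torsion part of $P_{2q-2}^{5q-1} \oplus Q_{2q-1}^{5q+1}$; this requires either a conceptual handle on the connecting map of the 0-3-5-6 sequence at the prime 5 or, failing that, a computer-assisted verification of one additional 5-torsion base case from which the coupled inductions of (a) and (c) can be launched.
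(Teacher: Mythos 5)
The statement you are trying to prove is a \emph{conjecture} in the paper, not a theorem: the paper offers no proof, only a discussion reducing it to other open statements. It notes that Corollary~\ref{further5-cor} would settle the subregion $4 \le r \le k+1$ provided $\tilde{H}_{2q}(\M[5q+4];\Z)$ were known to contain $5$-torsion for every $q \ge 3$, and that the remaining $r = 3$ line would follow from Conjecture~\ref{splitexact-conj}, whose split-exactness would carry $p$-torsion along $(n,d)\mapsto(n+2,d+1)$ starting from the known $5$-torsion in $\tilde{H}_4(\M[14];\Z)$. Your overall decomposition into a diagonal base claim, a propagation step via Corollary~\ref{further5-cor}, and a separate $r=3$ strand therefore matches the paper's framing, and you are right that the diagonal claim and the $r=3$ line are coupled through $\tilde{H}_5(\M[16];\Z)$.

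There is, however, a concrete error in your part (c). To apply Theorem~\ref{torallovergen-thm} with target $\tilde{H}_{k+2}(\M[2k+10];\Z)$, $k_0 = 2$, and $G_k = K_{k+1,k}$, the auxiliary cycle $\gamma$ must lie in $\tilde{H}_{d-1}(\M[n];\Z)$ with $d = 3$, $n = 9$, i.e.\ in $\tilde{H}_2(\M[9];\Z)$. The clause of that theorem that actually produces $p$-torsion requires $\gamma$ to have exponent $5$, but $\tilde{H}_2(\M[9];\Z) \cong \Z_3^8 \oplus \Z^{42}$ contains no $5$-torsion, so no such $\gamma$ exists. The weaker first clause only gives a monomorphism after tensoring with $\Z_5$, which for $k \ge 3$ merely yields $\tilde{H}_{k+2}(\M[2k+10];\Z) \otimes \Z_5 \neq 0$; since those groups are already infinite (Theorem~\ref{bouc-thm}, as $k \ge \binom{3}{2} = 3$), this detects nothing about torsion. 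You in fact diagnosed this exact obstruction when explaining why Theorem~\ref{further5-thm}'s proof ``breaks at $q=2$ because $\gamma'$ then lives in $\tilde{H}_2(\M[9];\Z)$, which carries only $3$-torsion'' --- your part (c) simply re-encounters the same wall. The paper's proposed escape (Conjecture~\ref{splitexact-conj}) is structurally different: rather than wedging with a fixed small cycle of exponent $5$, it would make $\tilde{H}_{d-1}(\M[n-2];\Z)$ a direct summand of $\tilde{H}_d(\M[n];\Z)$, so the $5$-torsion in $\tilde{H}_4(\M[14];\Z)$ would lift up the $r=3$ line directly, with no exponent hypothesis on a factor living in $\M[9]$.

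The gaps you acknowledge in part (a) --- that the exact sequences propagate but do not create torsion, and that an induction on the diagonal would require at least one further unverified base case such as $\tilde{H}_5(\M[16];\Z)$ --- are real and unresolved; they are precisely why the paper leaves this statement as a conjecture rather than proving it.
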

  The bounds are exactly the same as in 
  Corollary~\ref{further5-cor}, except that 
  the upper bound in the corollary is
  $\lfloor\frac{n-7}{2}\rfloor$ rather than
  $\lfloor\frac{n-6}{2}\rfloor$. 
  In fact, the conjecture would be true for $d =
  \frac{n-6}{2}$ and all even $n \ge 14$ if the following conjecture
  were true: 

  \begin{figure}[htb]
    \begin{center}
        \begin{tabular}{|r||c|c|c|c|c|c|}
          \hline
          $\tilde{H}_i(\M[n]\setminus e;\Z)$
          &  $i=-1$ & \ 0 \ & \ 1 \ & \ 2 \ & \ 3 \ & \ 4 \
          \\
          \hline
          \hline
          $n = 2$ & $\Z$ & - & - & - & - & - \\
          \hline
          $3$ & - & $\Z$ & - & - & - & -  \\
          \hline
          $4$ & - & $\Z^2$ & - & - & - & -  \\
          \hline
          $5$ & - & - & $\Z^4$ & - & - & -  \\
          \hline
          $6$ & - & - & $\Z^{14}$ & - & - & - \\
          \hline
          $7$ & - & - & $\Z_3$ & $\Z^{14}$ & - & -  \\
          \hline
          $8$ & - & - & - & $\Z^{116}$ & - & -  \\
          \hline
          $9$  & - & - & - & $\Z_3^7 \oplus \Z^{42}$ &
          $\Z^{50}$ - & -  \\
          \hline
          $10$ & - & - & - & $\Z_3$ &
          $\Z^{1084}$ - & -  \\
          \hline
          $11$  & - & - & - & - & $\Z_3^{37} \oplus
          \Z^{1146}$ & $\Z^{182}$  \\
          \hline
        \end{tabular}
    \end{center}
    \caption{The homology of $\M[n] \setminus e$ for $n \le 11$.} 
    \label{matching2-fig}
  \end{figure}

  \begin{conjecture}
    The sequence
    \[
    \begin{CD}
      0 \longrightarrow
      \tilde{H}_{d}(\M[n]\setminus e; \Z) \longrightarrow
      \tilde{H}_{d}(\M[n]; \Z) \longrightarrow
      \langle e \rangle \tensor \tilde{H}_{d-1}(\M[{[n]\setminus e}]; \Z)  
      \longrightarrow 0,
    \end{CD}
    \]
    cut from the long exact 0-$e$-2 sequence in
    Section~\ref{exseq0e2-sec}, is split exact for every $n \ge 3$ and
    every $d$.
    \label{splitexact-conj}
  \end{conjecture}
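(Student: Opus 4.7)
The plan is in two steps; fix $e=\{1,2\}$ for concreteness and write $\iota\colon \M[{[n]\setminus e}]\hookrightarrow \M[n]\setminus e$ for the inclusion. Unwinding the long exact $0$-$e$-$2$ sequence of Theorem~\ref{exseq0e2-thm}, the conjectured short sequence is exact in every degree if and only if the connecting map $\langle e\rangle\tensor\tilde{H}_d(\M[{[n]\setminus e}];\Z)\to\tilde{H}_d(\M[n]\setminus e;\Z)$ vanishes for every $d$; a direct unwinding of the connecting homomorphism (lift $e\wedge z$ to $\tilde{C}_{d+1}(\M[n])$, take $\partial$, and note $\partial(e\wedge z)=z$) shows this map coincides with $\iota_*$, so the task is to prove $\iota_*=0$ for every $d$. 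The plan is to establish this by producing an explicit chain homotopy $\phi\colon \tilde{C}_d(\M[{[n]\setminus e}])\to\tilde{C}_{d+1}(\M[n]\setminus e)$ with $\partial\phi+\phi\partial=\iota$. The natural template is a ``cone'' construction: for a matching $\sigma$ of $[3,n]$ with unmatched vertex set $U(\sigma)\subseteq [3,n]$, take $\phi(\sigma)$ to be a signed sum of faces $\{1,a\}\wedge\sigma$ for $a\in U(\sigma)$, possibly symmetrised with the analogous terms using vertex $2$. Since no single $a$ can serve uniformly---if $n$ is even, $\sigma$ may be a perfect matching of $[3,n]$ so that $U(\sigma)=\emptyset$---the sum must genuinely average over $U(\sigma)$.

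Once such a $\phi$ is in hand the splitting follows automatically. For each cycle $z\in\tilde{C}_{d-1}(\M[{[n]\setminus e}])$ define
\[
s(e\tensor [z]) := [e\wedge z - \phi(z)]\in \tilde{H}_d(\M[n];\Z).
\]
Since $e$ is a $0$-cell one has $\partial(e\wedge z)=z-e\wedge\partial z=z$, and the chain-homotopy identity applied to the cycle $z$ gives $\partial\phi(z)=\iota(z)=z$; hence $\partial(e\wedge z-\phi(z))=0$ and $s(e\tensor[z])$ is a genuine class in $\tilde{H}_d(\M[n];\Z)$. If $z=\partial z'$, then parallel computations using $\phi(\partial z')=z'-\partial\phi(z')$ and $e\wedge\partial z'=z'-\partial(e\wedge z')$ collapse everything to $\partial(\phi(z')-e\wedge z')$, showing that $s$ descends to a well-defined homomorphism on homology. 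By construction $\omega\circ s=\mathrm{id}$, where $\omega$ is the projection of Theorem~\ref{exseq0e2-thm}, so $s$ is the desired section.

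The substantive obstacle is the construction of $\phi$. The trouble is that $U(\sigma)$ changes when one passes to a face: deleting an edge $e_i\in\sigma$ enlarges $U$ by its two endpoints, so a naive averaged cone produces cross-terms in $\partial\phi+\phi\partial$ that do not telescope. Two routes seem worth pursuing. First, one may try to exhibit a discrete Morse matching (in the sense of Forman) on $\M[n]\setminus e$ whose critical cells all lie outside the subcomplex $\M[{[n]\setminus e}]$; the resulting Morse flow would both certify $\iota_*=0$ and supply $\phi$ explicitly. Second, one may attempt an induction on $n$, combining the $0$-$1$-$2$, $0$-$e$-$2$, and $0$-$2$-$3$-$5$ sequences of Theorems~\ref{exseq012-thm}, \ref{exseq0e2-thm}, and \ref{exseq0235-thm} to reduce the claim for $n$ to the analogous claim for smaller values of $n$; the agreement of Figures~\ref{matching-fig} and~\ref{matching2-fig} with the conjecture in all computed cases makes such an inductive scheme appear feasible.
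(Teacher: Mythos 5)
This statement is a conjecture, not a theorem: the paper does not prove it. It is posed explicitly as an open problem, with verification only by computer for $n\le 11$ (the agreement of Figure~\ref{matching-fig} and Figure~\ref{matching2-fig}), and the surrounding text only derives conditional consequences of its hypothetical truth (e.g.\ the discussion before Proposition~\ref{betticonj-prop}). So there is no proof in the paper to measure your attempt against.

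Your proposal is honest that it is a reduction with a gap rather than a proof, and the reduction itself is carried out correctly. You rightly identify the connecting homomorphism of the pair $(\M[n],\M[n]\setminus e)$ with the map on homology induced by the inclusion $\iota\colon\M[{[n]\setminus e}]\hookrightarrow\M[n]\setminus e$, so exactness of the short sequences at both ends for all $d$ is equivalent to $\iota_*=0$ in all degrees; and your check that a chain nullhomotopy $\partial\phi+\phi\partial=\iota$ produces a well-defined section $s(e\tensor[z])=[e\wedge z-\phi(z)]$ of $\omega$ is sound. However, the construction of $\phi$ is essentially the whole content of the conjecture, and you correctly name it as the substantive obstacle. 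Two further cautions. First, asking for a chain-level nullhomotopy is strictly stronger than what the conjecture asserts: over $\Z$, a chain map of bounded-below complexes of free abelian groups can induce the zero map on all homology groups without being nullhomotopic (small examples built from $\Z\xrightarrow{\,2\,}\Z$ already show this), so a discrete Morse search or averaged cone could fail to exist even if the conjecture is true. Second, the inductive route you suggest via the $0$-$1$-$2$, $0$-$e$-$2$, and $0$-$2$-$3$-$5$ sequences yields rank inequalities --- indeed that is precisely how the paper establishes the polynomial bounds of Theorem~\ref{betti-thm} --- but inequalities on Betti numbers do not give the map-level information needed to produce a splitting, which is why the paper had to fall back on computer checks. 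Your reduction is a useful reformulation, but the conjecture remains open both for you and for the paper.
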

  We have checked the conjecture up to $n=11$ using
  computer; see  
  Figure~\ref{matching2-fig} and compare to Figure~\ref{matching-fig}.
  If Conjecture~\ref{splitexact-conj} were true for all $n$, then
  we would have $p$-torsion in $\tilde{H}_{d+k}(\M[n+2k];\Z)$ 
  for all $k \ge 0$ whenever $\tilde{H}_{d}(\M[n];\Z)$ contains 
  $p$-torsion.

  Define $\hat{\beta}_{k,r} = \dim_{\Z_3}
  \tilde{H}_{k-1+r}(\M[2k+1+3r];\Z_3)$.
  Conjecture~\ref{splitexact-conj} being true for the coefficient ring
  $\Z_3$ would imply that $\hat{\beta}_{k-1,r} \le
  \hat{\beta}_{k,r}$. Combined with a quite modest conjecture about 
  the behavior of $\{\hat{\beta}_{k,r} : r \ge 1\}$ for each fixed
  $k$, this would yield nontrivial lower bounds on $\hat{\beta}_{k,r}$
  for every $k,r \ge 0$:
  \begin{proposition}
    Suppose that $\hat{\beta}_{k-1,r} \le \hat{\beta}_{k,r}$ for 
    all $k \ge 1$ and $r \ge 0$. Suppose further that there are
    positive numbers  
    $\{C_k : k \ge 0\}$ such that 
    $C_k\hat{\beta}_{k,r} \ge \hat{\beta}_{k,r-1}$ for 
    all $k \ge 0$ and $r \ge 1$. Then
    $\hat{\beta}_{k,r}$ is bounded from below by a polynomial of
    degree $k$.
    \label{betticonj-prop}
  \end{proposition}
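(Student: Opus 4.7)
The plan is by induction on $k$. The base case $k = 0$ follows from Theorem~\ref{bouctor-thm}, which yields $\hat{\beta}_{0,r} = 1$ for all $r \ge 2$, a constant and hence a polynomial of degree zero. For the inductive step, assume a polynomial $p_{k-1}$ of degree $k-1$ satisfies $\hat{\beta}_{k-1,r} \ge p_{k-1}(r)$ for all $r$ sufficiently large, and seek a polynomial $p_k$ of degree $k$ with $\hat{\beta}_{k,r} \ge p_k(r)$.

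A natural candidate is $p_k(r) = c_k \binom{r+k-1}{k}$, a polynomial of degree $k$ in $r$ satisfying the Pascal recurrence $p_k(r) = p_k(r-1) + c_k \binom{r+k-2}{k-1}$. I would run a secondary induction on $r$. For the inductive step on $r$, hypothesis (b) gives $\hat{\beta}_{k,r} \ge \hat{\beta}_{k,r-1}/C_k \ge p_k(r-1)/C_k$, which (after absorbing the factor $C_k$ into the definition of $c_k$) accounts for the $p_k(r-1)$ term of the Pascal recurrence. Hypothesis (a) independently gives $\hat{\beta}_{k,r} \ge \hat{\beta}_{k-1,r} \ge p_{k-1}(r)$, which matches the desired $c_k \binom{r+k-2}{k-1}$ correction in degree. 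The idea is to choose $c_k$ small enough, and the threshold $r_k$ large enough, that these two contributions (each propagated inductively in its own direction) combine to supply the full target $p_k(r)$.

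The chief obstacle is that hypotheses (a) and (b) furnish two separate lower bounds on $\hat{\beta}_{k,r}$, so a priori one can only retain their maximum, not their sum. Closing the gap requires a careful asymptotic analysis: one needs to verify that for $r$ larger than some $r_k$ the larger of the two individual bounds already exceeds the target $c_k \binom{r+k-1}{k}$, once $c_k$ is chosen sufficiently small relative to $c_{k-1}$ and $C_k$. To get the secondary induction on $r$ off the ground, one can anchor it using Theorem~\ref{torsionallover-thm}, which provides the concrete lower bound $\hat{\beta}_{k,4} \ge \binom{2k}{k}$ at $r = 4$ and thereby fixes the base case of the $r$-induction at a favorable position. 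The main technical work is the simultaneous calibration of the coefficients $c_k$ and thresholds $r_k$ so that the double induction on $k$ and $r$ closes.
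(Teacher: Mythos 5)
Your proposed double induction cannot be made to close, because hypotheses (a) and (b) together with a fixed anchor like $\hat{\beta}_{k,4} \ge \binom{2k}{k}$ are genuinely insufficient to force polynomial growth in $r$. Consider the hypothetical sequence $\hat{\beta}_{k,r} = \binom{2k}{k}$ for all $r$: it satisfies (a), since $\binom{2k}{k}/\binom{2k-2}{k-1} = 4 - 2/k > 1$; it satisfies (b) with $C_k = 1$; it matches the anchor at $r=4$ and the base values $\hat{\beta}_{0,r}=1$; yet it is constant in $r$, hence not bounded below by any polynomial of degree $k \ge 1$. You correctly identify the obstacle — (a) and (b) furnish two lower bounds whose maximum, not sum, survives — but no calibration of the coefficients $c_k$ and thresholds $r_k$ can fix a situation where neither bound grows in $r$. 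Concretely, with target $p_k(r) = c_k\binom{r+k-1}{k}$, the $r$-inductive step would need $\hat{\beta}_{k,r-1}/C_k \ge p_k(r)$, which reduces to $(r-1)/(r+k-1) \ge C_k$, false for all $r$ whenever $C_k \ge 1$; and the alternative bound $\hat{\beta}_{k-1,r} \ge p_{k-1}(r)$ contributes only degree $k-1$.

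What the paper's proof uses, and what your argument is missing, is an additional \emph{unconditional} inequality coming from the long exact 0-1-2 sequence of Theorem~\ref{exseq012-thm}:
\[
(2k+3r)\hat{\beta}_{k-1,r-1} \le \hat{\beta}_{k,r-1} + \hat{\beta}_{k-2,r}.
\]
Now (b) bounds $\hat{\beta}_{k,r-1} \le C_k\hat{\beta}_{k,r}$ and (a), applied twice, bounds $\hat{\beta}_{k-2,r} \le \hat{\beta}_{k,r}$, giving
\[
\hat{\beta}_{k,r} \ge (C_k+1)^{-1}(2k+3r)\,\hat{\beta}_{k-1,r-1}.
\]
The linear factor $2k+3r$ is precisely the engine of polynomial growth that (a) and (b) cannot supply on their own; iterating this inequality $k$ times down to $\hat{\beta}_{0,r-k}=1$ immediately produces a degree-$k$ polynomial lower bound. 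The roles of (a) and (b) in the paper are thus not to generate growth but to absorb the error terms $\hat{\beta}_{k,r-1}$ and $\hat{\beta}_{k-2,r}$ into a single $\hat{\beta}_{k,r}$ on the right-hand side.
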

  \begin{proof}
    By the long exact 0-1-2 sequence, we have that
    \[
    (2k+3r)\hat{\beta}_{k-1,r-1} \le 
    \hat{\beta}_{k,r-1} + \hat{\beta}_{k-2,r}
    \]
    for $r \ge 1$.
    Applying our assumptions, we obtain that 
    \[
    (2k+3r)\hat{\beta}_{k-1,r-1} \le 
    C_k \hat{\beta}_{k,r} + \hat{\beta}_{k,r}
    = (C_k + 1)\hat{\beta}_{k,r},
    \]
    which yields that
    $\hat{\beta}_{k,r} \ge (C_k +
    1)^{-1}(2k+3r)\hat{\beta}_{k-1,r-1}$.
  \end{proof}
  For $k \le 2$, $\hat{\beta}_{k,r}$ is indeed bounded from below by a
  polynomial of degree $k$ \cite{ShWa}.

  \begin{figure}[htb]
    \begin{center}
        \begin{tabular}{|r||l|l|l|l|l|l|}
          \hline
          Exponents
          &  $k=0$ & \ 1 \ & \ 2 \ & \ 3 \ & \ 4 \ & \ 5 \
          \\
          \hline
          \hline
          $r = 0$ & $\infty$ & $\infty$ & $\infty$ & $\infty$ &
          $\infty$ & $\infty$ \\
          \hline
          $1$ & $\infty$ & $\infty$ & $\infty$ & $\infty$ &
          $\infty$ &  $\infty$ \\ 
          \hline
          $2$ & $3$ & $\infty, 3$ & $\infty, 3$ & $\infty, 3^*, e?$ &
          $\infty, e?$ & $\infty, e?$ \\  
          \hline
          $3$  & $3$ & $3$ & $3^*, 5^*, e?$ &
           $\infty, 3^*, e?$ & $\infty, 3^*, e?$ & $\infty, 3^*, e?$ \\
          \hline
          $4$  & $3$ & $3$ & $3$ &
          $3^*, e?$ & $3^*, e?$ & $3^*, e?$ \\
          \hline
          $5$  & $3$ & $3$ & $3$ &
          $3^*$ & $3^*, e?$ & $3^*, e?$ \\
          \hline
          $6$  & $3$ & $3$ & $3$ &
          $3^*$ & $3^*$ & $3^*, e?$ \\
          \hline
          $7$  & $3$ & $3$ & $3$ &
          $3^*$ & $3^*$ & $3^*$ \\
          \hline
        \end{tabular}
    \end{center}
    \caption{List of all possible infinite and prime power exponents
      of elements in $\tilde{H}_{k-1+r}(\M[2k+1+3r];\Z)$ for 
      $k \le 5$ and $r \le 7$. Legend: $\infty$ = 
      infinite exponent; $p^*$ = exponent an unknown positive power of
      $p$; $e?$ = possibly other prime power exponents than those
      listed.}
   \label{matchingkr2-fig}
  \end{figure}

  We conclude with Table~\ref{matchingkr2-fig}, which provides a list
  of possible exponents in $\tilde{H}_{k-1+r}(\M[2k+1+3r];\Z)$
  for small $k$ and $r$; apply Theorems~\ref{bouc-thm},
  \ref{ShWa-thm}, \ref{m14-thm}, \ref{torsionallover-thm},
  and \ref{p5-thm} and Proposition~\ref{m13-prop}. Note that 
  $(k,r) = (0,2)$ yields the first occurrence of $3$-torsion 
  and that $(k,r) = (2,3)$ yields the only known occurrence of
  $5$-torsion. These two pairs share the property that $k$ is
  maximal for the given $r$ such that the group at $(k,r)$ is finite. 
  Speculating wildly, one may ask whether there is further torsion to
  discover at other pairs $(k,r)$ with this property, that is, $k = 
  \binom{r}{2}-1$; use Theorem~\ref{bouc-thm}.

  \section*{Acknowledgments}

  I thank two anonymous referees for several useful comments.
  This research was carried out at the Technische Universit\"at Berlin
  and at the Massachusetts Institute of Technology in Cambridge, MA.

  \end{document}